\newtheorem{theorem}{Theorem}[section]
\newtheorem{proposition}[theorem]{Proposition}
\newtheorem{lemma}[theorem]{Lemma}
\newtheorem{definition}[theorem]{Definition}
\newtheorem{assumption}[theorem]{Assumption}
\newtheorem{example}[theorem]{Example}
\theoremstyle{remark}
\newtheorem{remark}[theorem]{Remark}
\crefname{theorem}{Theorem}{Theorems}
\Crefname{theorem}{Theorem}{Theorems}
\crefname{assumption}{Assumption}{Assumptions}
\Crefname{assumption}{Assumption}{Assumptions}
\crefname{lemma}{Lemma}{Lemmas}
\Crefname{lemma}{Lemma}{Lemmas}
\crefname{definition}{Definition}{Definitions}
\Crefname{definition}{Definition}{Definitions}
\crefname{proposition}{Proposition}{Propositions}
\Crefname{proposition}{Proposition}{Propositions}
\crefname{algorithm}{Algorithm}{Algorithms}
\Crefname{algorithm}{Algorithm}{Algorithms}
\crefname{section}{Section}{Sections}
\Crefname{section}{Section}{Sections}
\crefname{appendix}{Appendix}{Appendices}
\Crefname{appendix}{Appendix}{Appendices}
\crefname{corollary}{Corollary}{Corollaries}
\Crefname{corollary}{Corollary}{Corollaries}
\crefname{example}{Example}{Examples}
\Crefname{example}{Example}{Examples}
\definecolor{lccx}{HTML}{92268F}
\DeclareMathOperator*{\argmin}{arg\,min}
\newcommand{\N}{\mathbb{N}}
\newcommand{\R}{\mathbb{R}}
\newcommand{\Z}{\mathbb{Z}}
\newcommand{\Ha}{\mathcal{H}}
\newcommand{\conv}{\operatorname{conv}}
\DeclareMathOperator*{\TV}{TV}
\DeclareMathOperator*{\TVh}{TV^h}
\DeclareMathOperator*{\BV}{BV}
\DeclareMathOperator*{\BVW}{BV_W}
\DeclareMathOperator*{\dvg}{div}
\newcommand*\dd{\mathop{}\!\mathrm{d}}
\newcommand{\weakstarto}{\stackrel{\ast}{\rightharpoonup}}
\newcommand{\eps}{\varepsilon}
\newcommand{\QQ}{\mathcal{Q}}
\newcommand{\EE}{\mathcal{E}}
\newcommand{\Scal}{\mathcal{S}}
\renewcommand{\div}{\dvg}
\title{Discretization of Total Variation in Optimization with Integrality Constraints\thanks{
		The authors acknowledge funding by Deutsche
		Forschungsgemeinschaft (DFG) under grant nos.\ 
		MA 10080/2-1 and ME 3281/10-1.}}
\author{Annika Schiemann\thanks{Faculty of Mathematics,
		TU Dortmund University (\url{annika.schiemann@tu-dortmund.de}, \url{paul.manns@tu-dortmund.de}).}
	\and 
	Paul Manns\footnotemark[2]}
\begin{document}
	\maketitle
	
	\begin{abstract}
		We introduce discretizations of infinite-dimensional optimization problems with total variation regularization and integrality constraints on the optimization variables. We advance the discretization of the dual formulation of the total variation term with Raviart--Thomas functions which is known from literature for certain convex problems. Since we have an integrality constraint, the previous analysis from Caillaud and Chambolle
		\cite{Caillaud2020Error} does not hold anymore. Even weaker $\Gamma$-convergence results do not hold anymore because the recovery sequences generally need to attain non-integer values to recover the total variation of the limit function. We solve this issue by introducing a discretization of the input functions on an embedded, finer mesh.
		A superlinear coupling of
		the mesh sizes implies
		an averaging on the coarser mesh of the Raviart--Thomas ansatz, which enables to recover the total variation of integer-valued limit functions with integer-valued discretized input functions.
		Moreover, we are able to estimate the discretized total variation of the recovery sequence by the total variation of its limit and an error depending on the mesh size ratio. 
		For the discretized optimization problems, we additionally add a constraint that vanishes in the limit and enforces compactness of the sequence of minimizers, which yields their convergence to a minimizer of the original problem. This constraint contains a degree of freedom whose admissible range is determined. Its choice may have a strong impact on the solutions in practice as we demonstrate with an example from imaging.
	\end{abstract}
	
\noindent \textbf{Keywords:}
		total variation; 
		infinite-dimensional
		optimization with integrality
		restrictions;
		coupled discretizations; Raviart--Thomas elements

\noindent \textbf{AMS subject classification:}
		65D18;65K10;49M25
	
	\section{Introduction}
	Let $\Omega \subset \R^d$, $d \in \{1,2,3\}$,
	be a bounded Lipschitz domain, which
	is a finite union of bounded intervals or axis-aligned squares or cubes. We consider discretizations of the optimization problem
	\begin{gather}\label{eq:p}
		\begin{aligned}
			\min_{w \in L^1(\Omega)}\ & F(w) + \alpha \TV(w)\\
			\text{s.t.}\quad & w(x) \in W \subset \Z
			\text{ for almost all (a.a.) } x \in \Omega,
		\end{aligned}\tag{P}
	\end{gather}
	where $W \subset \Z$ is a finite set of integers, $\alpha >0$, $\TV: L^1(\Omega) \to [0,\infty]$ is the total variation, and $F: L^1(\Omega) \to \R$ is continuous and bounded from below.
	For example, $F$ may contain a solution operator to a PDE as in \cite{manns2023integer}, where a trust-region algorithm for integer optimal control problems with total variation regularization was analyzed. Hence, the results of this work can be used to discretize the function space subproblems arising in \cite{manns2023integer}. Integer optimal control problems with total variation regularization have many applications \cite{Gottlich2017Partial,Gottlich2019Partial,Gerdts2005Solving,Hante2017Challenges}.
	Other problems related to \eqref{eq:p} are investigated in the context of image denoising, deblurring \cite{Rudin1992Nonlinear,chambolle1997image,Oliveira2009Adapted,Destuynder2007Dual}, and segmentation \cite{Pock2009convex}. This is because the total variation allows but penalizes jumps of the input function.
	
	One approach, in particular
	for $W = \{0,1\}$, to handle
	the total variation
	in combination with the
	integrality condition
	would be to use a Modica--Mortola energy functional 
	\cite{modica1987gradient}
	and
	relax the
	integrality condition.
	Then, one drives a parameter to zero that
	controls the non-binarity and may recover the total variation in the limit.
	In this case, the difficulty
	of the integrality is replaced by the non-convexity of the 
	Modica--Mortola energy, however.
	
	Therefore, the aim of this paper is to discretize \eqref{eq:p} such that we retain the integrality condition on the input functions while being able to recover the total variation of solutions to \eqref{eq:p}. This is
	challenging since the underlying mesh prescribes the geometry of the discretized input functions.
	Many discretizations of the total variation term can already be found in literature but
	focus rarely on  restrictions to integers, see \cite{chambolle2021approximating} for an overview.
	Approaches that rely on the primal formulation of the total variation are finite-differences discretizations and variants \cite{Wang2011Error,Lai2012Piecewise,Chambolle2011Upwind,Abergel2017Shannon}
	as were introduced for Euler's elastica energy and generalized $\TV$-Stokes models in \cite{Tai2011Fast,Hahn2011Augmented}, $P_1$ discretizations \cite{Bartels2012Total,Bartels2014Discrete}, and non-conforming discretizations with Crouzeix--Raviart functions \cite{Chambolle2020Crouzeix}. A discretization of the dual formulation by discretizing the dual fields with Raviart--Thomas functions is presented in \cite{Caillaud2020Error}. Related approaches
	based on discretized dual fields
	are \cite{Condat2017Discrete,Hintermuller2014Functional,Chambolle2020Learning}. While a $P_1$ or Crouzeix--Raviart input function ansatz
	does not permit the integrality condition, the other approaches allow for piecewise constant ansatz functions. However, if there are $\Gamma$-convergence results for the discretized total variation, the recovery sequences generally need to attain non-integer values to recover the total variation of the limit functions, even if the limit functions themselves are integer-valued, see the proofs of Theorem 4 in \cite{Chambolle2017Accelerated} and Theorem 1.2 in \cite{Caillaud2020Error}. The latter proves an error rate for the discretized optimization problems by means of strong duality of the continuous problems which does
	not hold in our integer setting.
	
	We will solve this issue by introducing two coupled discretizations with a fine mesh for the input functions that is embedded into a coarser mesh for the total variation term. A superlinear coupling of the fine mesh size to the coarser one allows to consider integer-valued and piecewise constant discretized input functions and still be able to recover the total variation of limit functions due to an averaging effect on the coarser mesh. This makes it possible to prove convergence of the discretized total variation in the sense of $\Gamma$-convergence despite the restriction to integrality. Moreover, we are able to prove an error bound for the discretized total variation of the recovery sequence with respect to the total variation of its limit.
	
	The discretization of the considered optimization problems is accompanied with the loss of compactness of the sequence of minimizers, which is due to possible chattering enabled by the enlarged null space of the discretized total variation caused by the coupled meshes. We compensate for this lack by adding a constraint to the discretized problems that enforces compactness on the sequence of minimizers and vanishes in the limit. This constraint contains a degree of freedom whose admissible range we determine. Even though its concrete choice is irrelevant when the mesh sizes are driven to zero, it may impact on the solutions of the discretized problems in practice.
	Together with the approximation properties of the discretized total variation, we prove the convergence of minimizers of the discretized problems to a minimizer of the original problem.
	To solve the discretized problems, we will introduce an outer approximation algorithm, which we apply to an example from imaging.
	
	Structure of the remainder: we start by introducing the discretized total variation and analyzing its approximation properties in \cref{sec:Preliminary}. \Cref{sec:Gamma-convergence} is dedicated to the discretization of the considered optimization problems and the convergence of their minimizers to a minimizer of the original problem. In \cref{sec:OA}, we state and analyze an outer-approximation algorithm to solve the discretized problems. We present our numerical experiments and results in \cref{sec:Numeric} and conclude in \cref{sec:Outlook}.
	\section{Discretized total variation} \label{sec:Preliminary}
	The space
	of functions of bounded variation $\BV(\Omega)$ is the space of 
	functions $w \in L^1(\Omega)$ whose total variation is finite, that is,
	\[ \TV(w) \coloneqq \sup
	\left\{ \int_\Omega w(x) \dvg \phi(x)\dd x\,\middle|\,
	\phi \in C^1_c(\Omega;\R^d)\ \text{ and }\ \sup_{\mathclap{x \in \Omega}} \|\phi(x)\| \le 1
	\right\} < \infty.
	\]
	We denote the feasible set of \eqref{eq:p} by $L^1_W(\Omega) \coloneqq \left\{ w \in L^1(\Omega) \middle| \, w(x) \in W \text{ for a.a.\ } x \in \Omega \right\}$
	and its subset of solutions with finite objective value by $\BVW(\Omega) \coloneqq L^1_W(\Omega) \cap \BV(\Omega)$.
	\begin{lemma}\label{lem:convergence_in_L^p}
		Let $\{w^h\}_{h>0} \subset L^1_W(\Omega)$ be given such that $w^h \to w$ in $L^1(\Omega)$ as $h \searrow 0$. Then we have that $w \in L^1_W(\Omega)$ and $w^h \to w$ in $L^p(\Omega)$ for all $1 \leq p < \infty$.
	\end{lemma}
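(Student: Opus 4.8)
The plan is to treat the two assertions separately, in both cases exploiting that $W$ is a finite---hence bounded and closed---subset of $\Z$. Throughout I set $M \coloneqq \max_{v \in W} |v|$, so that every element of $L^1_W(\Omega)$ is bounded by $M$ in modulus almost everywhere.

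For the membership claim $w \in L^1_W(\Omega)$, I would invoke the standard fact that $L^1$-convergence passes to almost-everywhere convergence along a subsequence. Taking any null sequence $h_n \searrow 0$, I extract a subsequence $(h_{n_k})_k$ with $w^{h_{n_k}} \to w$ pointwise almost everywhere. Each $w^{h_{n_k}}$ attains values in $W$ off a null set; the union of these countably many null sets remains null, so on its complement $w(x)$ is the limit of a sequence in the finite---thus closed---set $W$, whence $w(x) \in W$ there. This gives $w \in L^1_W(\Omega)$.

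For the convergence claim, the key is the uniform pointwise bound $|w^h(x) - w(x)| \le 2M$ for almost every $x$, valid because $w^h$ and (by the first part) $w$ both take values in $W$. For fixed $1 \le p < \infty$ I would then estimate, almost everywhere, $|w^h(x) - w(x)|^p = |w^h(x) - w(x)|^{p-1}\,|w^h(x) - w(x)|$, and bound the first factor by $(2M)^{p-1}$ to obtain
\[
|w^h(x) - w(x)|^p \le (2M)^{p-1}\,|w^h(x) - w(x)| \quad\text{a.e. on } \Omega .
\]
Integrating over the finite-measure domain $\Omega$ yields
\[
\|w^h - w\|_{L^p(\Omega)}^p \le (2M)^{p-1}\,\|w^h - w\|_{L^1(\Omega)},
\]
whose right-hand side tends to $0$ as $h \searrow 0$ by hypothesis, so that $\|w^h - w\|_{L^p(\Omega)} \to 0$.

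I do not anticipate a genuine obstacle: the finiteness of $W$ simultaneously supplies the closedness needed for the membership claim and the uniform $L^\infty$-bound that upgrades $L^1$- to $L^p$-convergence. The only subtlety warranting care is that $L^1$-convergence yields pointwise convergence merely along a subsequence; this is harmless because the limit $w$ is already fixed, so identifying $w(x) \in W$ along one subsequence settles it for the full sequence.
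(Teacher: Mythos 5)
Your proof is correct and takes essentially the same route as the paper's: the membership claim via pointwise a.e.\ convergence along a subsequence combined with the closedness of the finite set $W$, and the $L^p$-convergence via the bound $\|w^h - w\|_{L^p(\Omega)}^p \le C^{p-1}\,\|w^h - w\|_{L^1(\Omega)}$ with $C$ the uniform pointwise bound on $|w^h - w|$. The paper uses the slightly sharper constant $\max_{w_1,w_2 \in W}|w_1 - w_2|$ in place of your $2M$, but this is immaterial.
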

	\begin{proof}
		The first claim follows because $w^h \to w$  pointwise a.e.\ for a subsequence, cf. Lemma 2.2 in \cite{leyffer2021sequential}. The second claim follows because
		\begin{align*}
			\| w- w^h \|_{L^p(\Omega)}^p \leq \|(w-w^h)^{p-1} \|_{L^\infty(\Omega)} \| w - w^h \|_{L^1(\Omega)} \to 0 \text{ as } h \searrow 0,
		\end{align*}
		since $\| (w - w^h)^{p-1} \|_{L^\infty(\Omega)} \leq (\max_{w_1,w_2 \in W} |w_1-w_2|)^{p-1}$ for all $h >0$.
	\end{proof}
	By the following lemma, which was proven in Theorem 1 in \cite{hintermuller2015density}, we can replace the test function space $C_c^1(\Omega;\R^d)$ for the computation of the total variation term by $H_0(\div;\Omega)$. Since $\Omega$ is a Lipschitz domain, we may define $H_0(\div;\Omega)$ by
	\begin{align*}
		H_0(\div;\Omega) \coloneqq \left\{ v \in H(\div;\Omega) \, \middle| \, v \cdot n |_{\partial \Omega} \equiv 0 \right\},
	\end{align*}
	where $n$ denotes the outer unit normal of $\Omega$ and $H_0(\div;\Omega) = H^1_0(\Omega)$ for $d=1$.
	\begin{lemma}\label{lem:TV_div}
		For all $w \in L^2(\Omega)$, it holds that
		\[
		\TV(w) = \sup \left\{ \int_\Omega w(x) \dvg \phi(x)\dd x\,\middle|\,
		\phi \in H_0(\dvg ; \Omega) \ \text{ and }\ \|\phi\|_{L^\infty(\Omega;\R^d)} \le 1 \right\}.
		\]
	\end{lemma}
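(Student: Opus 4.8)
The plan is to prove the two inequalities separately, writing $S(w)$ for the supremum on the right-hand side. Since every $\phi \in C^1_c(\Omega;\R^d)$ extends by zero to an element of $H_0(\div;\Omega)$ and the pointwise constraint $\sup_{x\in\Omega}\|\phi(x)\|\le 1$ is exactly $\|\phi\|_{L^\infty(\Omega;\R^d)}\le 1$, the feasible set in the definition of $\TV(w)$ is contained in the feasible set defining $S(w)$. Hence $\TV(w)\le S(w)$ is immediate, and this direction needs no work.

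For the reverse inequality $S(w)\le\TV(w)$ I would argue by approximation. Fix an admissible $\phi\in H_0(\div;\Omega)$ with $\|\phi\|_{L^\infty(\Omega;\R^d)}\le1$. The goal is to produce a sequence $\{\phi_n\}\subset C^1_c(\Omega;\R^d)$ with $\|\phi_n\|_{L^\infty(\Omega;\R^d)}\le1$ for all $n$ and $\div\phi_n\to\div\phi$ in $L^2(\Omega)$. Granting such a sequence, the hypothesis $w\in L^2(\Omega)$ together with the Cauchy--Schwarz inequality gives
\[
\int_\Omega w\,\div\phi_n\dd x \to \int_\Omega w\,\div\phi\dd x ,
\]
while each $\int_\Omega w\,\div\phi_n\dd x\le\TV(w)$ by definition of the total variation. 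Passing to the limit yields $\int_\Omega w\,\div\phi\dd x\le\TV(w)$, and taking the supremum over all admissible $\phi$ gives $S(w)\le\TV(w)$.

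The key observation that makes the pointwise constraint manageable under approximation is that mollification is an averaging operation: for a standard nonnegative mollifier $\rho_\eps$ with $\int\rho_\eps=1$, the value $(\phi*\rho_\eps)(x)$ is a convex combination of values $\phi(x-y)$ lying in the closed unit ball, so by convexity of the ball $\|(\phi*\rho_\eps)(x)\|\le1$ pointwise; moreover mollification commutes with the divergence, so $\div(\phi*\rho_\eps)=(\div\phi)*\rho_\eps\to\div\phi$ in $L^2(\Omega)$. Thus both the $L^\infty$ bound and the $L^2$ convergence of the divergences are preserved, which reduces the task to first replacing $\phi$ by a field that is compactly supported in $\Omega$.

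That compact-support reduction is where I expect the main obstacle to lie, and it is exactly where the vanishing normal trace $\phi\cdot n|_{\partial\Omega}\equiv0$ enters. On a Lipschitz domain one localizes with a partition of unity, flattens the boundary by a bi-Lipschitz map, and shifts the field inward along the (flattened) normal direction to obtain fields supported away from $\partial\Omega$ that still converge to $\phi$ in $H(\div;\Omega)$. The subtlety is that these operations must not inflate the pointwise norm beyond $1$: near a non-smooth boundary the change of variables can distort the constraint, and the natural remedy is to rescale the field by a factor $(1-\delta)\nearrow1$ to create the necessary slack before shifting and then mollifying. Managing this interplay between the pointwise constraint and the inward shift on a merely Lipschitz boundary is the technical crux, and it is the content carried out in \cite{hintermuller2015density}.
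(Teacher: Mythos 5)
Your proposal is correct and matches the paper's treatment: the paper gives no proof of its own but simply cites Theorem 1 of \cite{hintermuller2015density}, which is exactly the constrained-density result you identify as the technical crux (approximating $\phi \in H_0(\div;\Omega)$ with $\|\phi\|_{L^\infty(\Omega;\R^d)} \le 1$ by fields in $C_c^1(\Omega;\R^d)$ obeying the same pointwise bound). Your surrounding argument---the trivial inclusion giving $\TV(w) \le S(w)$, and the deduction of $S(w) \le \TV(w)$ from the approximation via Cauchy--Schwarz using $w \in L^2(\Omega)$---is the standard and correct way to derive the lemma from that density theorem.
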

	In order to discretize the total variation and later problem \eqref{eq:p}, we consider finite element meshes on $\Omega$ which fulfill the following assumptions.
	\begin{assumption}\label{assu:grid}\mbox{~}
		\begin{enumerate}
			\item $\TV$ meshes: For $h>0$, we consider a partition $\QQ_h$ of $\Omega$ into intervals or axis-aligned squares or cubes $Q \in \QQ_h$ of height $h>0$, that is, $\Omega = \bigcup_{ Q \in \QQ_h} Q$.\label{assu:TV_mesh}
			\item Input meshes: For each $h > 0$, we consider a mesh $\QQ_{\tau_h}$ of $\Omega$ of intervals or axis-aligned squares or cubes $Q \in \QQ_{\tau_h}$ of height $\tau_h \in (0,h]$ that is embedded into the $\TV$ mesh. Specifically, for each $\tilde{Q} \in \QQ_h$ there exists $\QQ_{\tilde{Q}} \subset \QQ_{\tau_h}$ such that $\bigcup_{Q \in \QQ_Q} Q = \tilde{Q}$. \label{assu:control_mesh}
		\end{enumerate}
	\end{assumption}
	We introduce the discretized total variation in \cref{subsec:Discretization_TV}. We prove its approximation properties in a $\Gamma$-convergence sense below, where \cref{subsec:Liminf_TV} is dedicated to the $\liminf$ inequality and \cref{subsec:Limsup_TV} provides the $\limsup$ inequality and an error estimate for the discretized total variation of the recovery sequence.
	
	\subsection{Discretization of the total variation}\label{subsec:Discretization_TV}
	
	For a mesh $\QQ_h$ with $h >0$ fulfilling \cref{assu:grid}.\ref{assu:TV_mesh} and $w \in \BV(\Omega)$, we define the following discretization of the total variation term $\TV$ as in \cite{Caillaud2020Error} by replacing the test functions for the computation of $\TV$ by Raviart--Thomas functions, that is,
	\[ {\TV}^{h}(w) \coloneqq \sup \left\{ \int_\Omega w(x)\dvg \phi(x)\dd x
	\,\middle|\,
	\phi \in RT0_0^h \ \text{ and }\ \|\phi\|_{L^\infty(\Omega,\R^d)} \le 1
	\right\},
	\]
	where $RT0^h_0 \coloneqq \left\{ \phi \in RT0^h \, \middle| \, (\phi \cdot n) |_{\partial \Omega} \equiv 0 \right\}$ and 
	$n$ again denotes the outer normal of $\Omega$.
	The space $RT0^h \subset H(\div;\Omega)$ is the lowest-order Raviart--Thomas space defined on the mesh $\QQ_h$ which is, according to \cite{brezzi1991mixedfe}, defined as follows.
		For each element $Q \in \QQ_h$, we define the space of polynomials of degree $k_i$ in $x_i$ on $Q$ by
		\begin{align*}
			P^h_{k_1}(Q) &\coloneqq \left\{ p : Q \to \R \, : \, p(x) = \sum_{i \leq k} a_i x^i \right\}, \\
			P^h_{k_1,k_2}(Q) &\coloneqq \left\{ p : Q \to \R \, : \, p(x_1,x_2) = \sum_{i \leq k_1, j \leq k_2} a_{ij} x_1^i x_2^j \right\}, \\
			P^h_{k_1,k_2,k_3}(Q) & \coloneqq \left\{ p : Q \to \R \, : \, p(x_1,x_2,x_3) = \sum_{i \leq k_1,j \leq k_2, \ell \leq k_3} a_{ij \ell} x_1^i x_2^j x_3^\ell \right\}.
		\end{align*}
		We highlight that $P0^h(Q) = P^h_0(Q)$ if $d=1$, $P0^h(Q) = P^h_{0,0}(Q)$ if $d=2$, and $P0^h(Q) = P^h_{0,0,0}(Q)$ if $d=3$.
		We define for each element $Q \in \QQ_h$
		\begin{align*}
			RT0^h(Q) \coloneqq \begin{cases}
				P^h_1(Q) & \text{ if } d = 1,\\
				P^h_{1,0}(Q) \times P^h_{0,1}(Q) & \text{ if } d=2, \\
				P^h_{1,0,0}(Q) \times P^h_{0,1,0}(Q) \times P^h_{0,0,1}(Q) & \text{ if } d=3.
			\end{cases}
	\end{align*}
	In particular, this implies that the elements in $RT0^h(Q)$ are linear in $Q$. The space $RT0^h$ of lowest-order Raviart--Thomas functions is then defined by
		\begin{align*}
			RT0^h \coloneqq \left\{ \phi \in H(\div;\Omega) \, : \, \phi |_Q \in RT0^h(Q) \, \text{
				for all } \, Q \in \QQ_h \right\}.
		\end{align*}
		Note that the restriction to axis-aligned squares and cubes $Q$ is mandatory for the above definition of Raviart--Thomas functions.
		
		By § III.3.3 in \cite{brezzi1991mixedfe}, there exists an interpolation operator $I_{RT0^h} : W^{1,\infty}(\Omega; \R^d) \to RT0^h$. By Proposition 3.7 in \cite{brezzi1991mixedfe} there holds for all $\phi \in W^{1,\infty}(\Omega;\R^d)$ that
		\begin{align}
			\div(I_{RT0^h} \phi) = \Pi_{P0^h} \div \phi. \label{eq:interpolation_div}
		\end{align}
		By Proposition 3.8 in \cite{brezzi1991mixedfe}, there exists a constant $b>0$ such that
		\begin{align}
			\| \div(\phi - I_{RT0^h} \phi) \|_{L^2(\Omega)} \leq b h \| \nabla \div \phi \|_{L^2(\Omega;\R^d)} \label{eq:est_norm_interpolation}
		\end{align}
		for all $\phi \in W^{1,\infty}(\Omega;\R^d)$ with $\dvg \phi \in H^1(\Omega)$.
		Let $\phi \in W^{1, \infty}(\Omega;\R^d)$. There exists a constant $C(\phi) >0$ such that
		\begin{align}
			\| I_{RT0^h}\phi\|_{L^\infty(\Omega;\R^d)} \leq \| \phi \|_{L^\infty(\Omega;\R^d)} + C(\phi)h. \label{eq:RT_linf_error_estimate}
		\end{align}
		The above statement can be found in Lemma 3.6 in \cite{Caillaud2020Error} for the case $d=2$. The proof can be adapted straightforwardly to the cases $d=1$ and $d=3$.
	\begin{lemma}\label{lem:div_bound}
		Let $\phi \in RT0^h$ for some $h >0$ satisfy $\| \phi \|_{L^\infty(\Omega;\R^d)} \leq 1$. Then $\| \dvg \phi \|_{L^\infty(\Omega)} \leq \frac{2d}{h}$.
	\end{lemma}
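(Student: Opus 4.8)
The plan is to exploit the explicit local polynomial structure of the lowest-order Raviart--Thomas functions. First I would fix an arbitrary element $Q \in \QQ_h$ and recall that, by the definition of $RT0^h(Q)$, every component $\phi_i$ of $\phi|_Q$ is affine in the variable $x_i$ and constant in the remaining coordinate directions. Consequently each $\partial_{x_i}\phi_i$ is constant on $Q$, so that $\div\phi = \sum_{i=1}^d \partial_{x_i}\phi_i$ is constant on $Q$; in other words $\div\phi \in P0^h$, which is also consistent with \eqref{eq:interpolation_div}. It therefore suffices to bound $|\partial_{x_i}\phi_i|$ on a single element by $2/h$.

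Next I would convert the constraint on the Euclidean norm into a bound on each scalar component. Since $\|\phi\|_{L^\infty(\Omega;\R^d)} \le 1$ is measured in the Euclidean norm, we have $|\phi_i(x)| \le \|\phi(x)\| \le 1$ for almost every $x$ and every $i \in \{1,\dots,d\}$. Restricting $\phi_i$ to a segment parallel to the $x_i$-axis that crosses $Q$, it becomes an affine function of a single variable on an interval of length $h$ whose modulus is bounded by $1$. Denoting its (constant) slope by $\partial_{x_i}\phi_i$, the difference of its values at the two endpoints of the segment equals $h\,\partial_{x_i}\phi_i$ and is bounded in modulus by $|\phi_i| + |\phi_i| \le 2$, which yields $|\partial_{x_i}\phi_i| \le 2/h$.

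Finally I would sum the $d$ componentwise estimates to obtain $|\div\phi| \le \sum_{i=1}^d |\partial_{x_i}\phi_i| \le 2d/h$ on each $Q$, and since this holds on every element of $\QQ_h$ and $\div\phi$ is piecewise constant, the claimed bound $\|\div\phi\|_{L^\infty(\Omega)} \le 2d/h$ follows. I do not expect a genuine obstacle here; the only two points requiring care are that the $\R^d$-valued $L^\infty$ bound is taken in the Euclidean norm, so that passing to each scalar component is free, and that the Raviart--Thomas structure makes each component affine in exactly one coordinate, which is precisely what renders the divergence piecewise constant and reduces the estimate to a one-dimensional slope bound. The resulting constant $2d/h$ need not be sharp, since the Euclidean constraint couples the components at the element corners, but sharpness is irrelevant for the stated inequality.
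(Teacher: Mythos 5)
Your proposal is correct and follows essentially the same argument as the paper: both exploit that each component $\phi_i$ of an $RT0^h$ function is affine in $x_i$ alone (so $\div\phi$ is elementwise constant), bound the constant slope by the difference of the endpoint values of $\phi_i$ over an interval of length $h$, and sum the $d$ componentwise bounds $2/h$ to obtain $2d/h$. The paper merely writes this out in coordinates, estimating $|c_i| \le \frac{1}{h}\left(|a_i + c_i l_i + c_i h| + |a_i + c_i l_i|\right) \le \frac{2}{h}$, which is precisely your one-dimensional slope bound.
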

	\begin{proof}
		Consider $\phi |_{\bar{Q}}$ on the element $\bar{Q} = l+ [0,h]^d$ with $l \in \R^d$. Then $\phi |_{\bar{Q}} (x) = a + \sum_{i=1}^{d} c_i e^i x$ for $x \in \bar{Q}$ with $a,c \in \R^d$ and $e^i \in \R^d$, $i \in \{1,\dots,d\}$, the canonical unit vector basis. Since $\| \phi \|_{L^\infty(\Omega;\R^d)} \leq 1$, we have for each $i \in \{1,...,d\}$ that $|a_i + c_i x_i | \leq 1$ for all $x_i \in l_i + [0,h]$, in particular $|a_i + c_i l_i + c_ih | \leq 1 \text{ and } |a_i + c_i l_i | \leq 1.$
		This yields
		\begin{align*}
			| \div \phi |_{\bar{Q}} | \leq \sum_{i=1}^d | c_i | \leq \sum_{i=1}^d \frac{1}{h} ( |a_i + c_i l_i + c_i h | + |a_i +c_i l_i|) \leq \sum_{i=1}^d \frac{2}{h} = \frac{2d}{h}.
		\end{align*}
	\end{proof}
	In contrast to $\TV$, the discretized total variation $\TVh$ always admits a maximizer and is thus always finite. Moreover, $\TV$ is always an upper bound for $\TVh$.
	\begin{lemma}\label{lem:TVh}
		Let $w \in L^1_W(\Omega)$. Then there is some $\phi \in RT0^h_0$ with $\| \phi \|_{L^\infty(\Omega;\R^d)} \leq 1$ such that $\TVh(w) = \int_{\Omega} w(x) \div \phi(x ) \dd x < \infty$ and $\TVh(w) \le \TV(w)$.
	\end{lemma}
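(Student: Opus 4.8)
The plan is to verify the three assertions separately: the inequality $\TVh(w) \le \TV(w)$, the finiteness $\TVh(w) < \infty$, and the attainment of the defining supremum. Throughout, I would first record that, since $W \subset \Z$ is finite and $\Omega$ is bounded, every $w \in L^1_W(\Omega)$ is bounded, so $w \in L^\infty(\Omega) \subset L^2(\Omega)$; this makes \cref{lem:TV_div} applicable to $w$.

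For the inequality, I would observe that every Raviart--Thomas function lies in $H(\div;\Omega)$ by construction and that the boundary condition defining $RT0^h_0$ yields $RT0^h_0 \subset H_0(\div;\Omega)$. Hence the admissible test functions for $\TVh(w)$ form a subset of those appearing in the characterization of $\TV(w)$ in \cref{lem:TV_div}, both constrained by $\|\cdot\|_{L^\infty(\Omega;\R^d)} \le 1$. Taking the supremum over the smaller set then gives $\TVh(w) \le \TV(w)$ directly.

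For finiteness I would \emph{not} rely on the previous inequality, since $\TV(w)$ may well be $+\infty$ for a general $w \in L^1_W(\Omega)$. Instead, I would bound the objective uniformly on the admissible set using \cref{lem:div_bound}: for any admissible $\phi$,
\[
\left| \int_\Omega w(x) \div \phi(x) \dd x \right| \le \|w\|_{L^1(\Omega)} \, \|\div \phi\|_{L^\infty(\Omega)} \le \frac{2d}{h}\,\|w\|_{L^1(\Omega)} < \infty ,
\]
which bounds $\TVh(w)$ and hence gives $\TVh(w) < \infty$.

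The attainment of the supremum is the only part requiring a compactness argument, and it is the main point. I would use that $RT0^h$ is finite-dimensional, since the mesh $\QQ_h$ has finitely many cells and $\phi|_Q$ is polynomial on each of them; consequently its subspace $RT0^h_0$ is finite-dimensional as well. On a finite-dimensional space all norms are equivalent, so the admissible set $K \coloneqq \{\phi \in RT0^h_0 : \|\phi\|_{L^\infty(\Omega;\R^d)} \le 1\}$ is bounded; it is closed as the intersection of the closed subspace $RT0^h_0$ with the closed $L^\infty$-unit ball, and nonempty since $0 \in K$. Therefore $K$ is compact by the Heine--Borel theorem, and since $\phi \mapsto \int_\Omega w(x) \div \phi(x) \dd x$ is linear and hence continuous, it attains its maximum on $K$. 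Any maximizer $\phi$ then satisfies all three claims at once. The one subtlety to keep in mind is that compactness of $K$ hinges entirely on the finite-dimensionality of $RT0^h_0$; the corresponding $L^\infty$-ball in the full space $H_0(\div;\Omega)$ would not be compact, which is precisely why the analogous supremum defining $\TV$ need not be attained.
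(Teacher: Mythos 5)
Your proposal is correct and follows essentially the same route as the paper: the inequality $\TVh(w) \le \TV(w)$ comes from the inclusion $RT0^h_0 \subset H_0(\div;\Omega)$ together with \cref{lem:TV_div}, and attainment follows by viewing the supremum as a linear objective maximized over a compact subset of the finite-dimensional space $RT0^h_0$. The only cosmetic differences are that you justify compactness via norm equivalence and Heine--Borel (the paper instead describes the feasible set by finitely many convex constraints on nodal values of $\phi$), and that you add a direct uniform bound via \cref{lem:div_bound} to get finiteness, which the paper obtains as a by-product of attainment.
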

	\begin{proof}
		$\TVh(w) \le \TV(w)$ follows from \cref{lem:TV_div} since $RT0_0^h \subset H_0(\div;\Omega)$. The existence of a maximizer of $\TVh$ follows from the fact that the corresponding maximization problem can be rewritten as a finite-dimensional optimization problem with linear objective function and compact feasible set. This is because $RT0_0^h$ is finite-dimensional,
		$\div \phi$ for $\phi \in RT0_0^h$ is constant on the grid cells $Q \in \QQ_h$, and the feasible set of $\TVh(w)$ can be described by finitely many convex inequalities by bounding the Euclidean norm of the point evaluations of $\phi$ in each node of the mesh $\QQ_h$ by $1$.
	\end{proof}
	In order to discretize \eqref{eq:p}, we replace the total variation term $\TV(w)$ by the discretized total variation term $\TVh(w)$ in the objective. As a result, we can no longer guarantee the existence of minimizers, as stated in the following remark, because $\TVh$ has a greater null space than $\TV$. We will fix this issue in \cref{sec:Gamma-convergence}.
	\begin{remark}\label{expl:existence_of_minimizers}
		If we replace $\TV$ by $\TVh$ in problem \eqref{eq:p}, we can not guarantee the existence of minimizers anymore because $\TVh$ has a greater null space than $\TV$. This is due to the fact that for each $w \in L^1(\Omega)$, we have that
		\begin{align}\label{eq:TVh_P0h}
			\int_\Omega w(x) \div \phi (x) \dd x = \sum_{Q \in \QQ_h} d_Q \int_{Q} w(x) \dd x = \int_\Omega \tilde{w}(x) \div \phi (x) \dd x
		\end{align}
		for all $\phi \in RT0^h$ with $\div \phi |_{Q} \eqqcolon d_Q \in \R$ for all $Q \in \QQ_h$ and all $\tilde{w} \in L^1(\Omega)$ with $\int_Q \tilde{w}(x) \dd x = \int_{Q} w(x) \dd x$ for all $Q \in \QQ_h$. This in particular yields $\TVh(w) = \TVh(\tilde{w})$. Hence, there may exist a minimizing sequence $\{w^n\}_{n \in \N} \subset \BVW(\Omega)$ consisting of chattering functions (for example with checkerboard structure) with $\TVh(w^n) =0$ for all $n \in \N$ that converges weakly to some limit function that is not in $\BVW(\Omega)$.
	\end{remark}
	\subsection{Lim inf inequality for $\TVh$} \label{subsec:Liminf_TV}
	Let a mesh $\QQ_\tau$ with $\tau>0$ fulfill \cref{assu:grid}. We define the space of functions that are piecewise constant on the mesh cells $Q \in \QQ_{\tau}$ by $P0^\tau$.
	Let $\Pi_{P0^\tau}: L^1(\Omega) \to P0^\tau$ denote the projection onto $P0^\tau$.
	\begin{lemma}\label{lem:estimate_ProjP0h}
		Let $w \in L^1_W(\Omega)$. Then $\| w - \Pi_{P0^\tau}w\|_{L^1(\Omega)} \to 0 \text{ as } \tau \searrow 0$.
		Moreover, for all $w \in \BVW(\Omega)$ there holds $\|w - \Pi_{P0^\tau}w\|_{L^1(\Omega)} \le \sqrt{d} \tau \TV(w)$.
	\end{lemma}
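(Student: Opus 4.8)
The plan is to prove the quantitative estimate for $w \in \BVW(\Omega)$ first and then deduce the qualitative convergence for general $w \in L^1_W(\Omega)$ by a density argument. Throughout I use that the ($L^2$-orthogonal) projection $\Pi_{P0^\tau}$ acts as cell-averaging, $(\Pi_{P0^\tau}w)|_Q = \tfrac{1}{|Q|}\int_Q w\dd x$ for each $Q \in \QQ_\tau$ (this is exactly the integral-preserving map appearing in \cref{expl:existence_of_minimizers}), and that by Jensen's inequality it is an $L^1$-contraction, $\|\Pi_{P0^\tau}v\|_{L^1(\Omega)} \le \|v\|_{L^1(\Omega)}$ for all $v \in L^1(\Omega)$.

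For the rate I would reduce to a single cell by summing the local errors $\|w - \overline{w}_Q\|_{L^1(Q)}$ over the cells $Q \in \QQ_\tau$, where $\overline{w}_Q$ is the average of $w$ on $Q$. On a cube $Q$ of side $\tau$ I would establish $\|w - \overline{w}_Q\|_{L^1(Q)} \le \tau \sum_{i=1}^d \int_Q |\partial_i w|\dd x$ for smooth $w$ by a coordinatewise telescoping: writing $A_i$ for the averaging operator in the $i$-th coordinate over $Q$, one has $\overline{w}_Q = A_1 \cdots A_d w$, hence $w - \overline{w}_Q = \sum_{i=1}^d A_1 \cdots A_{i-1}(w - A_i w)$. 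Since each $A_j$ is an $L^1$-contraction, each summand is bounded in $L^1(Q)$ by $\|w - A_i w\|_{L^1(Q)}$, and the one-dimensional fundamental theorem of calculus applied on the $\tau$-length fibres in direction $i$ gives $\|w - A_i w\|_{L^1(Q)} \le \tau \int_Q |\partial_i w|\dd x$. Summing over the cells yields $\|w - \Pi_{P0^\tau}w\|_{L^1(\Omega)} \le \tau \sum_{i=1}^d \int_\Omega |\partial_i w|\dd x$, and the pointwise Cauchy--Schwarz bound $\sum_{i=1}^d |\partial_i w| \le \sqrt{d}\,|\nabla w|$ converts this into $\sqrt{d}\,\tau \int_\Omega |\nabla w|\dd x = \sqrt{d}\,\tau\,\TV(w)$. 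To pass from smooth functions to general $w \in \BVW(\Omega)$, I would use the standard smooth approximation of $\BV$ functions (strict convergence) to obtain $w_k \to w$ in $L^1(\Omega)$ with $\int_\Omega |\nabla w_k|\dd x \to \TV(w)$; the contraction property gives $\Pi_{P0^\tau}w_k \to \Pi_{P0^\tau}w$ in $L^1(\Omega)$, so both sides of the smooth estimate pass to the limit.

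For the qualitative claim I would use that $C_c^\infty(\Omega) \subset \BV(\Omega)$ is dense in $L^1(\Omega)$: given $\eps > 0$, pick $g \in C_c^\infty(\Omega)$ with $\|w - g\|_{L^1(\Omega)} < \eps$ and split $\|w - \Pi_{P0^\tau}w\|_{L^1(\Omega)} \le \|w - g\|_{L^1(\Omega)} + \|g - \Pi_{P0^\tau}g\|_{L^1(\Omega)} + \|\Pi_{P0^\tau}(g - w)\|_{L^1(\Omega)}$. The first and third terms are each below $\eps$ (the third by the contraction property), while the middle term is bounded by $\sqrt{d}\,\tau\,\TV(g) \to 0$ as $\tau \searrow 0$ by the already established rate; hence $\limsup_{\tau \searrow 0}\|w - \Pi_{P0^\tau}w\|_{L^1(\Omega)} \le 2\eps$, and $\eps > 0$ was arbitrary. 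The main obstacle is obtaining the precise constant $\sqrt{d}$ rather than a larger one: a direct double-integral/Poincaré bound on the cube is too lossy (it produces dimensional constants strictly worse than $\sqrt d$ already for $d=1$), so it is essential to split the error into the $d$ coordinate directions via the telescoping above and invoke Cauchy--Schwarz only at the very end. A secondary point requiring care is the smooth-to-$\BV$ passage: one must ensure $\int_\Omega |\nabla w_k|\dd x \to \TV(w)$ (and not merely a lower bound) so that the constant is not inflated in the limit, which is precisely what strict smooth approximation of $\BV$ functions provides.
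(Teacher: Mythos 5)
Your proof is correct, but it follows a genuinely different route from the paper's. The paper treats the two claims independently: the qualitative convergence for $w \in L^1_W(\Omega)$ is obtained from Lebesgue's differentiation theorem ($\Pi_{P0^\tau}w \to w$ pointwise a.e.) combined with dominated convergence, which exploits the uniform $L^\infty$ bound coming from the finiteness of $W$; the quantitative bound $\|w - \Pi_{P0^\tau}w\|_{L^1(\Omega)} \le \sqrt{d}\,\tau\,\TV(w)$ is not proved there at all but cited from the literature (Theorem 12.26, inequality (12.24), in Maggi's book, and Lemma 3.2 in Caillaud--Chambolle). You instead prove the rate from scratch: the coordinatewise telescoping through the one-dimensional averaging operators $A_1,\dots,A_d$, the $L^1$-contraction of each $A_j$, the fibrewise fundamental theorem of calculus, and the pointwise Cauchy--Schwarz step $\sum_i |\partial_i w| \le \sqrt{d}\,|\nabla w|$ together constitute a self-contained proof of exactly the inequality the paper outsources, and your insistence on \emph{strict} smooth approximation of $\BV$ functions is indeed the correct way to carry the constant $\sqrt{d}$ over to general $w \in \BVW(\Omega)$ without inflation. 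Your qualitative claim then follows from the rate by density of $C_c^\infty(\Omega)$ in $L^1(\Omega)$ and the contraction property of $\Pi_{P0^\tau}$; note this argument never uses integrality of the values, so it proves the convergence for every $w \in L^1(\Omega)$, which is more general than the paper's statement, whereas the paper's differentiation-theorem argument is shorter and needs no approximation theory. In short: your route buys self-containedness and generality at the cost of invoking strict $\BV$ approximation; the paper's buys brevity by citation and by leaning on the boundedness of $W$-valued functions.
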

	\begin{proof}
		Let $w \in L^1_W(\Omega)$. Lebesgue's differentiation theorem \cite[Chap. 3, Cor. 1.6 and 1.7]{stein2005real} gives $\Pi_{P0^\tau}w \to w$ pointwise a.e.\ in $\Omega$ as $\tau \searrow 0$. Moreover, $\| \Pi_{P0^\tau}w\|_{L^\infty(\Omega)} \leq \max_{w \in W} |w| \in \R$ such that Lebesgue's dominated convergence theorem yields $\| w - \Pi_{P0^\tau}w \|_{L^1(\Omega)} \to 0$ as $\tau \searrow 0$.
		Since $\Omega$ is in particular of finite perimeter, the proof of the second claim runs along the lines of Theorem 12.26, particularly (12.24),
		in \cite{maggi2012sets}. A similar proof for $\Omega = (0,1)^2$ is given in Lemma 3.2 in \cite{Caillaud2020Error}.
	\end{proof}
	\begin{lemma}
		Let $\{w^h\}_{h>0} \subset L^1(\Omega)$ with $w^h \to w$ in $L^1(\Omega)$ and $w \in L^2(\Omega)$.
		Let $\phi \in C^1_c(\Omega;\R^d)$ satisfy $\|\phi\|_{L^\infty(\Omega;\R^d)} \le 1$. Then
		\begin{align} \label{eq:div_liminf_TVh}
			\int_{\Omega} \dvg \phi(x) w(x)\dd x 
			\le \liminf_{h \searrow 0} \TVh(w^h).
		\end{align}
	\end{lemma}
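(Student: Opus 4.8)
The plan is to test $\TVh(w^h)$ against a Raviart--Thomas approximation of the fixed smooth field $\phi$ and then pass to the limit. Concretely, I set $\phi^h \coloneqq I_{RT0^h}\phi$, which is well defined since $\phi \in C^1_c(\Omega;\R^d) \subset W^{1,\infty}(\Omega;\R^d)$. Because $\phi$ has compact support, $\supp\phi$ has positive distance to $\partial\Omega$, so for all sufficiently small $h$ every mesh cell meeting $\partial\Omega$ lies outside $\supp\phi$; the interpolant then vanishes on these cells, whence $(\phi^h \cdot n)|_{\partial\Omega} \equiv 0$, i.e.\ $\phi^h \in RT0^h_0$. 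The only defect of $\phi^h$ as a competitor is that its supremum norm may slightly exceed one: by \eqref{eq:RT_linf_error_estimate} we have $\|\phi^h\|_{L^\infty(\Omega;\R^d)} \le \|\phi\|_{L^\infty(\Omega;\R^d)} + C(\phi)h \le 1 + C(\phi)h$. I therefore rescale and set $\tilde\phi^h \coloneqq \phi^h / (1 + C(\phi)h)$, which still lies in $RT0^h_0$ and satisfies $\|\tilde\phi^h\|_{L^\infty(\Omega;\R^d)} \le 1$, so it is admissible in the supremum defining $\TVh(w^h)$.

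Testing the definition of $\TVh$ with $\tilde\phi^h$ gives, for all small $h$, the lower bound $\TVh(w^h) \ge \int_\Omega w^h(x)\,\dvg\tilde\phi^h(x)\dd x = \frac{1}{1+C(\phi)h}\int_\Omega w^h(x)\,\dvg\phi^h(x)\dd x$. The next step is to identify $\dvg\phi^h$ and pass to the limit in the integral. By the commutation property \eqref{eq:interpolation_div}, $\dvg\phi^h = \Pi_{P0^h}\dvg\phi$, so it remains to establish that $\int_\Omega w^h\,\Pi_{P0^h}\dvg\phi\dd x \to \int_\Omega w\,\dvg\phi\dd x$ as $h \searrow 0$, after which the prefactor $1/(1+C(\phi)h)\to 1$ will handle the rest.

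For this convergence I would split the difference as $\int_\Omega (w^h - w)\,\Pi_{P0^h}\dvg\phi\dd x + \int_\Omega w\,(\Pi_{P0^h}\dvg\phi - \dvg\phi)\dd x$. The first term is bounded by $\|w^h - w\|_{L^1(\Omega)}\,\|\Pi_{P0^h}\dvg\phi\|_{L^\infty(\Omega)} \le \|w^h - w\|_{L^1(\Omega)}\,\|\dvg\phi\|_{L^\infty(\Omega)}$, using that the piecewise-constant projection is a cellwise average and hence does not increase the $L^\infty$ norm; this vanishes since $w^h \to w$ in $L^1(\Omega)$ and $\dvg\phi$ is bounded. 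For the second term, $\dvg\phi$ is continuous with compact support, hence uniformly continuous, so $\Pi_{P0^h}\dvg\phi \to \dvg\phi$ uniformly and in particular in $L^2(\Omega)$; by Cauchy--Schwarz the term is controlled by $\|w\|_{L^2(\Omega)}\,\|\Pi_{P0^h}\dvg\phi - \dvg\phi\|_{L^2(\Omega)} \to 0$, which is exactly where the hypothesis $w \in L^2(\Omega)$ is used. Combining the two estimates yields the claimed convergence of the integral.

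Taking the $\liminf$ as $h \searrow 0$ in the lower bound, and using that its right-hand side converges, then gives $\liminf_{h\searrow0}\TVh(w^h) \ge \int_\Omega w(x)\,\dvg\phi(x)\dd x$, which is \eqref{eq:div_liminf_TVh}. I expect the step requiring the most care to be the verification that $\phi^h \in RT0^h_0$: the admissibility of the competitor in the supremum defining $\TVh$ hinges on the vanishing normal trace, and this is precisely where the compact support of $\phi$ is indispensable. The remaining estimates are routine consequences of \eqref{eq:interpolation_div}, \eqref{eq:RT_linf_error_estimate}, and the $L^\infty$- and $L^2$-stability of the piecewise-constant projection.
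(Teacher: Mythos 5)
Your proof is correct and follows the same core strategy as the paper's: test $\TVh(w^h)$ with the Raviart--Thomas interpolant of $\phi$, use the commutation property \eqref{eq:interpolation_div} together with the $L^\infty$ estimate \eqref{eq:RT_linf_error_estimate}, exploit the compact support of $\phi$ for the vanishing normal trace, and split the error into a term controlled by $\|w^h-w\|_{L^1(\Omega)}$ (using $L^\infty$-stability of $\Pi_{P0^h}$) and a term controlled via Cauchy--Schwarz by $\|w\|_{L^2(\Omega)}$, which is exactly where the hypothesis $w \in L^2(\Omega)$ enters in both arguments. The one place you diverge is the treatment of the sup-norm overshoot of the interpolant: the paper first shrinks the test function to $\hat{\phi} \coloneqq (1-\eps)\phi$, so that $\|I_{RT0^h}\hat{\phi}\|_{L^\infty(\Omega;\R^d)} \le 1$ for all $h \le \eps/C(\hat{\phi})$, and removes the resulting additive error $c_1\eps$ only at the very end by letting $\eps \searrow 0$; you instead divide the interpolant by $1+C(\phi)h$, which makes it admissible for every $h$ and turns the defect into a multiplicative factor $1/(1+C(\phi)h) \to 1$. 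Your variant is slightly cleaner in that it avoids the epsilon argument and yields the liminf inequality in one pass, and it is in fact the same normalization the paper itself employs in the proof of \cref{prop:TVh_lower_bound}, so nothing is lost: both routes rest on the same two interpolation facts and the same decomposition of the integral.
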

	\begin{proof}
		Let $\varepsilon \in (0,1)$. Then we approximate $\phi$ with $\hat{\phi} \coloneqq (1-\eps) \phi \in C_c^1(\Omega;\R^d)$
		such that $\|\hat{\phi}\|_{L^\infty(\Omega;\R^d)} = (1-\eps) \| \phi \|_{L^\infty(\Omega;\R^d)} \le 1 - \varepsilon$ and
		\begin{align*}
			\int_{\Omega} \dvg \phi(x) w(x)\dd x & = \int_\Omega \div \hat{\phi}(x) w(x) \dd x + \eps \int_\Omega \div \phi (x) w(x) \dd x \\
			& \le \int_{\Omega} \dvg \hat{\phi}(x) w(x)\dd x + \underbrace{\| \div \phi \|_{L^\infty(\Omega)} \|w\|_{L^1(\Omega)}}_{c_1\coloneqq } \varepsilon.
		\end{align*}
		By virtue of the error estimate \eqref{eq:RT_linf_error_estimate}, we obtain
		that $\| I_{RT0^h} \hat{\phi}\|_{L^\infty(\Omega;\R^d)} \le 1$ holds for all $h \leq \frac{\eps}{C(\hat{\phi})}$. Moreover, since $\hat{\phi}$ has compact support in $\Omega$, there holds $\hat{\phi} \cdot n \equiv 0$ on $\partial \Omega$, where $n$ denotes the unit outer normal of $\Omega$, so that $I_{RT0^h}\hat{\phi} \in RT0^h_0$.
		Since $\hat{\phi} \in C^1_c(\Omega;\R^d)$, there exists a constant $M(\hat{\phi})>0$ only depending on $\hat{\phi}$ such that $\| \div \hat{\phi} \|_{L^\infty(\Omega)} \leq M(\hat{\phi})$.
		This implies that $\| \Pi_{P0^h} \dvg\hat{\phi}\|_{L^\infty(\Omega)} \leq M(\hat{\phi})$ for all $h >0$ such that
		\begin{align*}
			\hspace{2em}&\hspace{-2em} \int_{\Omega} \dvg I_{RT0^h} \hat{\phi}(x) w(x)\dd x \\
			&\le \int_{\Omega} \dvg I_{RT0^h} \hat{\phi}(x) w^h(x)\dd x
			+ \|\dvg I_{RT0^h}\hat{\phi}\|_{L^\infty(\Omega)}\|w^h - w\|_{L^1(\Omega)}  \\
			&\le \TVh(w^h) + \|\dvg I_{RT0^h}\hat{\phi}\|_{L^\infty(\Omega)}\|w^h - w\|_{L^1(\Omega)} \\
			&= \TVh(w^h) + \| \Pi_{P0^h} \dvg\hat{\phi}\|_{L^\infty(\Omega)}\|w^h - w\|_{L^1(\Omega)} \\
			&\leq \TVh(w^h) + M(\hat{\phi})\|w^h - w\|_{L^1(\Omega)},
		\end{align*}
		where we used $\div I_{RT0^h} \hat{\phi} = \Pi_{P0^h} \div \hat{\phi}$ by \eqref{eq:interpolation_div}, yielding
		\begin{align*}
			\hspace{2em}&\hspace{-2em} \int_{\Omega} \dvg \phi(x) w(x)\dd x \leq \int_\Omega \div \hat{\phi}(x) w(x) \dd x + c_1 \eps \\
			& =  \int_\Omega \div I_{RT0^h}\hat{\phi}(x) w(x) \dd x + \int_\Omega \div(\hat{\phi} - I_{RT0^h } \hat{\phi})(x) w(x) \dd x + c_1 \eps \\
			& \leq \TVh(w^h) + M(\hat{\phi}) \| w^h - w \|_{L^1 (\Omega)} + \| \div(\hat{\phi}- I_{RT0^h} \hat{\phi}) \|_{L^2(\Omega)} \|w \|_{L^2(\Omega)} + c_1 \eps.
		\end{align*}
		Again using \eqref{eq:interpolation_div}, it follows that
			\begin{align*}
				\| \div(\hat{\phi}- I_{RT0^h} \hat{\phi}) \|_{L^2(\Omega)} = \| \div \hat{\phi}- \Pi_{P0^h} \div \hat{\phi} \|_{L^2(\Omega)} \to 0
			\end{align*}
		as $h \searrow 0$.
		Since $w \in L^2(\Omega)$, driving $h \searrow 0$ implies that
		\begin{align*}
			\int_{\Omega} \dvg \phi(x) w(x)\dd x \leq c_1 \varepsilon + \liminf_{h \searrow 0} \TVh(w^h).
		\end{align*}
		The claim follows because $\varepsilon > 0$ was chosen arbitrarily.
	\end{proof}
	\begin{theorem} \label{thm:TV_liminf}
		Let $\{w^h\}_{h >0} \subset L^1(\Omega)$ with $w^{h} \to w$ in $L^1(\Omega)$ as $h \searrow 0$ and $w \in L^2(\Omega)$. Then
		\begin{align*}
			\TV(w) \leq \liminf_{h \searrow 0} \TVh(w^{h}).
		\end{align*}
	\end{theorem}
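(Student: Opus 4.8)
The plan is to obtain the claim directly from the preceding lemma by passing to the supremum over the admissible test functions. Recall that the total variation of $w$ is defined as the supremum of $\int_\Omega w(x)\div\phi(x)\dd x$ taken over all $\phi \in C^1_c(\Omega;\R^d)$ with $\|\phi\|_{L^\infty(\Omega;\R^d)} \le 1$. The preceding lemma, specifically the inequality \eqref{eq:div_liminf_TVh}, already establishes that each individual such test function satisfies
\[
\int_\Omega \div \phi(x)\, w(x)\dd x \le \liminf_{h \searrow 0} \TVh(w^h),
\]
under exactly the hypotheses assumed here, namely $w^h \to w$ in $L^1(\Omega)$ and $w \in L^2(\Omega)$.

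The key observation is that the right-hand side $\liminf_{h \searrow 0} \TVh(w^h)$ does not depend on the chosen test function $\phi$. Hence it is a common upper bound for the entire family $\bigl\{ \int_\Omega w\, \div\phi \dd x : \phi \in C^1_c(\Omega;\R^d),\ \|\phi\|_{L^\infty(\Omega;\R^d)} \le 1 \bigr\}$. Taking the supremum over this family on the left and invoking the definition of $\TV$ then yields $\TV(w) \le \liminf_{h \searrow 0} \TVh(w^h)$, which is precisely the assertion.

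Since the test-function class used in the preceding lemma coincides verbatim with the one appearing in the definition of $\TV$, no density or approximation step is needed and the passage to the supremum is immediate. The substantive work --- controlling the Raviart--Thomas interpolation error via \eqref{eq:RT_linf_error_estimate} and \eqref{eq:interpolation_div}, together with exploiting the $L^1$-convergence of $w^h$ and the hypothesis $w \in L^2(\Omega)$ --- is entirely contained in the preceding lemma. Consequently I expect no real obstacle at the level of the theorem itself; the only remaining step is the routine exchange of a fixed-$\phi$ bound for the supremum bound described above.
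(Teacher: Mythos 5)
Your proof is correct and is exactly the paper's argument: the paper likewise obtains the theorem by supremizing over all $\phi \in C_c^1(\Omega;\R^d)$ with $\|\phi\|_{L^\infty(\Omega;\R^d)} \le 1$ in \eqref{eq:div_liminf_TVh}, where all the substantive interpolation and convergence estimates were already established. Your observation that the right-hand side is independent of $\phi$, so the passage to the supremum is immediate, is precisely the intended one-line conclusion.
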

	\begin{proof}
		We supremize over all $\phi \in C_c^1(\Omega;\R^d)$ with $\| \phi \|_{L^\infty(\Omega;\R^d)} \leq 1$ in \eqref{eq:div_liminf_TVh}.
	\end{proof}
	Up to this point the discretization of the input function $w$ has not been relevant. Indeed, the previous statements hold for arbitrary sequences with $w^h \to w$ in $L^1(\Omega)$ with $w \in L^2(\Omega)$.
	Instead, the discretized total variation $\TVh$ discretizes its inputs $w$ implicitly in the sense that $\TVh(w) = \TVh(\Pi_{P0^h}w)$, see \eqref{eq:TVh_P0h}.
	That means if we would waive the integrality constraint for the discretized problems, we could approximate the total variation of a given function $w \in \BVW(\Omega)$ with the discretized total variation of the projections $\Pi_{P0^h} w$. In particular, we refer to the results in \cite{Caillaud2020Error}.
	Since we additionally---and in particular differing from \cite{Caillaud2020Error}---require that the discretized input functions only attain values in the discrete set $W$, we can not work with the projections $\Pi_{P0^h} w$ as in \cite{Caillaud2020Error}.
	Due to the given geometry of the mesh, it is generally not possible to approximate the term $\TV(w)$ for $w \in \BVW(\Omega)$ with $\TVh$ of functions in $P0^h \cap \BVW(\Omega)$, as the following example shows.
	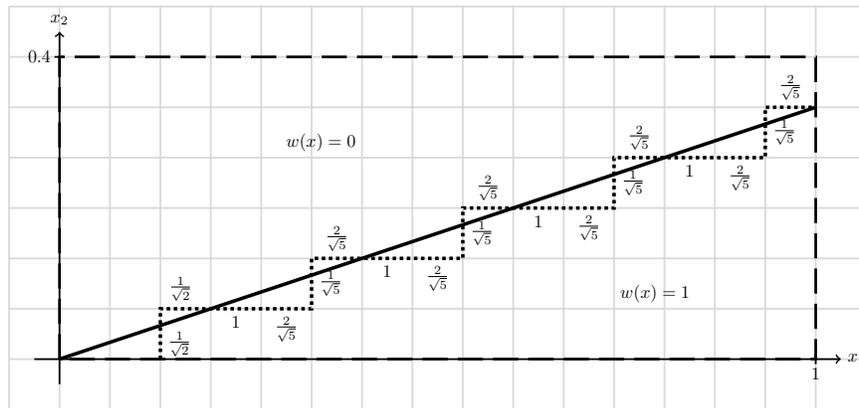
\begin{figure}[h]
		\centering
		\begin{minipage}{\textwidth}
			\centering
			\resizebox{0.7\textwidth}{!}{
				\begin{tikzpicture}
					\def\n{17} 
					\def\m{8} 
					\draw[step=1,gray!30,line width=1pt] (0,0) grid (\n,\m);
					\draw[->, line width=1pt] (\n/2 - 8, \n/2 - 7.5) -- (\n-0.5,\n/2 - 7.5) node[right] {$x_1$};
					\draw[->, line width=1pt] (\n/2 - 7.5, \n/2 - 8) -- (\n/2 - 7.5,\m-0.5) node[above] {$x_2$};
					\draw[line width=1pt] (\n/2 + 7.5,\n/2 - 7.5-.1) -- (\n/2 + 7.5, \n/2 - 7.5+ .1) node[below=4pt] {$1$};
					\draw[line width=1pt] (\n/2 - 7.5-.1, \m-1) -- (\n/2 - 7.5+ .1, \m-1) node[left=4pt] {$0.4$};
					%
					\draw[line width=1.5pt,dash pattern=on 5mm off 2mm] (\n/2 - 7.5, \n/2 - 7.5) rectangle (\n/2 + 7.5, \m/2 + 3);
					%
					\draw[black, line width=2pt] (\n/2 - 7.5, \n/2 - 7.5) -- (\n/2 + 7.5, \n/2 - 7.5/3);
					\draw[line width=2pt, dotted] (\n/2 - 5.5, \n/2 - 7.5) node[right,yshift=0.3cm]{$\frac{1}{\sqrt{2}}$}
					-- (\n/2 - 5.5, \n/2 - 6.5) node[above,xshift=0.4cm]{$\frac{1}{\sqrt{2}}$}
					-- (\n/2 - 4.5, \n/2 - 6.5) node[below,xshift=0.5cm]{$1$}
					-- (\n/2 - 3.5, \n/2 - 6.5) node[below,xshift=0.5cm]{$\frac{2}{\sqrt{5}}$}
					-- (\n/2 - 2.5, \n/2 - 6.5) node[right,yshift=0.5cm]{$\frac{1}{\sqrt{5}}$}
					-- (\n/2 - 2.5, \n/2 - 5.5) node[above,xshift=0.5cm]{$\frac{2}{\sqrt{5}}$}
					-- (\n/2 - 1.5, \n/2 - 5.5) node[below,xshift=0.5cm]{$1$}
					-- (\n/2 - 0.5, \n/2 - 5.5) node[below,xshift=0.5cm]{$\frac{2}{\sqrt{5}}$}
					-- (\n/2 + 0.5, \n/2 - 5.5) node[right,yshift=0.5cm]{$\frac{1}{\sqrt{5}}$}
					-- (\n/2 + 0.5, \n/2 - 4.5) node[above,xshift=0.5cm]{$\frac{2}{\sqrt{5}}$}
					-- (\n/2 + 1.5, \n/2 - 4.5) node[below,xshift=0.5cm]{$1$}
					-- (\n/2 + 2.5, \n/2 - 4.5) node[below,xshift=0.5cm]{$\frac{2}{\sqrt{5}}$}
					-- (\n/2 + 3.5, \n/2 - 4.5) node[right,yshift=0.5cm]{$\frac{1}{\sqrt{5}}$}
					-- (\n/2 + 3.5, \n/2 - 3.5) node[above,xshift=0.5cm]{$\frac{2}{\sqrt{5}}$}
					-- (\n/2 + 4.5, \n/2 - 3.5) node[below,xshift=0.5cm]{$1$}
					-- (\n/2 + 5.5, \n/2 - 3.5) node[below,xshift=0.5cm]{$\frac{2}{\sqrt{5}}$}
					-- (\n/2 + 6.5, \n/2 - 3.5) node[right,yshift=0.5cm]{$\frac{1}{\sqrt{5}}$}
					-- (\n/2 + 6.5, \n/2 - 2.5) node[above,xshift=0.5cm]{$\frac{2}{\sqrt{5}}$}
					-- (\n/2 + 7.5, \n/2 - 2.5);
					\node[above left] at (7,5) {$w(x) = 0$};
					\node[above right] at (12,2) {$w(x) =1$};
			\end{tikzpicture}}
			\caption{Example for the construction in \cref{expl:TVh} with $k = 1$. The level sets of the limit function $w$ are separated by the solid line and the level sets of the rounding of $\Pi_{P0^h} w$ are separated by the dotted line. Nonzero fluxes of $\phi$ are indicated next to the corresponding edges of the grid cells.}
			\label{fig:TVh}
		\end{minipage}
	\end{figure}
	\begin{example}\label{expl:TVh}
		Consider $\Omega = (0,1) \times (0,0.4)$ and the mesh $\QQ_h$ consisting of squares of size $h = \frac{1}{15k}$ for $k \in \N$. Define $w = \chi_{\{(\frac{1}{3},-1)^Tx \geq 0\}}$ and consider $w^h \in P0^h \cap \BVW(\Omega)$ obtained by rounding the projection $\Pi_{P0^h}w$ on each square to the nearest value in $W$, see \cref{fig:TVh} for an example for $k = 1$. Let $\phi \in RT0^h$ be the Raviart--Thomas function that is defined by the fluxes over the edges of the grid cells as exemplarily illustrated in \cref{fig:TVh}. It is immediate that $\| \phi \|_{L^\infty(\Omega;\R^d)} \leq 1$.
		Denote by $\EE_h$ the interior edges of the mesh $\QQ_h$ and let $n_E$ be a unit normal vector to an edge $E \in \EE_h$. Let $w^h_{E^+}$ and $w^h_{E^-}$ denote the values of $w^h$ on the two cubes adjoining $E$. Then,
		\begin{align*}
			\TVh (w^h) & \geq \int_{\Omega} \div \phi \, w^h \dd x = \sum_{E \in \EE_h} | w^h_{E^+} - w^h_{E^-} | \, \int_E \phi \cdot n_E \dd \Ha^{1}(x) \\
			& = \frac{1}{15k} \left( \left( 5k-1 \right) \left( 1 + 2 \cdot \frac{2}{\sqrt{5}} + \frac{1}{\sqrt{5}} \right) + 2 \cdot \frac{1}{\sqrt{2}} \right) \\
			& = \frac{1 + \sqrt{5}}{3} + \frac{\sqrt{2} -1 - \sqrt{5}}{15k} \to \frac{1 + \sqrt{5}}{3} \approx 1.078 \quad \text{as } k \to \infty.
		\end{align*}
		On the other hand, $\TV(w) = \frac{\sqrt{10}}{3} \approx 1.054$ such that $\TVh (w^h) \not \to \TV(w)$ as $h \searrow 0$. Instead, there holds $\TVh(w^h) > \TV(w)$ for $k \geq 5$.
	\end{example}
	\subsection{Lim sup inequality for $\TVh$} \label{subsec:Limsup_TV}
	To achieve an averaging effect by other means than the projection $\Pi_{P0^h}$,
	we discretize the functions $w \in \BVW(\Omega)$ on a finer mesh $\QQ_{\tau_h}$ embedded into the mesh $\QQ_h$ for $\TVh$ such that \cref{assu:grid} is fulfilled. This allows to recover $\TV(w)$ with $\TVh(w^{\tau_h})$ with functions $w^{\tau_h} \in P0^{\tau_h} \cap \BVW(\Omega)$ when the mesh sizes are superlinearly coupled, that is, $\frac{\tau_h}{h} \searrow 0$ as $h \searrow 0$.
	\begin{definition}\label{dfn:RWP0tau}
		We define the operator $R_{P0^\tau}^W : \BVW(\Omega) \to \BVW(\Omega) \cap P0^\tau$ for $\tau >0$ and a corresponding mesh $\QQ_\tau$ as follows.
		For $w \in \BVW(\Omega)$ and $Q \in \QQ_\tau$, let
		\begin{align*}
			R_{P0^\tau}^W(v)|_Q \in \argmin \left\{ \left| \frac{1}{|Q|} \int_Q w(x) \dd x - \omega \right| \, \middle| \, \omega \in W \text{ and } \left|w|_Q^{-1}(\omega) \right| >0\right\}.
		\end{align*}
		If the minimizer is not unique, we choose the smallest one.
	\end{definition}
	Similar to \cref{lem:estimate_ProjP0h}, we obtain convergence results for the sequence $\{R_{P0^\tau}^W(w)\}_{\tau>0}$ for $\tau \searrow 0$ with a convergence rate depending on $\TV(w)$ if $w \in \BVW(\Omega)$.
	\begin{lemma}\label{lem:estimate_RWP0h}
		Let $w \in L^1_W(\Omega)$. Then
		\begin{align*}
			\|w - R^W_{P0^\tau}(w)\|_{L^1(\Omega)} \leq 2 \| w - \Pi_{P0^\tau}w \|_{L^1(\Omega)} \to 0 \text{ as } \tau \searrow 0.
		\end{align*}
		Moreover, for all $w \in \BVW(\Omega)$ the following estimates hold:
		\begin{align*}
			\|w - R^W_{P0^\tau}(w)\|_{L^1(\Omega)} \leq 2 \sqrt{d} \tau \TV(w)
			\:
			\text{ and }
			\:
			\|R^W_{P0^\tau}(w) - \Pi_{P0^\tau}w\|_{L^1(\Omega)} \le \sqrt{d} \tau \TV(w).
		\end{align*}
	\end{lemma}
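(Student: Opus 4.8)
The plan is to reduce all three estimates to a single cell-wise comparison on the input mesh $\QQ_{\tau}$. Fix $w \in L^1_W(\Omega)$ and a cell $Q \in \QQ_\tau$, and write $m_Q \coloneqq \frac{1}{|Q|}\int_Q w(x)\dd x$ for the cell average, which is precisely the constant value of $\Pi_{P0^\tau}w$ on $Q$, and $r_Q \coloneqq R^W_{P0^\tau}(w)|_Q \in W$. The decisive property, immediate from \cref{dfn:RWP0tau}, is that $r_Q$ minimizes $|m_Q - \omega|$ over the values $\omega \in W$ that $w$ attains on $Q$ with positive measure; since $w(x) \in W$ is such an attained value for a.a.\ $x \in Q$, this yields
\begin{align*}
    |r_Q - m_Q| \le |w(x) - m_Q| \quad \text{for a.a.\ } x \in Q.
\end{align*}

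For the first estimate I would argue pointwise on $Q$: for a.a.\ $x \in Q$ the triangle inequality together with the displayed property gives $|w(x) - r_Q| \le |w(x) - m_Q| + |m_Q - r_Q| \le 2|w(x) - m_Q|$. Integrating over $Q$ and summing over all $Q \in \QQ_\tau$ yields $\|w - R^W_{P0^\tau}(w)\|_{L^1(\Omega)} \le 2\|w - \Pi_{P0^\tau}w\|_{L^1(\Omega)}$. The convergence to zero as $\tau \searrow 0$, and for $w \in \BVW(\Omega)$ the rate $\|w - R^W_{P0^\tau}(w)\|_{L^1(\Omega)} \le 2\sqrt{d}\tau\TV(w)$, then follow directly from the two assertions of \cref{lem:estimate_ProjP0h}.

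For the third estimate I would compare $r_Q$ with $m_Q$ directly using the same displayed property: integrating $|r_Q - m_Q| \le |w(x) - m_Q|$ over $Q$ gives $\int_Q |r_Q - m_Q|\dd x \le \int_Q |w(x) - m_Q|\dd x$, so that the $L^1(Q)$-distance of $R^W_{P0^\tau}(w)$ to $\Pi_{P0^\tau}w$ is dominated by that of $w$ to $\Pi_{P0^\tau}w$. Summing over the cells and invoking the second assertion of \cref{lem:estimate_ProjP0h} yields $\|R^W_{P0^\tau}(w) - \Pi_{P0^\tau}w\|_{L^1(\Omega)} \le \sqrt{d}\tau\TV(w)$.

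There is no substantial obstacle here; the only point requiring care is that the admissibility restriction in \cref{dfn:RWP0tau}---that $r_Q$ be a value \emph{attained} by $w$ on $Q$---is exactly what makes the comparison $|r_Q - m_Q| \le |w(x) - m_Q|$ valid pointwise for a.a.\ $x \in Q$, and this single inequality drives all three bounds. One should also note in passing that $w|_Q$ attains at least one value with positive measure, so the $\argmin$ in \cref{dfn:RWP0tau} is taken over a nonempty finite set and $R^W_{P0^\tau}(w)$ is well defined.
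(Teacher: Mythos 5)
Your proposal is correct and follows essentially the same route as the paper's proof: the pointwise comparison $|r_Q - m_Q| \le |w(x)-m_Q|$ (valid because the rounding selects the nearest value of $W$ attained by $w$ on $Q$) is exactly the paper's key inequality $|(\Pi_{P0^\tau}w)(x) - (R^W_{P0^\tau}(w))(x)| \le |(\Pi_{P0^\tau}w)(x) - w(x)|$, and both arguments then conclude via the triangle inequality and \cref{lem:estimate_ProjP0h}. Your cell-wise write-up and the remark on well-definedness of the $\argmin$ are slightly more explicit than the paper's, but the substance is identical.
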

	\begin{proof}
		For the first claim, let $w \in L^1_W(\Omega)$. By \cref{dfn:RWP0tau}, there holds $\| \Pi_{P0^\tau}w - R^W_{P0^\tau}(w) \|_{L^1(\Omega)} \leq \| w - \Pi_{P0^\tau}w \|_{L^1(\Omega)}$
		due to $| (\Pi_{P0^\tau}w)(x) - (R^W_{P0^\tau}(w))(x) |$ $\leq | (\Pi_{P0^\tau}w)(x) - w(x)|$ and $w(x) \in W$ for a.a.\ $x \in \Omega$ because the rounding operator rounds the value of $\Pi_{P0^\tau}w$ on each cell to the nearest value in $W$ that is also attained by $w$ on the corresponding cell.
		Hence,
		\begin{align*}
			\| w - R^W_{P0^\tau}(w) \|_{L^1(\Omega)} & \leq \| w - \Pi_{P0^\tau}w \|_{L^1(\Omega)} + \| \Pi_{P0^\tau}w - R^W_{P0^\tau}(w) \|_{L^1(\Omega)} \\
			& \leq 2 \| w - \Pi_{P0^\tau}w \|_{L^1(\Omega)} \to 0 \text{ as } \tau \searrow 0.
		\end{align*}
		The second claim follows then by \cref{lem:estimate_ProjP0h}.
	\end{proof}
	With these preparations, we are now able to prove a result corresponding to Lemma 3.1 in \cite{Caillaud2020Error}, where it is stated that $\TVh(\Pi_{P0^h} w) \leq \TV(w)$ for $w \in L^2(\Omega)$. Note that this proof hinges on the fact that $\dvg \phi$ is constant, which is due to our specific choice of lowest-order Raviart--Thomas functions.
	\begin{proposition}\label{lem:estimate_TVhRP0hTV}
		Let $w \in \BVW(\Omega)$ and a tuple $(h,\tau_h)$ with $h >0$ be given such that \cref{assu:grid} is fulfilled. Then
		\begin{align}
			\TVh(R^W_{P0^{\tau_h}}(w)) \le \TVh(w) + \varepsilon(\tau_h,h,w) 
			\le \TV(w) + \varepsilon(\tau_h,h,w) \label{eq:TVh_R_ineq}
		\end{align}
		with $\eps(\tau_h,h,w) \coloneqq \sqrt{d} \TV(w) \frac{2 d \tau_h}{h}$.
	\end{proposition}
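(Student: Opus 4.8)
The second inequality in \eqref{eq:TVh_R_ineq} is nothing but $\TVh(w) \le \TV(w)$, which is already furnished by \cref{lem:TVh}, so all the work lies in the first inequality. The plan is to certify $\TVh(R^W_{P0^{\tau_h}}(w))$ by a single test field and then reuse that very field for $\TVh(w)$. Concretely, by \cref{lem:TVh} there exists $\phi \in RT0^h_0$ with $\|\phi\|_{L^\infty(\Omega;\R^d)} \le 1$ such that $\TVh(R^W_{P0^{\tau_h}}(w)) = \int_\Omega R^W_{P0^{\tau_h}}(w)(x)\,\div\phi(x)\,\dd x$. Since this same $\phi$ is feasible in the supremum defining $\TVh(w)$, it yields the lower bound $\TVh(w) \ge \int_\Omega w(x)\,\div\phi(x)\,\dd x$, and subtracting gives
\[
\TVh(R^W_{P0^{\tau_h}}(w)) - \TVh(w) \le \int_\Omega \bigl(R^W_{P0^{\tau_h}}(w)(x) - w(x)\bigr)\,\div\phi(x)\,\dd x .
\]
It then suffices to bound the right-hand side by $\eps(\tau_h,h,w)$.

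Here the structure of the lowest-order Raviart--Thomas space is decisive: as recorded in \eqref{eq:TVh_P0h}, $\div\phi$ is constant on every coarse cell $\tilde Q \in \QQ_h$, say $\div\phi|_{\tilde Q} \equiv d_{\tilde Q}$. Splitting the integral over the coarse mesh turns it into $\sum_{\tilde Q \in \QQ_h} d_{\tilde Q}\int_{\tilde Q}\bigl(R^W_{P0^{\tau_h}}(w) - w\bigr)\,\dd x$. Because the fine mesh $\QQ_{\tau_h}$ subdivides each $\tilde Q$ by \cref{assu:grid} and $\Pi_{P0^{\tau_h}}$ acts as the cell-average, it preserves the integral of $w$ over $\tilde Q$, so $\int_{\tilde Q}\bigl(R^W_{P0^{\tau_h}}(w) - w\bigr)\,\dd x = \int_{\tilde Q}\bigl(R^W_{P0^{\tau_h}}(w) - \Pi_{P0^{\tau_h}}w\bigr)\,\dd x$. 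Estimating $|d_{\tilde Q}| \le \tfrac{2d}{h}$ by \cref{lem:div_bound} and passing to absolute values, the whole sum is controlled by $\tfrac{2d}{h}\sum_{\tilde Q}\bigl|\int_{\tilde Q}(R^W_{P0^{\tau_h}}(w) - \Pi_{P0^{\tau_h}}w)\bigr| \le \tfrac{2d}{h}\,\|R^W_{P0^{\tau_h}}(w) - \Pi_{P0^{\tau_h}}w\|_{L^1(\Omega)}$. The last estimate in \cref{lem:estimate_RWP0h} then bounds this $L^1$ norm by $\sqrt d\,\tau_h\,\TV(w)$, giving precisely $\tfrac{2d}{h}\sqrt d\,\tau_h\,\TV(w) = \eps(\tau_h,h,w)$.

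The step I expect to be the crux is the replacement of $w$ by $\Pi_{P0^{\tau_h}}w$ inside each coarse-cell integral. Without it one could only bound $\sum_{\tilde Q}|\int_{\tilde Q}(R^W_{P0^{\tau_h}}(w) - w)|$ by $\|R^W_{P0^{\tau_h}}(w) - w\|_{L^1(\Omega)}$, which carries an extra factor and, more tellingly, would obscure why the $\tfrac{1}{h}$-blow-up of $\div\phi$ is harmless. The combination of the cell-constancy of $\div\phi$ with the average-preserving property of $\Pi_{P0^{\tau_h}}$ is exactly what converts the naive $\mathcal{O}(1/h)$ growth into the controlled ratio $\tau_h/h$, and invoking the sharper estimate on $R^W_{P0^{\tau_h}}(w) - \Pi_{P0^{\tau_h}}w$ rather than on $R^W_{P0^{\tau_h}}(w) - w$ is what pins down the stated constant. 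I would take care to state the average-preservation explicitly, since it is the sole ingredient beyond the cited lemmas, and it is where the embedding hypothesis of \cref{assu:grid} is genuinely used.
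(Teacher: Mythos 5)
Your proposal is correct and follows essentially the same route as the paper's proof: both exploit the cell-constancy of $\div\phi$ on the coarse mesh together with the average-preservation of $\Pi_{P0^{\tau_h}}$ over coarse cells (via \cref{assu:grid}) to reduce the error term to $\|R^W_{P0^{\tau_h}}(w)-\Pi_{P0^{\tau_h}}w\|_{L^1(\Omega)}$, then apply \cref{lem:div_bound} and \cref{lem:estimate_RWP0h}. The only cosmetic difference is that you certify $\TVh(R^W_{P0^{\tau_h}}(w))$ by its maximizer from \cref{lem:TVh} and reuse that field for $\TVh(w)$, whereas the paper bounds an arbitrary test field and takes the supremum at the end.
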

	\begin{proof}
		Let $w \in \BVW(\Omega)$. We define $w^{\tau_h} \coloneqq R^W_{P0^{\tau_h}}(w) \in \BVW(\Omega)$ and $\bar{w}^{\tau_h} \coloneqq \Pi_{P0^{\tau_h}}w \in \BV(\Omega)$. 
		Let $\phi \in RT0_0^h$ with $\|\phi\|_{L^\infty(\Omega;\R^d)} \le 1$. Then
		\begin{align*}
			\int_{\Omega} w^{\tau_h}(x) \dvg \phi(x)\dd x
			& = \int_{\Omega}\bar{w}^{\tau_h}(x) \dvg \phi(x)\dd x + \int_{\Omega} (w^{\tau_h}(x) - \bar{w}^{\tau_h}(x)) \dvg \phi(x) \dd x \\
			& \leq \TVh(w) + \|w^{\tau_h} - \bar{w}^{\tau_h} \|_{L^1(\Omega)} \| \div \phi \|_{L^\infty(\Omega)}.
		\end{align*}
		Here, we have used that the meshes $\QQ_h$ and $\QQ_{\tau_h}$ fulfill \cref{assu:grid}, that $\div \phi$ is
		constant on each $Q \in \QQ_h$, and that $\int_Q w(x) \dd x = \int_Q \bar{w}^{\tau_h}(x) \dd x$ for each $Q \in \QQ_h$
		to deduce $\TVh(w) = \TVh(\bar{w}^{\tau_h})$ in the
			derivation of the inequality.
		By \cref{lem:div_bound}, it follows that $\|\dvg \phi\|_{L^\infty(\Omega)} \le \frac{2d}{h}$, and \cref{lem:estimate_RWP0h} yields $\|w^{\tau_h} - \bar{w}^{\tau_h} \|_{L^1(\Omega)} \leq \sqrt{d} \tau_h \TV(w)$ which implies $\int_{\Omega} w^{\tau_h}(x) \dvg \phi(x)\dd x$ $\le \TVh(w) + \eps(\tau_h,h,w)$.
	\end{proof}
	We are now able to prove the $\limsup$ inequality for $\TVh$.
	\begin{theorem}\label{thm:limsup_TV_TVh}
		Let $w \in L^1_W(\Omega)$ and tuples $\{(h,\tau_h)\}_{h>0}$ be given such that \cref{assu:grid} is fulfilled and $\frac{\tau_h}{h} \searrow 0$ as $h \searrow 0$. Then there holds
		\begin{align*}
			\TV(w) \geq \limsup_{h \searrow 0} \TVh( R^W_{P0^{\tau_h}}(w)).
		\end{align*}
	\end{theorem}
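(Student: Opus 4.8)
The plan is to obtain the statement directly from the per-tuple estimate of \cref{lem:estimate_TVhRP0hTV} by passing to the limit and exploiting the superlinear coupling $\tau_h/h \searrow 0$. I would begin with a case distinction on whether $\TV(w)$ is finite.

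If $\TV(w) = \infty$, there is nothing to show: the right-hand side $\TV(w)$ equals $+\infty$ and hence dominates the $\limsup$ regardless of its value. One only has to note that $R^W_{P0^{\tau_h}}(w)$ remains well defined for $w \in L^1_W(\Omega)$ — the cell averages $\frac{1}{|Q|}\int_Q w\dd x$ are finite and the rounding in \cref{dfn:RWP0tau} selects an attained value — so that $R^W_{P0^{\tau_h}}(w) \in \BVW(\Omega) \cap P0^{\tau_h}$ and $\TVh$ of it is finite by \cref{lem:TVh}; the $\limsup$ on the right is therefore a well-defined extended real bounded above by $+\infty$.

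If $\TV(w) < \infty$, then $w \in \BVW(\Omega)$ and \cref{lem:estimate_TVhRP0hTV} applies to each admissible tuple $(h,\tau_h)$, giving
\[
\TVh(R^W_{P0^{\tau_h}}(w)) \le \TV(w) + \varepsilon(\tau_h,h,w), \qquad \varepsilon(\tau_h,h,w) = \sqrt{d}\,\TV(w)\,\frac{2 d \tau_h}{h}.
\]
Since $\TV(w)$ is a fixed finite constant and $\tau_h/h \searrow 0$ as $h \searrow 0$, the error $\varepsilon(\tau_h,h,w)$ tends to $0$; taking $\limsup_{h \searrow 0}$ on both sides then yields the asserted inequality.

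No real obstacle remains at this stage, as all of the analysis has been front-loaded into \cref{lem:estimate_TVhRP0hTV}, which itself rests on the bound $\|\div\phi\|_{L^\infty(\Omega)} \le 2d/h$ from \cref{lem:div_bound} together with the rate $\|R^W_{P0^{\tau_h}}(w) - \Pi_{P0^{\tau_h}}w\|_{L^1(\Omega)} \le \sqrt{d}\tau_h\TV(w)$ from \cref{lem:estimate_RWP0h}. The point worth stressing is why the superlinear coupling is indispensable: the error scales like $\tau_h/h$, arising as the product of the $O(\tau_h)$ $L^1$-discrepancy between the rounded function and its projection and the $O(1/h)$ blow-up of $\|\div\phi\|_{L^\infty}$ on the coarse $\TV$ mesh. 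A merely linear coupling $\tau_h = c h$ would leave a residual $O(1)$ term, whereas the assumption $\tau_h/h \searrow 0$ is exactly what forces $\varepsilon(\tau_h,h,w) \to 0$.
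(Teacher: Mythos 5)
Your proof is correct and follows essentially the same route as the paper: the paper's proof likewise applies $\limsup_{h \searrow 0}$ to both sides of \eqref{eq:TVh_R_ineq} from \cref{lem:estimate_TVhRP0hTV} and uses $\frac{\tau_h}{h} \searrow 0$ to kill the error term. Your explicit handling of the case $\TV(w) = \infty$ (which the paper leaves implicit, since \cref{lem:estimate_TVhRP0hTV} is stated only for $w \in \BVW(\Omega)$ while the theorem allows $w \in L^1_W(\Omega)$) is a small added point of rigor, not a different approach.
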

	\begin{proof}
		The claim follows from \cref{lem:estimate_TVhRP0hTV} by applying $\limsup_{h \searrow 0}$ to both sides of \eqref{eq:TVh_R_ineq} and using the assumption $\frac{\tau_h}{h} \searrow 0$.
	\end{proof}
	The embedding of the meshes $\QQ_{\tau_h}$ into the $\QQ_h$ and the superlinear coupling of their mesh sizes yield a lower bound estimate for the discretized total variation of the recovery sequence from \cref{thm:limsup_TV_TVh} using techniques from Proposition 3.7  in \cite{Caillaud2020Error}.
	\begin{proposition} \label{prop:TVh_lower_bound}
		Let $w \in \BVW(\Omega)$, $\TV(w) = - \int_\Omega \div \phi(x)  w(x) \dd x$ with $\phi \in W^{1,\infty}_0(\Omega;\R^d)$ and $\| \phi \|_{L^\infty(\Omega;\R^d)} \leq 1$, and let $\{(h,\tau_h)\}_{h >0}$ be coupled such that \cref{assu:grid} is fulfilled. If $\div \phi \in H^1(\Omega)$, then there holds
		\begin{align*}
			\TVh(R^W_{P0^{\tau_h}}(w)) \geq \TV(w) - c(w,\phi) \left(h^2 + h
				+ \frac{\tau_h}{h} \right)
		\end{align*}
		with some constant
		$c(w,\phi)>0$, depending
		on $w$ and $\phi$.
	\end{proposition}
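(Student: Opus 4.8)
The plan is to bound $\TVh(R^W_{P0^{\tau_h}}(w))$ from below in two separate steps that mirror the upper bound of \cref{lem:estimate_TVhRP0hTV} but now produce the error $c(w,\phi)(h^2+h+\tfrac{\tau_h}{h})$. First I would establish $\TVh(R^W_{P0^{\tau_h}}(w)) \ge \TVh(w) - c_1(w)\tfrac{\tau_h}{h}$, and second $\TVh(w) \ge \TV(w) - c_2(w,\phi)(h+h^2)$. Adding the two inequalities and collecting constants into a single $c(w,\phi)>0$ then yields the claim.

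For the first step, let $\phi^h \in RT0^h_0$ with $\|\phi^h\|_{L^\infty(\Omega;\R^d)}\le 1$ be a maximizer of the problem defining $\TVh(w)$, which exists by \cref{lem:TVh} since $w \in \BVW(\Omega)$. As $\phi^h$ is also admissible in the supremum defining $\TVh(R^W_{P0^{\tau_h}}(w))$, I would write
\[
\TVh(R^W_{P0^{\tau_h}}(w)) \ge \int_\Omega R^W_{P0^{\tau_h}}(w)(x)\,\div\phi^h(x)\dd x = \TVh(w) + \int_\Omega \big(R^W_{P0^{\tau_h}}(w)-w\big)(x)\,\div\phi^h(x)\dd x
\]
and bound the remainder by $\|R^W_{P0^{\tau_h}}(w)-w\|_{L^1(\Omega)}\|\div\phi^h\|_{L^\infty(\Omega)}$. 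Here \cref{lem:estimate_RWP0h} gives $\|R^W_{P0^{\tau_h}}(w)-w\|_{L^1(\Omega)}\le 2\sqrt d\,\tau_h\TV(w)$ and \cref{lem:div_bound} gives $\|\div\phi^h\|_{L^\infty(\Omega)}\le \tfrac{2d}{h}$, so the remainder is at most $4d^{3/2}\TV(w)\tfrac{\tau_h}{h}$. I expect this to be the main obstacle: the discrete dual maximizer $\phi^h$ only has its divergence controlled by $\tfrac{2d}{h}$ rather than by an $h$-independent constant, so the $L^1$-closeness of order $\tau_h$ of the rounding to $w$ degrades to order $\tfrac{\tau_h}{h}$. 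This is precisely why the superlinear coupling $\tfrac{\tau_h}{h}\searrow 0$ is needed and cannot be relaxed to $\tau_h \le h$ alone.

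For the second step I would use that, by \eqref{eq:TVh_P0h} together with \cref{assu:grid}, $\TVh$ depends only on the cell averages over $\QQ_h$, so $\TVh(w)=\TVh(\Pi_{P0^h}w)$, which places us exactly in the setting of Proposition 3.7 in \cite{Caillaud2020Error}. Following that technique, I would insert into the supremum for $\TVh(w)$ the normalized Raviart--Thomas interpolant $\tfrac{1}{1+C(\phi)h}\,I_{RT0^h}(-\phi)$ of the continuous dual field, the sign being chosen so that $\int_\Omega w\,\div(-\phi)\dd x = \TV(w)$. Its admissibility follows from the $L^\infty$-estimate \eqref{eq:RT_linf_error_estimate}, and since $\phi \in W^{1,\infty}_0(\Omega;\R^d)$ its normal trace vanishes, so the interpolant lies in $RT0^h_0$. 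Using $\div I_{RT0^h}(-\phi)=-\Pi_{P0^h}\div\phi$ from \eqref{eq:interpolation_div}, the resulting value equals $\tfrac{1}{1+C(\phi)h}\big(\TV(w) + \int_\Omega w(\div\phi-\Pi_{P0^h}\div\phi)\dd x\big)$; the last integral is controlled via Cauchy--Schwarz by $\|w\|_{L^2(\Omega)}\|\div\phi-\Pi_{P0^h}\div\phi\|_{L^2(\Omega)}$, and the divergence-interpolation error estimate \eqref{eq:est_norm_interpolation} bounds this by an $O(h)$ term, \emph{requiring} $\div\phi \in H^1(\Omega)$ — which is exactly where the hypothesis enters — while expanding the normalization factor $\tfrac{1}{1+C(\phi)h}$ supplies the remaining $O(h)$ and $O(h^2)$ corrections. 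Combining this with the first step gives $\TVh(R^W_{P0^{\tau_h}}(w)) \ge \TV(w) - c(w,\phi)(h^2+h+\tfrac{\tau_h}{h})$.
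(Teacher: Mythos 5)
Your proof is correct; it shares its endpoints with the paper's argument but executes the key middle step by a genuinely different and more elementary technique. Both proofs insert the normalized interpolant $\frac{1}{1+C(\phi)h}\,I_{RT0^h}\phi \in RT0^h_0$ (admissible by \eqref{eq:RT_linf_error_estimate} and the vanishing normal trace of $\phi$), and both convert between the rounded and unrounded functions by testing a discrete maximizer, paying $\|\div \cdot\|_{L^\infty(\Omega)} \le \frac{2d}{h}$ times an $L^1$-distance of order $\tau_h$ via \cref{lem:div_bound,lem:estimate_RWP0h}. Where you differ is in comparing $\TV(w)$ with the discretized functional: the paper follows Proposition 3.7 of \cite{Caillaud2020Error} and expands quadratically, writing $\TV(w) = -\frac{1}{2}\|\div\phi+w\|_{L^2(\Omega)}^2 + \frac{1}{2}\|w\|_{L^2(\Omega)}^2 + \frac{1}{2}\|\div\phi\|_{L^2(\Omega)}^2$, then exploits that $\div I_{RT0^h}\phi = \Pi_{P0^h}\div\phi$ (by \eqref{eq:interpolation_div}) and $\Pi_{P0^h}w$ are $L^2$-projections, together with Pythagoras identities; this produces the $h^2$ term from \eqref{eq:est_norm_interpolation} and the $h$ term from $\frac{1}{2}\|w-\Pi_{P0^h}w\|_{L^2(\Omega)}^2$, and the rounding is handled afterwards, between $\TVh(\Pi_{P0^{\tau_h}}w)$ and $\TVh(R^W_{P0^{\tau_h}}(w))$. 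You instead estimate linearly: by Cauchy--Schwarz and \eqref{eq:est_norm_interpolation}, $\left|\int_\Omega w(x)\left(\div\phi - \Pi_{P0^h}\div\phi\right)(x)\dd x\right| \le b\,h\,\|w\|_{L^2(\Omega)}\|\nabla\div\phi\|_{L^2(\Omega;\R^d)}$, and you handle the rounding up front, between $\TVh(w)$ and $\TVh(R^W_{P0^{\tau_h}}(w))$, using the maximizer furnished by \cref{lem:TVh}. This avoids the projection $\Pi_{P0^h}w$, its optimality condition, and the Pythagoras step altogether, and it even yields the marginally stronger bound $c(w,\phi)\left(h+\frac{\tau_h}{h}\right)$ with no genuine $h^2$ contribution (harmless, since the claimed inequality only becomes weaker with the extra term). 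What the paper's quadratic route buys is consistency with the primal--dual error analysis of \cite{Caillaud2020Error}, with constants involving $\max_{w_1,w_2\in W}|w_1-w_2|$ rather than $\|w\|_{L^2(\Omega)}\|\nabla\div\phi\|_{L^2(\Omega;\R^d)}$; for the one-sided estimate claimed in this proposition, your linear argument is fully sufficient.
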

	\begin{proof}
		Let $\phi^h \coloneqq I_{RT0^h} \phi \in RT0^h_0$ for $h >0$. By \eqref{eq:RT_linf_error_estimate}, there holds $\| \phi^h \|_{L^\infty(\Omega)} \leq 1 + C h$ with $C = C(\phi) >0$. We define $\tilde{\phi}^h \coloneqq \frac{1}{1+ Ch} \phi^h  \in RT0^h_0$ for $h >0$, which fulfills $\| \tilde{\phi}^h \|_{L^\infty(\Omega)} \leq 1$. Moreover, we define $w^h = \Pi_{P0^h}w$ for $h >0$, which fulfills $\int_\Omega w(x) w^h(x)\dd x = \| w^h \|_{L^2(\Omega)}^2$ due to the optimality condition of the projection. By \eqref{eq:interpolation_div}, there holds $\div I_{RT0^h} \phi = \Pi_{P0^h} \div \phi$ so that also $\int_\Omega \div \phi(x) \div \phi^h(x) \dd x = \| \div \phi^h \|_{L^2(\Omega)}^2$. Hence, there holds
		\begin{align*}
			&\TV(w) = - \frac{1}{2} \| \div \phi + w \|_{L^2(\Omega)}^2 + \frac{1}{2} \|w \|_{L^2(\Omega)}^2 + \frac{1}{2} \| \div \phi \|_{L^2(\Omega)}^2 \\
			& \leq - \frac{1}{2} \| \div \phi^h +w^h \|_{L^2(\Omega)}^2 + \frac{1}{2} \|w \|_{L^2(\Omega)}^2 + \frac{1}{2} \| \div \phi \|_{L^2(\Omega)}^2 \\
			& = \frac{1}{2} \big( \| \div \phi \|_{L^2(\Omega)}^2-\| \div \phi^h \|_{L^2(\Omega)}^2 + \|w \|_{L^2(\Omega)}^2 - \| w^h \|_{L^2(\Omega)}^2  \big) - \int_\Omega \div \phi^h(x) w^h(x) \dd x \\
			& = \frac{1}{2} \| \div (\phi^h - \phi) \|_{L^2(\Omega)}^2 + \frac{1}{2} \| w - w^h \|_{L^2(\Omega)}^2 - (1+Ch) \int_\Omega \div \tilde{\phi}^h(x) w^h(x) \dd x \\
			& \leq \frac{b^2 h^2}{2} \| \nabla \div \phi \|_{L^2(\Omega;\R^d)}^2  + \frac{1}{2} \| w -w^h \|_{L^1(\Omega)} \| w - w^h \|_{L^\infty(\Omega)} + (1+Ch) \TVh(w^h),
		\end{align*}
		where we used estimate \eqref{eq:est_norm_interpolation} to obtain the last inequality.
		\Cref{lem:estimate_ProjP0h} gives $\| w -w^h \|_{L^1(\Omega)} \leq \sqrt{d} h \TV(w)$. Since $w,w^h \in \BVW(\Omega)$, there holds $\| w-w^h \|_{L^\infty} \leq \max_{w_1,w_2 \in W} |w_1 - w_2 | \eqqcolon W_{\max}$. There holds $\TVh(w^h) = \TVh(\Pi_{P0^{\tau_h}}w)$ due to $\int_Q w^h(x) \dd x$ $= \int_Q \Pi_{P0^{\tau_h}}w(x) \dd x$ for all $Q \in \QQ_h$ by \cref{assu:grid}. By \cref{lem:TVh}, there exists $\hat{\phi}^h \in RT0^h_0$ with $\| \hat{\phi}^h \|_{L^\infty(\Omega)} \leq 1$ so that
		\begin{align*}
			\TVh(\Pi_{P0^{\tau_h}}w) \hspace{-1cm}& \hspace{1cm} = \int_\Omega \div \hat{\phi}^h(x) \Pi_{P0^{\tau_h}} w(x) \dd x \\
			&= \int_\Omega \div \hat{\phi}^h(x) ( \Pi_{P0^{\tau_h}} w(x) - R^W_{P0^{\tau_h}}(w)(x)) \dd x + \int_\Omega \div \hat{\phi}^h(x) R^W_{P0^{\tau_h}}(w) (x)\dd x \\
			& \leq \| \div \hat{\phi}^h \|_{L^\infty(\Omega)} \| \Pi_{P0^{\tau_h}}w - R^W_{P0^{\tau_h}}(w) \|_{L^1(\Omega)} + \TVh(R^W_{P0^{\tau_h}}(w)) \\
			& \leq \frac{4d \sqrt{d} \tau_h}{h} \TV(w) + \TVh(R^W_{P0^{\tau_h}}(w)),
		\end{align*}
		where the last inequality follows from \cref{lem:div_bound,lem:estimate_RWP0h}.
		In total, we obtain
		\begin{multline*}
			\TVh(R^W_{P0^{\tau_h}}(w)) - \TV(w)\\
			\ge - \frac{\left( Ch + \frac{1}{2}W_{\max} \sqrt{d} h + (1+Ch) \frac{4 d \sqrt{d} \tau_h}{h} \right) \TV(w) + \frac{b^2 h^2}{2} \| \nabla \div \phi \|_{L^2(\Omega;\R^d)}^2}{1+Ch}.
		\end{multline*}
	\end{proof}
	\section{Discretization of problem (P)} \label{sec:Gamma-convergence}
	We define the two optimization problems
	\begin{gather}\label{eq:P_c}
		\min_{w \in L^2(\Omega)}\ G(w) \coloneqq F(w) + \alpha \TV(w) + I_Z(w)
		\tag{\text{P$_c$}}
	\end{gather}
	and
	\begin{gather}\label{eq:P_c^h}
		\min_{(w,V) \in L^2(\Omega)\times \R}\ G^h(w,V) \coloneqq F(w) + \alpha V + I_{Z^h}(w,V)\tag{\text{P$^h_c$}}.
	\end{gather}
	The feasible sets are respectively given by $Z \coloneqq \left\{ w \in L^1_W(\Omega) \mid \TV(w) \leq c \TV(w) \right\}$
	and $Z^h \coloneqq \left\{ (w,V) \in (P0^{\tau_h} \cap L^1_W(\Omega)) \times \R \mid \TV(w) \leq c V, \, \TVh(w) \leq V \right\}$
	with a constant $c \geq 1$ and with $\tau_h > 0$ coupled to $h>0$ such that \cref{assu:grid} is fulfilled. The feasibility in \eqref{eq:P_c} and \eqref{eq:P_c^h} is ensured by the $\{0, \infty\}$-valued indicator functionals $I_Z$ and $I_{Z^h}$. The purpose of the constraint $\TV(w) \leq c V$ in \eqref{eq:P_c^h} is to obtain boundedness of the sequence of solutions to \eqref{eq:P_c^h} in $\BV(\Omega)$ which yields compactness in $\BV(\Omega)$. This also yields the existence of minimizers and will be analyzed in \Cref{sec:compactness}. Compared to \eqref{eq:p}, the problem \eqref{eq:P_c} has the additional constraint $\TV(w) \leq c \TV(w)$, which does not change the feasible set.
	\begin{lemma}
		\eqref{eq:P_c} is equivalent to \eqref{eq:p}.
	\end{lemma}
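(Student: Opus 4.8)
The plan is to show that \eqref{eq:p} and \eqref{eq:P_c} have identical feasible sets and identical objective values on those sets, so that the two problems share their minimizers and their optimal value; this is precisely what \emph{equivalent} should mean here. There are exactly three formal differences between the problems: the ambient space ($L^1(\Omega)$ versus $L^2(\Omega)$), the encoding of feasibility (the explicit pointwise constraint $w(x)\in W$ versus the indicator $I_Z$), and the extra inequality $\TV(w)\le c\TV(w)$ hidden in the definition of $Z$. I would dispose of each in turn.

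First I would argue that the constraint defining $Z$ is vacuous. Since $\TV(w)\ge 0$ for every $w$ and $c\ge 1$, we have $c\TV(w)\ge\TV(w)$, where in the case $\TV(w)=\infty$ this is the trivially true statement $\infty\le\infty$ (recall $\TV\colon L^1(\Omega)\to[0,\infty]$). Hence the inequality $\TV(w)\le c\TV(w)$ holds for every $w$, and therefore $Z=L^1_W(\Omega)$, which is exactly the feasible set of \eqref{eq:p}.

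Next I would reconcile the two ambient spaces. Because $W\subset\Z$ is finite, every $w\in L^1_W(\Omega)$ is bounded a.e.\ by $\max_{\omega\in W}|\omega|$, and since $\Omega$ is a bounded domain it has finite Lebesgue measure; thus $L^1_W(\Omega)\subset L^\infty(\Omega)\subset L^2(\Omega)$. Consequently the feasible set $Z=L^1_W(\Omega)$ lies inside $L^2(\Omega)$, and passing from a minimization over $L^1(\Omega)$ to one over $L^2(\Omega)$ discards no feasible point; conversely any $w\in L^2(\Omega)$ with $I_Z(w)<\infty$ lies in $Z=L^1_W(\Omega)$. So the effective domains coincide.

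Finally, on $Z$ the indicator $I_Z$ vanishes, so the objective $F(w)+\alpha\TV(w)+I_Z(w)$ of \eqref{eq:P_c} reduces to $F(w)+\alpha\TV(w)$, which is exactly the objective of \eqref{eq:p}, while off $Z$ it equals $+\infty$ and hence is never optimal. Having matched feasible sets and objectives, I would conclude that $w^\ast$ solves \eqref{eq:p} if and only if it solves \eqref{eq:P_c}, with equal optimal values. The argument is essentially bookkeeping; the only points meriting a moment's care are the embedding $L^1_W(\Omega)\subset L^2(\Omega)$, which uses both the finiteness of $W$ and the finite measure of $\Omega$, and the convention $\infty\le\infty$ needed to treat functions of infinite total variation when checking that the constraint defining $Z$ is vacuous.
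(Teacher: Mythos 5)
Your proof is correct and rests on the same key observation as the paper's: since $c\geq 1$ and $\TV(w)\geq 0$ (with the convention $\infty\leq\infty$), the constraint $\TV(w)\leq c\TV(w)$ is vacuous, so the feasible sets and objectives coincide. The paper's proof is exactly this one sentence; your additional bookkeeping (the embedding $L^1_W(\Omega)\subset L^2(\Omega)$ via finiteness of $W$ and boundedness of $\Omega$, and the indicator-function encoding) is a sound and slightly more careful treatment of details the paper leaves implicit.
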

	\begin{proof}
		Because $c\geq1$ and $\TV(w) \geq 0$ for all $w \in L^1(\Omega)$, the constraint $\TV(w) \leq c \TV(w)$ is trivially fulfilled for all $w \in L^1(\Omega)$.
	\end{proof}
	Instead, the constraint $\TV(w) \leq c \TV(w)$ is used to illustrate that \eqref{eq:P_c} is the limit problem to the family of problems \eqref{eq:P_c^h} when driving $h \searrow 0$ in a $\Gamma$-convergence sense. Specifically, we obtain the following results.
	\begin{theorem}[Lim inf inequality]\label{thm:liminf}
		Let $F: L^1(\Omega) \to \R$ be continuous and bounded from below. Let $G$ and $G^h$ be defined as above with $c\geq1$. Consider tuples $\{(h,\tau_h)\}_{h >0}$ such that \cref{assu:grid} is fulfilled. Let $\{(w^{\tau_h},V^h)\}_{h >0} \subset L^1(\Omega) \times \R$ be a sequence with $w^{\tau_h} \to w$ in $L^1(\Omega)$ as $h \searrow 0$. Then there holds
		\begin{align*}
			G(w) \leq \liminf_{h \searrow 0 } G^h(w^{\tau_h},V^h).
		\end{align*}
	\end{theorem}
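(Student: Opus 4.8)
The plan is to run a standard $\Gamma$-$\liminf$ argument: reduce to a subsequence on which the discretized energies are finite (hence the iterates are feasible), and then treat the three summands $F$, $\alpha V^h$, and the indicator $I_Z$ separately, with the total-variation bound transferred to $w$ through the already-established \cref{thm:TV_liminf}.

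First I would dispose of the trivial case. If $\liminf_{h \searrow 0} G^h(w^{\tau_h},V^h) = +\infty$, there is nothing to prove. Otherwise set $\ell \coloneqq \liminf_{h \searrow 0} G^h(w^{\tau_h},V^h) < \infty$ and pass to a subsequence (not relabeled) along which $G^h(w^{\tau_h},V^h) \to \ell$ with every term finite. Finiteness forces $(w^{\tau_h},V^h) \in Z^h$, that is, $w^{\tau_h} \in P0^{\tau_h} \cap L^1_W(\Omega)$, $\TV(w^{\tau_h}) \le c V^h$, and, crucially, $\TVh(w^{\tau_h}) \le V^h$; moreover on this subsequence $G^h(w^{\tau_h},V^h) = F(w^{\tau_h}) + \alpha V^h$.

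Next I would settle the indicator term and the regularity needed downstream. Since $w^{\tau_h} \in L^1_W(\Omega)$ and $w^{\tau_h} \to w$ in $L^1(\Omega)$, \cref{lem:convergence_in_L^p} yields $w \in L^1_W(\Omega)$ and convergence in every $L^p(\Omega)$, $1 \le p < \infty$; in particular $w \in L^2(\Omega)$, which is exactly the hypothesis required to apply \cref{thm:TV_liminf}. Because $c \ge 1$, the defining inequality of $Z$ is vacuous, so $w \in L^1_W(\Omega)$ already gives $w \in Z$ and $I_Z(w) = 0$. Continuity of $F$ on $L^1(\Omega)$ gives $F(w^{\tau_h}) \to F(w)$, and from $\ell = \lim_{h \searrow 0}[F(w^{\tau_h}) + \alpha V^h]$ together with the convergence of $F(w^{\tau_h})$ it follows that $\alpha V^h$ converges, with $\lim_{h \searrow 0} V^h = (\ell - F(w))/\alpha$.

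The core step, and the only point requiring an idea rather than bookkeeping, is to route the variational information through the auxiliary variable $V^h$ rather than through $w^{\tau_h}$ directly. From $\TVh(w^{\tau_h}) \le V^h$ I obtain $\liminf_{h \searrow 0} \TVh(w^{\tau_h}) \le \liminf_{h \searrow 0} V^h$, while \cref{thm:TV_liminf} gives $\TV(w) \le \liminf_{h \searrow 0} \TVh(w^{\tau_h})$; chaining these yields $\TV(w) \le \liminf_{h \searrow 0} V^h = \lim_{h \searrow 0} V^h$. Combining with the previous paragraph, $G(w) = F(w) + \alpha \TV(w) + I_Z(w) \le F(w) + \alpha \lim_{h \searrow 0} V^h = \ell$, which is the claim. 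I would stress that the feasibility constraint $\TVh(w^{\tau_h}) \le V^h$ is what makes the argument work, whereas the second constraint $\TV(w^{\tau_h}) \le c V^h$ is irrelevant here (it is needed for compactness and existence, analyzed separately); everything else is routine use of subsequences, continuity of $F$, and the discretized $\liminf$ inequality.
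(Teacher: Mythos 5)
Your proposal is correct and follows essentially the same route as the paper's proof: reduce to the case of feasible iterates, obtain $w \in L^1_W(\Omega)$ (hence $w \in L^2(\Omega)$ and $I_Z(w)=0$) via \cref{lem:convergence_in_L^p}, chain $\TV(w) \le \liminf_{h \searrow 0}\TVh(w^{\tau_h}) \le \liminf_{h \searrow 0} V^h$ through \cref{thm:TV_liminf} and the constraint $\TVh(w^{\tau_h}) \le V^h$, and finish with the continuity of $F$. The only difference is cosmetic---you extract a subsequence realizing the $\liminf$ and compute $\lim_{h\searrow 0} V^h$ explicitly, whereas the paper manipulates the $\liminf$s directly---and if anything you are slightly more careful in noting that $w \in L^2(\Omega)$ is the hypothesis needed to invoke \cref{thm:TV_liminf}.
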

	\begin{proof}
		Without loss of generality, we may assume $(w^{\tau_h},V^h) \in Z^h$ for all $h > 0$ because $(w^{\tau_h},V^h) \notin Z^h$ implies the trivial case $G^h(w^{\tau_h},V^h) = \infty$. That is, in particular there holds $w^{\tau_h} \in L^1_W(\Omega)$ and $\TVh(w^{\tau_h}) \leq V^h$ for all $h >0$. By \cref{lem:convergence_in_L^p}, there holds $w \in L^1_W(\Omega)$. Together with \cref{thm:TV_liminf}, there holds $\alpha \TV(w) \leq \liminf_{h \searrow 0} \alpha \TVh(w^{\tau_h}) \leq \liminf_{h \searrow 0} \alpha V^h$.
		
		Again without loss of generality, we may assume that $\liminf_{h \searrow 0} \alpha V^h < \infty$, because that again would imply the trivial case $\liminf_{h \searrow 0} G^h(w^{\tau_h},V^h) = \infty$ since $F$ is bounded from below by assumption.
		Hence, we obtain by \cref{lem:convergence_in_L^p} that $w \in \BVW(\Omega)$ and therefore $w \in Z$, which implies $I_{Z^h}(w^{\tau_h}) = I_Z(w)$ for all $h >0$.
		Because $F: L^1(\Omega) \to \R$ is continuous, there holds $F(w) = \lim_{h \searrow 0} F(w^{\tau_h})$. In total, we obtain $G(w) \leq \liminf_{h \searrow 0} G^h(w^{\tau_h},V^h)$.
	\end{proof}
	\begin{theorem}[Lim sup inequality] \label{thm:limsup_inequality}
		Let $d \in \{1,2,3\}$ and $F: L^1(\Omega) \to \R$ be continuous and bounded from below. Let $c \geq 1$ in the case $d=1$, $c \geq \sqrt{2}$ in the case $d = 2$, and $c \geq 13 \sqrt{3}$ in the case $d=3$. Let tuples $\{(h,\tau_h)\}_{h >0}$ be given such that \cref{assu:grid} is fulfilled and $\frac{\tau_h}{h} \searrow 0$ as $h \searrow 0$. Let $w \in \BV_W(\Omega)$. Then there exists a sequence $\{(w^{\tau_h},V^h)\}_{h > 0}  \subset \BVW(\Omega) \times \R$ such that $(w^{\tau_h},V^h) \in Z^h$, $w^{\tau_h} \weakstarto w$ in $\BV(\Omega)$, and
		\begin{align*}
			G(w) \geq \limsup_{h \searrow 0} G^h(w^{\tau_h},V^h).
		\end{align*}
	\end{theorem}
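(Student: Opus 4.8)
The plan is to use the rounded recovery sequence $w^{\tau_h} \coloneqq R^W_{P0^{\tau_h}}(w)$ from \cref{dfn:RWP0tau}, which by \cref{lem:estimate_RWP0h} already satisfies $w^{\tau_h} \in \BVW(\Omega) \cap P0^{\tau_h}$ and $w^{\tau_h} \to w$ in $L^1(\Omega)$, and to pair it with the smallest admissible value variable $V^h \coloneqq \max\{\TVh(w^{\tau_h}),\, \tfrac{1}{c}\TV(w^{\tau_h})\}$. By construction $\TVh(w^{\tau_h}) \le V^h$ and $\TV(w^{\tau_h}) \le c V^h$, so $(w^{\tau_h},V^h) \in Z^h$ and $I_{Z^h}(w^{\tau_h},V^h)=0$. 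Since $w \in \BVW(\Omega)$ lies trivially in $Z$, we have $G(w) = F(w)+\alpha\TV(w)$, and continuity of $F$ with $w^{\tau_h}\to w$ in $L^1(\Omega)$ gives $F(w^{\tau_h}) \to F(w)$. Thus everything reduces to proving $\limsup_{h\searrow0} V^h \le \TV(w)$: once this holds, $\limsup_h G^h(w^{\tau_h},V^h) \le F(w)+\alpha\TV(w) = G(w)$, and the uniform bound on $\TV(w^{\tau_h})$ established below, together with the $L^1$-convergence, also yields $w^{\tau_h}\weakstarto w$ in $\BV(\Omega)$.

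For the bound on $V^h$, the $\TVh$-part is already handled: \cref{thm:limsup_TV_TVh} gives $\limsup_h \TVh(w^{\tau_h}) \le \TV(w)$. The entire difficulty therefore concentrates in the \emph{continuous} total variation of the rounded function, i.e.\ in establishing, as $\tau_h \to 0$,
\[
\limsup_{h\searrow0}\TV\big(R^W_{P0^{\tau_h}}(w)\big) \le c_d\,\TV(w), \qquad c_d \in \{1,\ \sqrt2,\ 13\sqrt3\}\ \text{ for } d\in\{1,2,3\}.
\]
Since the stated thresholds satisfy $c \ge c_d$, this single estimate simultaneously supplies $\limsup_h \tfrac1c\TV(w^{\tau_h}) \le \TV(w)$ and the uniform $\BV$-bound needed for weak-$*$ convergence.

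To prove it I would exploit that $w^{\tau_h}$ is piecewise constant on the cubic mesh $\QQ_{\tau_h}$, so its derivative measure is carried by cell faces and splits along coordinate directions, $\TV(w^{\tau_h}) = \sum_{i=1}^d |D_i w^{\tau_h}|(\Omega)$, each summand being a face sum $\tau_h^{d-1}\sum_E |w^{\tau_h}_{E^+}-w^{\tau_h}_{E^-}|$. The strategy is to compare these discrete jumps to $w$ direction by direction: decomposing into columns of cells parallel to $e_i$ (equivalently, via the coarea formula over the superlevel sets $\{w^{\tau_h}\ge s\}$, which are unions of cells), one charges each discrete transition to the variation of $w$ in the adjoining cells. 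Summing over directions produces the \emph{anisotropic} ($\ell^1$) total variation of a function comparable to $w$, and $\|\nu\|_1 \le \sqrt d\,\|\nu\|_2$ on the measure-theoretic normal converts this back into $\TV(w)$, yielding the factor $\sqrt d$; in $d=1$ the rounding is monotone along the single direction and cannot increase the variation, giving $c_1=1$, while in $d=2$ the column/coarea bookkeeping is essentially tight and gives $c_2=\sqrt2$.

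The main obstacle is precisely that $R^W_{P0^{\tau_h}}$ is defined through the \emph{$d$-dimensional} cell average together with the attained-value restriction, so it does \emph{not} act slicewise: a cell's value is not determined by any single line through it. Reconciling the column/coarea estimate with this genuinely $d$-dimensional rounding rule is what forces a crude combinatorial overcounting in $d=3$ and is the source of the large constant $13\sqrt3$; the delicate steps are bounding how many neighbouring cubes can register a full-face discrete jump while $w$ carries only little perimeter in that region, and absorbing the resulting mismatch into a term that vanishes as $\tau_h\to0$. Once the displayed estimate is secured, combining it with \cref{thm:limsup_TV_TVh} gives $\limsup_h V^h \le \TV(w)$ and closes the argument exactly as described in the first paragraph.
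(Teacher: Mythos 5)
Your outer reduction is sound, and in fact your choice $V^h \coloneqq \max\{\TVh(w^{\tau_h}),\,\tfrac{1}{c}\TV(w^{\tau_h})\}$ is cleaner than the paper's construction (which patches $V^h \coloneqq \TVh(w^{\tau_h})+\delta_1^h/c+\delta_2^h$ with correction terms and verifies $\TV(w^{\tau_h})\le cV^h$ via \cref{thm:TV_liminf}): feasibility is automatic, $\limsup_h V^h \le \TV(w)$ follows from the two limsup bounds, and the uniform $\TV$ bound plus $L^1$-convergence gives $w^{\tau_h}\weakstarto w$. The problem is that the entire weight of the theorem then rests on the displayed estimate $\limsup_{h}\TV(R^W_{P0^{\tau_h}}(w)) \le c_d\,\TV(w)$, and for this you offer only a sketch whose central obstacle you yourself flag as unresolved ("does not act slicewise"). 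That is not a proof, and it is exactly where the paper invests all of its technical effort.

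Concretely, for $d=2$ your claim is strictly stronger than anything the paper proves: the paper establishes the $\sqrt{2}$ bound for the direct rounding only when the level sets of $w$ have \emph{polygonal} boundaries (\cref{thm:TV_polygonal}), and for general $w \in \BVW(\Omega)$ its recovery sequence is \emph{not} $R^W_{P0^{\tau_h}}(w)$ at all --- it first approximates $w$ strictly by polygonal functions (\cref{lem:approx_polygonal}) and then takes a diagonal sequence of roundings of those approximants (\cref{thm:tvh_tv_d=2}). Your column/coarea plan applied directly to $w$ would have to handle arbitrary countably rectifiable jump sets (blow-up arguments, control near non-flat points, and the genuinely $d$-dimensional attained-value rounding); asserting that the mismatch can be "absorbed into a term that vanishes as $\tau_h\to 0$" is precisely the missing argument, and the polygonal-approximation detour exists because this is hard to do directly. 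For $d=3$, the constant $13\sqrt{3}$ is likewise not derived: in the paper it arises as $(4d+1)\sqrt{d}$ from a dual construction (\cref{thm:TV_estimate_d23}) in which a test field in $H_0(\div;\Omega)$ built from facet-supported bumps realizes $\TV(R^W_{P0^\tau}(w))/\sqrt{d}$ exactly and is then used as a competitor for $\TV(w)$, together with \cref{lem:estimate_RWP0h} and the $L^\infty$ bound on its divergence --- a per-$\tau$ estimate, no limit needed. Had you cited that theorem and the one-dimensional lemma, your argument would close for $d\in\{1,3\}$; but the case $d=2$ with the sharp threshold $c\ge\sqrt{2}$ genuinely requires the paper's density-plus-diagonal-sequence route (or an equally substantial substitute), so as written the proposal has a real gap.
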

	The proof of \cref{thm:limsup_inequality} is provided in \cref{sec:limsup}. In combination with the aforementioned compactness arguments, \cref{thm:liminf,thm:limsup_inequality} yield that minimizers of \eqref{eq:P_c^h} converge to a minimizer of \eqref{eq:P_c} when driving $h \searrow0$, which is the main result of this section.
	\begin{theorem}\label{thm:convergence_to_minimizer}
		Let $d \in \{1,2,3\}$ and $F: L^1(\Omega) \to \R$ be continuous and bounded from below. Let $c \geq 1$ for the case $d=1$, $c \geq \sqrt{2}$ for the case $d=2$, and $c \geq 13\sqrt{3}$ for the case $d=3$. Let tuples $\{(h,\tau_h)\}_{h >0}$ be coupled such that \Cref{assu:grid} is fulfilled and $\frac{\tau_h}{h} \searrow 0$ as $h \searrow 0$. Denote by $\{(w^{\tau_h},V^h)\}_{h >0} \subset \BVW(\Omega) \times \R$ with $w^{\tau_h} \in P0^{\tau_h}$ a sequence of optimal solutions to \eqref{eq:P_c^h}. Then $\{w^{\tau_h}\}_{h >0}$ admits a subsequence that converges weakly-$*$ in $\BV(\Omega)$ and each accumulation point of $\{w^{\tau_h} \}_{h>0}$ is a minimizer of \eqref{eq:P_c}.
	\end{theorem}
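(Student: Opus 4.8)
The plan is to invoke the fundamental theorem of $\Gamma$-convergence: since the lower and upper bound inequalities are already available from \cref{thm:liminf,thm:limsup_inequality}, it remains to establish compactness of the sequence of minimizers and then combine the three ingredients in the standard way. Throughout, the hypotheses on the constant $c$ enter only through the invocation of \cref{thm:limsup_inequality}, whose recovery sequences must be feasible for $Z^h$.

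First I would establish a uniform bound on $\{w^{\tau_h}\}_{h>0}$ in $\BV(\Omega)$. The integrality constraint $w^{\tau_h}\in L^1_W(\Omega)$ immediately yields $\|w^{\tau_h}\|_{L^\infty(\Omega)}\le\max_{\omega\in W}|\omega|$ and hence a uniform $L^1(\Omega)$ bound. To bound the total variation, I would fix a reference function $\bar w\in\BVW(\Omega)$ chosen independently of any minimizer of \eqref{eq:P_c} (a constant function with value in $W$ suffices, so that $\BVW(\Omega)\neq\emptyset$) and apply \cref{thm:limsup_inequality} to obtain a recovery sequence $\{(\hat w^{\tau_h},\hat V^h)\}_{h>0}\subset Z^h$ with $\limsup_{h\searrow0}G^h(\hat w^{\tau_h},\hat V^h)\le G(\bar w)<\infty$. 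Optimality of $(w^{\tau_h},V^h)$ gives $G^h(w^{\tau_h},V^h)\le G^h(\hat w^{\tau_h},\hat V^h)$, whence $\limsup_{h\searrow0}\bigl(F(w^{\tau_h})+\alpha V^h\bigr)\le G(\bar w)$. Since $F$ is bounded from below and $\alpha>0$, this bounds $V^h$ from above for all sufficiently small $h$, and the feasibility constraint $\TV(w^{\tau_h})\le cV^h$ then furnishes a uniform bound on $\TV(w^{\tau_h})$. The uniform $\BV(\Omega)$ bound together with the compact embedding $\BV(\Omega)\hookrightarrow\hookrightarrow L^1(\Omega)$ yields a subsequence with $w^{\tau_h}\weakstarto w^*$ in $\BV(\Omega)$ and $w^{\tau_h}\to w^*$ in $L^1(\Omega)$, and by \cref{lem:convergence_in_L^p} the limit satisfies $w^*\in L^1_W(\Omega)$.

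Next I would prove optimality of every such accumulation point $w^*$. Let $v\in L^2(\Omega)$ be an arbitrary competitor; it suffices to treat $v$ with $G(v)<\infty$, which forces $v\in\BVW(\Omega)$, since every other point has $G(v)=\infty$ and then $G(w^*)\le G(v)$ holds trivially. By \cref{thm:limsup_inequality} there is a recovery sequence $\{(\tilde w^{\tau_h},\tilde V^h)\}_{h>0}\subset Z^h$ with $\tilde w^{\tau_h}\weakstarto v$ and $\limsup_{h\searrow0}G^h(\tilde w^{\tau_h},\tilde V^h)\le G(v)$. Optimality of $(w^{\tau_h},V^h)$ gives $G^h(w^{\tau_h},V^h)\le G^h(\tilde w^{\tau_h},\tilde V^h)$ for every $h$, and applying \cref{thm:liminf} to the convergent subsequence $w^{\tau_h}\to w^*$ yields
\begin{align*}
G(w^*) &\le \liminf_{h\searrow0}G^h(w^{\tau_h},V^h) \le \liminf_{h\searrow0}G^h(\tilde w^{\tau_h},\tilde V^h)\\
&\le \limsup_{h\searrow0}G^h(\tilde w^{\tau_h},\tilde V^h) \le G(v).
\end{align*}
Since $v$ was arbitrary, $w^*$ minimizes \eqref{eq:P_c}; in particular $w^*\in\BVW(\Omega)$ because the middle terms are finite.

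I expect the main obstacle to be the compactness step rather than the sandwich argument: it is exactly here that the problem-specific constraint $\TV(w)\le cV$ and the integrality of $w^{\tau_h}$ must be used jointly---the former to convert the bound on the objective values into a bound on $\TV(w^{\tau_h})$, the latter to supply the $L^\infty$/$L^1$ control that $\TVh$ alone cannot provide, as $\TVh$ admits chattering functions of vanishing discretized total variation (cf.\ \cref{expl:existence_of_minimizers}). Care is also needed to keep the reference competitor $\bar w$ in the a priori bound independent of the not-yet-established minimizer of \eqref{eq:P_c}, so that the compactness argument is not circular.
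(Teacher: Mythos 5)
Your proof is correct, and its overall architecture (compactness of the minimizers plus the $\liminf$/$\limsup$ sandwich) is the same as the paper's; the optimality part in particular is essentially identical to the paper's proof of \cref{thm:convergence_to_minimizer}. The one place where you take a genuinely different route is the a priori bound on $V^h$. The paper isolates this in \cref{lem:minimizer_bounded} and proves it in a more elementary way: for any $w_1 \in W$ the constant function $\tilde w \equiv w_1$ paired with $V=0$ is feasible for \eqref{eq:P_c^h} for \emph{every} $h>0$, since $\TV(\tilde w) = \TVh(\tilde w) = 0$, so optimality immediately gives $F(w^{\tau_h}) + \alpha V^h \le F(\tilde w)$ and hence $V^h \le \tfrac{1}{\alpha}(F(\tilde w) - \ell)$ and $\TV(w^{\tau_h}) \le c V^h \le \tfrac{c}{\alpha}(F(\tilde w)-\ell)$ uniformly in $h$, with no reference to the recovery-sequence machinery and no use of the dimension-dependent lower bounds on $c$. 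You instead obtain the bound by applying \cref{thm:limsup_inequality} to a constant reference function $\bar w$; this is logically sound (and your remark about keeping $\bar w$ independent of any minimizer of \eqref{eq:P_c} correctly avoids circularity), but it invokes the full recovery-sequence construction --- and therefore the hypotheses $c \ge \sqrt{2}$, resp.\ $c \ge 13\sqrt{3}$ --- where a trivial feasible point suffices, and it yields the bound only for sufficiently small $h$ rather than for all $h>0$. Since accumulation points are taken as $h \searrow 0$, this weaker bound still suffices, so there is no gap; the paper's version is simply leaner and makes transparent that compactness of the minimizers is independent of the $\Gamma$-$\limsup$ analysis. Both arguments then conclude via \cref{lem:converging_subsequence} and the same chain $G(w^*) \le \liminf_h G^h(w^{\tau_h},V^h) \le \limsup_h G^h(\tilde w^{\tau_h},\tilde V^h) \le G(v)$.
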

	Next we provide the compactness arguments and the proof of \cref{thm:convergence_to_minimizer}.
	\subsection{Existence of minimizers and compactness}\label{sec:compactness}
	In order to guarantee the existence of subsequences of sequences of optimal solutions to the problems \eqref{eq:P_c^h} that converge in $L^1_W(\Omega)$ as $h \searrow 0$, we have implemented compactness with the help of the constraint $\TVh(w) \leq c V$ in the sense of the following result, which follows from Theorem 3.23 in \cite{ambrosio2000functions} since $\Omega$ is a bounded Lipschitz domain.
	\begin{lemma} \label{lem:converging_subsequence}
		Let $\{u^h\}_{h >0} \subset \BV(\Omega)$ be bounded in $\BV(\Omega)$. Then $\{u^h\}_{h>0}$ admits a subsequence converging weakly-$*$ in $\BV(\Omega)$ to some $u \in \BV(\Omega)$.
	\end{lemma}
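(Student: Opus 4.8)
The plan is to reduce the claim to the standard $\BV$ compactness theorem, whose substance is the compact embedding $\BV(\Omega) \hookrightarrow L^1(\Omega)$ together with the weak-$*$ sequential compactness of the associated gradient measures. First I would unpack what boundedness in $\BV(\Omega)$ means, namely $\sup_{h>0} \|u^h\|_{L^1(\Omega)} < \infty$ and $\sup_{h>0}\TV(u^h) < \infty$. Since $\Omega$ is a bounded Lipschitz domain, it is an extension domain, so the embedding $\BV(\Omega) \hookrightarrow L^1(\Omega)$ is compact; consequently the bounded sequence $\{u^h\}_{h>0}$ admits a subsequence (which I do not relabel) converging strongly in $L^1(\Omega)$ to some $u \in L^1(\Omega)$.

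Next I would establish that $u \in \BV(\Omega)$ and extract convergence of the gradient measures. Membership $u \in \BV(\Omega)$ follows from lower semicontinuity of $\TV$ with respect to $L^1$-convergence: each functional $v \mapsto \int_\Omega v\,\dvg\phi\dd x$ with $\phi \in C_c^1(\Omega;\R^d)$, $\|\phi\|_{L^\infty(\Omega;\R^d)} \le 1$, is $L^1$-continuous because $\dvg\phi$ is bounded, so their supremum $\TV$ is $L^1$-lower semicontinuous, giving $\TV(u) \le \liminf_{h\searrow 0}\TV(u^h) < \infty$. For the measures, I would observe that the $\R^d$-valued measures $Du^h$ satisfy $|Du^h|(\Omega) = \TV(u^h)$, so they are uniformly bounded in the space $\mathcal{M}(\Omega;\R^d)$ of finite Radon measures, which is the dual of $C_0(\Omega;\R^d)$. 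By sequential Banach--Alaoglu, I would pass to a further subsequence with $Du^h \weakstarto \mu$ for some $\mu \in \mathcal{M}(\Omega;\R^d)$.

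Finally I would identify $\mu$ with $Du$. For any $\phi \in C_c^1(\Omega;\R^d)$, the definition of the distributional gradient gives $\int_\Omega u^h\,\dvg\phi\dd x = -\int_\Omega \phi\cdot\dd Du^h$; the left-hand side converges to $\int_\Omega u\,\dvg\phi\dd x$ by the $L^1$-convergence $u^h \to u$, while the right-hand side converges to $-\int_\Omega \phi\cdot\dd\mu$ by the weak-$*$ convergence $Du^h \weakstarto \mu$. Hence $\int_\Omega u\,\dvg\phi\dd x = -\int_\Omega \phi\cdot\dd\mu$ for all such $\phi$, which is exactly the statement $Du = \mu$. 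Combined with $u^h \to u$ in $L^1(\Omega)$, this is precisely weak-$*$ convergence $u^h \weakstarto u$ in $\BV(\Omega)$.

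I expect the only genuinely nontrivial ingredient to be the compact embedding $\BV(\Omega) \hookrightarrow L^1(\Omega)$, which hinges on the domain regularity and is not true for arbitrary open sets; since $\Omega$ is a bounded Lipschitz domain this is furnished directly by Theorem 3.23 in \cite{ambrosio2000functions}. The remaining steps—lower semicontinuity and the duality identification of the limit measure—are routine, so in practice the lemma is essentially a restatement of the cited compactness theorem specialized to our domain.
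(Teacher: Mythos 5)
Your proposal is correct and takes essentially the same route as the paper: the paper proves this lemma purely by citing Theorem 3.23 in \cite{ambrosio2000functions} (applicable because the bounded Lipschitz domain $\Omega$ is an extension domain), which is exactly the compactness ingredient you invoke. The extra steps you write out---lower semicontinuity of $\TV$, Banach--Alaoglu for the gradient measures, and identification of the limit measure with $Du$---are a self-contained rederivation of the weak-$*$ statement that the cited theorem already contains, so there is no substantive difference.
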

	Before continuing with the statements and proofs, we illustrate the effect of absence and presence of the constraints $\TV(w) \leq cV$ in \eqref{eq:P_c^h} as $h \searrow 0$ in an extension of \cref{expl:existence_of_minimizers}.
	\begin{remark}
		In \Cref{expl:existence_of_minimizers}, we stated that the replacement of $\TV$ by $\TVh$ in \eqref{eq:p} causes that the existence of minimizers is no longer guaranteed since the null space of $\TVh$ is greater than the null space of $\TV$.	
		If we additionally demand $w \in P0^{\tau_h}$
		we can now guarantee the existence of minimizers to the resulting problem because of its finite dimension. However, we now have the issue that the minimizers might not converge to a minimizer of \eqref{eq:p} when $h$ is driven to zero if we couple the mesh sizes $\tau_h$ and $h$ superlinearly as required for the recovery sequence according to \cref{thm:limsup_TV_TVh}. This coupling allows for chattering of the sequence of minimizers without affecting the discretized total variation on the coarser mesh.
		The constraint $\TV(w) \leq c V$ from \eqref{eq:P_c^h} prevents this chattering because it bounds the total variation of the minimizers since the function $F$ is bounded from below and hence yields weak-$*$ convergence in $\BV(\Omega)$ of a subsequence as stated in \cref{lem:converging_subsequence}.
	\end{remark}
	\begin{theorem}\label{thm:P^h_admits_solution}
		Let $F: L^1(\Omega) \to \R$ be continuous and bounded from below and let a tuple $(h,\tau_h)$ be given such that \Cref{assu:grid} is fulfilled. Then problem \eqref{eq:P_c^h} admits a solution.
	\end{theorem}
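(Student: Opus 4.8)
Proof plan for Theorem 1.13 (existence of a minimizer for (P$^h_c$)).

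The plan is to invoke the direct method of the calculus of variations on the finite-dimensional problem obtained after fixing the mesh. First I would argue that it suffices to work over the feasible set $Z^h$, since outside $Z^h$ the objective $G^h$ is $+\infty$; thus I must show that $Z^h$ is nonempty, that the infimum over $Z^h$ is finite, and that a minimizing sequence has a convergent subsequence whose limit lies in $Z^h$ and attains the infimum. Nonemptiness follows because $w\equiv\omega_0$ for any fixed $\omega_0\in W$ is piecewise constant on $\QQ_{\tau_h}$, lies in $L^1_W(\Omega)$, has $\TV=0$, and the pair $(w,V)$ with $V\coloneqq\TVh(w)$ satisfies both constraints $\TV(w)\le cV$ and $\TVh(w)\le V$ (here $c\ge1$); finiteness of the infimum follows since $F$ is bounded from below and $\alpha V\ge0$.

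\medskip

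Next I would set up the compactness argument. The key structural observation is that the spatial variable $w$ ranges over $P0^{\tau_h}\cap L^1_W(\Omega)$, which for a \emph{fixed} mesh $\QQ_{\tau_h}$ is a finite set: there are only finitely many cells and each cell carries a value in the finite set $W$. Hence the $w$-component of any minimizing sequence takes values in a finite set and, passing to a subsequence, is eventually constant equal to some $\bar w\in P0^{\tau_h}\cap L^1_W(\Omega)$. It remains to control the scalar component $V^h$. Along this subsequence the constraints force $V^h\ge\TVh(\bar w)\ge0$ and, since $G^h(\bar w,V^h)=F(\bar w)+\alpha V^h$ with $F(\bar w)$ a fixed finite number, minimizing $G^h$ over admissible $V^h$ reduces to minimizing $\alpha V^h$ subject to $V^h\ge\max\{\TVh(\bar w),\TV(\bar w)/c\}$. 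Because $\alpha>0$, the optimal choice is $V^h=\max\{\TVh(\bar w),\TV(\bar w)/c\}$, which is attained and finite (note $\TV(\bar w)<\infty$ as $\bar w\in\BVW(\Omega)$, and $\TVh(\bar w)<\infty$ by \cref{lem:TVh}).

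\medskip

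Finally I would assemble these pieces: the minimizing sequence selects, after passing to a subsequence, a single feasible spatial value $\bar w$, and the corresponding optimal $V$ is explicitly given and attained, so the pair $(\bar w,V)$ is feasible and realizes the infimum of $G^h$. I do not anticipate a genuinely hard step here; the argument is essentially that a continuous function on a product of a finite set with a closed half-line bounded below attains its minimum. The one point requiring a little care is verifying that the two constraints defining $Z^h$ are simultaneously satisfiable for the chosen $\bar w$ and that the reduced scalar problem in $V$ has a finite, attained minimizer; both reduce to the elementary observation that $\max\{\TVh(\bar w),\TV(\bar w)/c\}$ is a finite lower bound that is admissible as a value of $V$, so the objective is coercive in $V$ and its minimum is attained at this lower bound.
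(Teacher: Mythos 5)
Your proof is correct and rests on exactly the same two ingredients as the paper's: the finiteness of $P0^{\tau_h}\cap \BVW(\Omega)$ (finitely many cells, each carrying a value from the finite set $W$) and the observation that for fixed $w$ the optimal scalar is $V=\max\{\tfrac{1}{c}\TV(w),\TVh(w)\}$, which is finite and feasible. The paper merely packages this as an equivalent reformulation — minimizing $F(w)+\alpha\max\{\tfrac{1}{c}\TV(w),\TVh(w)\}$ over the finite set $\BVW(\Omega)\cap P0^{\tau_h}$ — rather than running your minimizing-sequence argument, but the mathematical content is identical.
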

	\begin{proof}
		Problem \eqref{eq:P_c^h} has the same optimal value as the optimization problem
		\begin{gather}\label{eq:tilde{P}^h}
			\begin{aligned}
				\min_{w \in \BVW(\Omega)\cap P0^{\tau_h}}\ & F(w) + \alpha \max\left\{\frac{1}{c} \TV(w), \TVh(w) \right\}.
			\end{aligned}\tag{\text{$\mathrm{\tilde{P}^h}$}}
		\end{gather}
		Since the number of elements in $\BVW(\Omega) \cap P0^{\tau_h}$ is finite, problem \eqref{eq:tilde{P}^h} admits an optimal solution which we denote by $\bar{w}$. Define $\bar{V} \coloneqq \max \left\{ \frac{1}{c} \TV(\bar{w}), \TVh(\bar{w}) \right\}$, then $(\bar{w},\bar{V})$ is optimal for \eqref{eq:P_c^h}.
	\end{proof}
	\begin{remark}
		We could also prove the existence of minimizers of \eqref{eq:P_c^h} without the discretization of $w$, that is, with $Z^h \coloneqq \left\{ w \in L^1_W(\Omega) \mid  \TV(w) \leq c V, \, \TVh(w) \leq V \right\}$ because the constraint $\TV(w) \leq cV$ bounds minimizing sequences in $\BV(\Omega)$. This yields the existence of a subsequence that converges weakly-$*$ in $\BV(\Omega)$ by \cref{lem:converging_subsequence}.
	\end{remark}
	\begin{lemma} \label{lem:minimizer_bounded}
		Let $\{(h,\tau_h)\}_{h >0}$ be coupled such that \cref{assu:grid} is fulfilled and let $(w^{\tau_h},V^h)$ denote an optimal solution to \eqref{eq:P_c^h}. Then $\{(w^{\tau_h},V^h)\}_{h > 0}$ is bounded in $\BV(\Omega) \times \R$.
	\end{lemma}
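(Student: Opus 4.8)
The plan is to bound the three contributions to the norm on $\BV(\Omega) \times \R$ separately: the $L^1$-norm of $w^{\tau_h}$, its total variation $\TV(w^{\tau_h})$, and the real number $V^h$. The $L^1$-bound is immediate from integrality: since $w^{\tau_h} \in L^1_W(\Omega)$ takes values only in the finite set $W$, we have $\|w^{\tau_h}\|_{L^1(\Omega)} \leq |\Omega| \max_{\omega \in W} |\omega|$ uniformly in $h$. The remaining two quantities will be controlled through optimality together with the feasibility constraints defining $Z^h$.

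The key step I would carry out is to compare the optimal value against a fixed, mesh-independent feasible point. Fixing any $\omega_0 \in W$ and setting $w_0 \equiv \omega_0$, the constant function lies in $P0^{\tau_h} \cap L^1_W(\Omega)$ for every $h$ and satisfies $\TV(w_0) = \TVh(w_0) = 0$, so $(w_0,0) \in Z^h$. Optimality of $(w^{\tau_h},V^h)$ for \eqref{eq:P_c^h} then yields
\[
F(w^{\tau_h}) + \alpha V^h \leq F(w_0).
\]
Because $V^h \geq \TVh(w^{\tau_h}) \geq 0$ by feasibility and $F$ is bounded from below, say by $\underline{F} \in \R$, this gives $0 \leq \alpha V^h \leq F(w_0) - \underline{F}$, a uniform bound on $V^h$.

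Finally I would transfer this bound to the total variation: the feasibility constraint $\TV(w^{\tau_h}) \leq c V^h$ in $Z^h$ immediately yields $\TV(w^{\tau_h}) \leq c (F(w_0) - \underline{F})/\alpha$, uniformly in $h$. Combining this with the $L^1$-bound and the bound on $V^h$ shows that $\{(w^{\tau_h},V^h)\}_{h>0}$ is bounded in $\BV(\Omega) \times \R$.

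I do not expect a genuine obstacle here; the argument is elementary once one identifies the right comparison. The only points requiring care are to use the constraint $\TV(w) \leq cV$ (rather than $\TVh(w) \leq V$) in order to bound the \emph{genuine} total variation entering the $\BV$-norm, and to observe that the constant reference function is admissible on every mesh, so that the resulting upper bound $F(w_0)$ on the optimal value is independent of $h$.
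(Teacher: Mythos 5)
Your proposal is correct and follows essentially the same argument as the paper: compare the optimal value against the constant feasible point $(\tilde{w},0)$ with $\tilde{w}\equiv w_1\in W$, use the lower bound on $F$ to bound $V^h$, transfer this to $\TV(w^{\tau_h})$ via the constraint $\TV(w)\le cV$, and obtain the $L^1$-bound from the pointwise restriction to the finite set $W$. Your explicit remark that $V^h\ge\TVh(w^{\tau_h})\ge 0$ by feasibility is a minor (and welcome) extra detail that the paper leaves implicit.
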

	\begin{proof}
		The functional $F$ is bounded from below, that is, there exists a constant $\ell \in \R$ such that $F(w) \geq \ell$ for all $w \in L^1(\Omega)$. Define $\tilde{w} \equiv w_1$ for some $w_1 \in W$. Then $(\tilde{w},0)$ is feasible for \eqref{eq:P_c^h} for all $h >0$ with objective value $G^h(\tilde{w},0) = F(\tilde{w})$ since $\TVh(\tilde{w}) = \TV(\tilde{w})= 0$. Now let $(w^{\tau_h},V^h)$ be optimal for \eqref{eq:P_c^h} for $h >0$. Then $F(w^{\tau_h}) + \alpha V^h \leq F(\tilde{w})$ and therefore $V^h \leq \frac{1}{\alpha}(F(\tilde{w})-\ell)$ for all $h>0$. This yields that $\TV(w^{\tau_h}) \leq c V^h \leq \frac{c}{\alpha}(F(\tilde{w})-\ell)$.
		The boundedness in $L^1(\Omega)$ follows directly from the boundedness in $L^\infty(\Omega)$, which is due to $w^{\tau_h}(x) \in W$ for a.a.\ $x \in \Omega$.
	\end{proof}
	\begin{proof}[Proof of \cref{thm:convergence_to_minimizer}]
		By \Cref{lem:minimizer_bounded,lem:converging_subsequence}, the sequence $\{w^{\tau_h}\}_{h >0}$ admits a subsequence that converges weakly-$*$ in $\BV(\Omega)$ to some $w \in \BV(\Omega)$ which we denote by the same symbol, that is, $w^{\tau_h} \weakstarto w$ in $\BV(\Omega)$. By \Cref{thm:liminf}, there holds $G(w) \leq \liminf_{h \searrow 0} G^h(w^{\tau_h},V^h)$.
		By \Cref{thm:limsup_inequality}, there exists for each $(v,U) \in \BVW(\Omega) \times \R$ a sequence $\{(v^h,U^h)\}_{h >0}$ $\subset \BVW(\Omega) \times \R$ such that $v^h \in P0^{\tau_h}$, $v^h \weakstarto v$ in $\BV(\Omega)$, and $G(v) \geq \limsup_{h \searrow 0} G^h(v^h,U^h)$.
		Together we then have
		\begin{align*}
			G(w) \leq \liminf_{h \searrow 0} G^h(w^{\tau_h},V^h) \leq \liminf_{h \searrow 0} G^h(v^h,U^h) \leq \limsup_{h \searrow 0} G^h(v^h,U^h) \leq G(v)
		\end{align*}
		for each $v \in \BVW(\Omega)$. This yields the optimality of $w$ for \eqref{eq:P_c}.
	\end{proof}
	\begin{remark}
		\Cref{thm:convergence_to_minimizer} implicitly yields the existence of a minimizer of \eqref{eq:P_c} and therefore for \eqref{eq:p}.
	\end{remark}
	\subsection{Lim sup inequality} \label{sec:limsup}
	In this section, we prove the $\limsup$ inequality for the functions $G$ and $G^h$,
	which is stated in \Cref{thm:limsup_inequality}.
	We need to find a constant $c$ so that the inequality in the
	set $Z^h$ can be satisfied by a recovery sequence.
	The admissible range of the constant $c$ depends on the dimension $d$ so that we provide different proofs depending on $d$.
	As an intermediate step, we first prove the inequality
	$\TV(R^W_{P0^\tau}(w)) \leq c \TV(w)$
	in each case for a respective constant $c \geq 1$ depending on $d$.
	For the case $d=1$ it is immediate that any constant $c \geq 1$ is permissible.
	Besides the theoretical requirements on the constant $c$, its choice may have a major impact on the numerical results. For a fixed mesh size, a larger constant $c$ allows for more chattering of the solution within the coarser mesh cells.
	On the other hand, a smaller constant $c$ leads to a stronger upper bound on the total variation of approximate numerical solutions and therefore less chattering. This will be confirmed
	by our computational results in \cref{sec:Numeric}.
	\begin{lemma}
		Let $d=1$ and $\QQ_\tau$ be a partition of $\Omega$ into intervals of length $\tau >0$. Let $w \in \BVW(\Omega)$. There holds $\TV(R^W_{P0^{\tau}}(w)) \leq \TV(w)$.
	\end{lemma}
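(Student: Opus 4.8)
The plan is to exploit the simple one-dimensional structure of $\BV$ functions and the fact that $R^W_{P0^{\tau}}$ rounds to a value that is \emph{actually attained}. First I would reduce to a single connected component of $\Omega$, which is an interval: the estimate is proven on each component and summed, since $\TV$ and the piecewise-constant $\TV$ of $R^W_{P0^{\tau}}(w)$ decompose over components and no variation is counted across the gaps between components. Fix such a component and let $Q_1,\dots,Q_N \in \QQ_\tau$ be its cells enumerated from left to right, write $\bar w \coloneqq R^W_{P0^{\tau}}(w)$ and $\bar w_i \coloneqq \bar w|_{Q_i} \in W$. Since $\bar w$ is constant on each cell, its total variation is the sum of the interior jumps, so that $\TV(\bar w) = \sum_{i=1}^{N-1} |\bar w_{i+1} - \bar w_i|$, and the task reduces to bounding this sum by $\TV(w)$.

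The key observation is that, by \cref{dfn:RWP0tau}, the chosen value $\bar w_i$ satisfies $\bigl| w|_{Q_i}^{-1}(\bar w_i)\bigr| > 0$, i.e.\ $w$ attains the value $\bar w_i$ on a subset of $Q_i$ of positive Lebesgue measure. Almost every point of this set is a point of density one, and at such a point the precise representative $\hat w$ takes the value $\bar w_i$ (recall $w$ is bounded, being $W$-valued). I would therefore select, for each $i$, one such density-one point $x_i \in Q_i$ with $\hat w(x_i) = \bar w_i$. Because the cells are ordered and have pairwise disjoint interiors, this yields an increasing selection $x_1 < x_2 < \dots < x_N$ of interior points of the component.

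Finally I would invoke the classical one-dimensional characterization of the total variation as the pointwise (essential) variation of the good representative: for any finite increasing selection of points of approximate continuity, the sum of consecutive absolute increments of $\hat w$ is bounded by $\TV(w)$. Applied to $x_1 < \dots < x_N$ this gives
\begin{align*}
	\TV(\bar w) = \sum_{i=1}^{N-1} |\bar w_{i+1} - \bar w_i| = \sum_{i=1}^{N-1} |\hat w(x_{i+1}) - \hat w(x_i)| \le \TV(w),
\end{align*}
which is the claim with $c=1$. The only genuinely nontrivial ingredients are this pointwise-variation characterization and the existence of the representative-valued points $x_i$; both are standard for $d=1$, so I do not expect a real obstacle here. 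The one place deserving care is the reduction to a single component together with the correct exclusion of the endpoints of each component (which lie on $\partial\Omega$ and contribute no interior variation), ensuring that the index sum runs only over the $N-1$ interior grid points.
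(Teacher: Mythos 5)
Your proposal is correct and follows essentially the same route as the paper: enumerate the cells left to right, use the property from \cref{dfn:RWP0tau} that each rounded value $\bar w_i$ is attained by $w$ on a positive-measure subset of $Q_i$, and bound the resulting sum of jumps by $\TV(w)$. The paper's proof is terser, leaving the final inequality implicit, whereas you explicitly supply the justification (density-one points of the level sets and the one-dimensional pointwise/essential-variation characterization of $\TV$) and also handle the case of $\Omega$ having several components, which the paper glosses over by writing $\Omega=(a_\ell,a_u)$.
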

	\begin{proof}
		Let $\Omega = (a_\ell,a_u)$ with $\QQ_\tau = \{Q_1, \dots, Q_n\}$ with $Q_i = (a_i,a_{i+1}]$ for $i = 1,\dots, n-1$ and $Q_n = (a_n,a_{n+1})$ with $a_\ell = a_1 < a_2 < \dots < a_n < a_{n+1} = a_u$.  Let $R^W_{P0^\tau}|_{Q_i} \equiv w_i$ with $w_i \in W$ for $i=1,\dots,n$. \Cref{dfn:RWP0tau} yields that $\left| w|_{Q_i}^{-1}(w_i) \right| >0$ for each $i = 1,\dots,n$, so that
		$\TV(R^W_{P0^\tau}(w)) = \sum_{i=1}^n |w_i - w_{i+1}| \leq \TV(w)$.
	\end{proof}
	For the cases $d=2$ and $d=3$, we provide a constant that is not sharp but can be proven straightforwardly in \Cref{thm:TV_estimate_d23}. In \Cref{thm:tvh_tv_d=2}, we will improve upon this in the case $d=2$ and prove that $\sqrt{2}$ is a sharp lower bound on $c$ in that case. Since the proof of \Cref{thm:tvh_tv_d=2} needs technical arguments, we only provide a sketch of the proof and
	postpone the detailed proof and the preparatory results to \cref{appendix:A}.
	\begin{theorem} \label{thm:TV_estimate_d23}
		Let $d \in \{2,3\}$ and $\QQ_\tau$ be a partition of $\Omega$ into axis-aligned squares or cubes $Q \in \QQ_\tau$ of height $\tau >0$. Let $w \in \BVW(\Omega)$. There holds
		\begin{align*}
			\TV(R^W_{P0^\tau }(w))  \leq c_d \TV(w)
		\end{align*}
		with $c_d \coloneqq (4d+1) \sqrt{d}$.
	\end{theorem}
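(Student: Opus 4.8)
The plan is to use that $v \coloneqq R^W_{P0^\tau}(w)$ is piecewise constant on $\QQ_\tau$, so that its total variation is carried by the interior faces $\EE$ of the mesh: since on an axis-aligned mesh the jump across each face points in a single coordinate direction, one has $|Dv| = \sum_{i=1}^d |D_i v|$ and hence $\TV(v) = \tau^{d-1}\sum_{E \in \EE}|v_{E^+}-v_{E^-}|$, where $v_{E^\pm}$ denote the two cell values adjoining $E$. I would then compare $v$ cellwise with the average projection $\bar w \coloneqq \Pi_{P0^\tau}w$ and split each face jump by the triangle inequality, $|v_{E^+}-v_{E^-}| \le |v_{E^+}-\bar w_{E^+}| + |\bar w_{E^+}-\bar w_{E^-}| + |\bar w_{E^-}-v_{E^-}|$, so that $\TV(v)$ is estimated by a rounding contribution plus $\TV(\bar w)$.

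For the rounding contribution I would charge each cell error $|v_Q - \bar w_Q|$ to the at most $2d$ interior faces of $Q$. Because $\|v - \bar w\|_{L^1(\Omega)} = \tau^d\sum_Q|v_Q-\bar w_Q|$, these terms sum to at most $2d\,\tau^{d-1}\sum_Q|v_Q-\bar w_Q| = \tfrac{2d}{\tau}\|v-\bar w\|_{L^1(\Omega)}$, and \cref{lem:estimate_RWP0h} bounds $\|R^W_{P0^\tau}(w)-\Pi_{P0^\tau}w\|_{L^1(\Omega)} \le \sqrt d\,\tau\,\TV(w)$, giving a contribution of at most $2d\sqrt d\,\TV(w)$.

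For $\TV(\bar w)$ I would argue direction by direction through one-dimensional slicing. Fixing a coordinate direction $e_i$ and a line $\ell_y$ parallel to $e_i$, the averages on two cells $Q_1,Q_2$ sharing a face $E \perp e_i$ are the transverse means of the one-dimensional half-averages $m_1(y),m_2(y)$ of $w$, and $|m_1(y)-m_2(y)|$ is at most the oscillation of $w$ on $\ell_y\cap(Q_1\cup Q_2)$, hence at most its one-dimensional total variation there. Integrating over $E$ and summing over the faces perpendicular to $e_i$, each single-cell slice lies in only two consecutive double-cell terms, so the sum is at most $2|D_iw|(\Omega)$; summing over $i$ and using $\sum_i|D_iw|(\Omega) = \int\|\sigma\|_1\,\mathrm d|Dw| \le \sqrt d\,|Dw|(\Omega)$ for the unit polar vector $\sigma$ of $Dw$ yields $\TV(\bar w) \le 2\sqrt d\,\TV(w)$.

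Combining the two contributions gives $\TV(v) \le (2d+2)\sqrt d\,\TV(w)$, which in particular establishes the asserted bound with $c_d = (4d+1)\sqrt d$. The main obstacle is the third step: controlling the jumps of the $\QQ_\tau$-cell averages of a generic $\BV$ function by $\TV(w)$ is not immediate and requires the slicing argument together with the careful bookkeeping of the factor two---each cell belongs to two consecutive pairs---and of the $\sqrt d$ loss incurred when passing between the $\ell^1$ and $\ell^2$ norms of the gradient direction; by contrast, the rounding step reduces directly to the $L^1$ estimate of \cref{lem:estimate_RWP0h}.
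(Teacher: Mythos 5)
Your proof is correct in outline and its constants check out, but it takes a genuinely different route from the paper. The paper argues on the dual side: following the construction of Raviart--Thomas basis functions from \cite{bahriawati2005RT}, it assembles facet-supported bubble fields $\phi_E$ into a single $\phi \in H_0(\div;\Omega)$ with $\|\phi\|_{L^\infty(\Omega;\R^d)} \le 1$ and $\|\div \phi\|_{L^\infty(\Omega)} \le 2\sqrt{d}/\tau$ that realizes $\int_\Omega R^W_{P0^\tau}(w)(x) \div\phi(x)\dd x = \tfrac{1}{\sqrt{d}}\TV(R^W_{P0^\tau}(w))$; it then uses this same $\phi$ as a test field for $\TV(w)$ via \cref{lem:TV_div} and absorbs the difference with the estimate $\|R^W_{P0^\tau}(w)-w\|_{L^1(\Omega)} \le 2\sqrt{d}\,\tau \TV(w)$ from \cref{lem:estimate_RWP0h}, which produces the error term $4d\sqrt{d}\,\TV(w)$ and hence $c_d=(4d+1)\sqrt{d}$. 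You stay entirely on the primal side: the face-jump representation of $\TV$ for piecewise constant functions, a triangle inequality through $\Pi_{P0^\tau}w$, the rounding estimate $\|R^W_{P0^\tau}(w)-\Pi_{P0^\tau}w\|_{L^1(\Omega)} \le \sqrt{d}\,\tau\TV(w)$ (contributing $2d\sqrt{d}\,\TV(w)$), and a one-dimensional slicing argument giving the projection stability $\TV(\Pi_{P0^\tau}w) \le 2\sqrt{d}\,\TV(w)$. That slicing step is the only ingredient not already available in the paper; it relies on the standard BV slicing theorem (a.e.\ line restriction is BV and the transverse integral of the slice variations recovers $|D_i w|$), the factor-two double counting of cells in consecutive cell pairs, and $\sum_{i=1}^d |D_i w|(\Omega) \le \sqrt{d}\,|Dw|(\Omega)$ --- all of which you identify correctly, though a full write-up must also handle the mass of $|D_i w|$ that may sit exactly on grid planes (resolved by working with half-open cells or open double cells). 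Your route even yields the sharper constant $(2d+2)\sqrt{d} \le (4d+1)\sqrt{d}$ for $d \ge 1$, so it proves a slightly stronger statement; what the paper's dual construction buys is that it needs no slicing theory and stays within the duality toolkit used throughout the paper (\cref{lem:TV_div}, \cref{lem:div_bound}), at the price of a larger constant.
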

	\begin{proof}
		We construct a function $\phi \in H_0(\div;\Omega)$ to realize the exact value of $\TV(R^W_{P0^\tau }(w))$ following the construction of Raviart--Thomas basis functions in \cite{bahriawati2005RT}. Afterwards, we apply
		\cref{lem:TV_div} and use $\phi$ as a test function for $\TV(w)$ to estimate $\TV(R^W_{P0^\tau }(w))$ against $\TV(w)$. We denote the collection of facets in the interior of $\Omega$ by $\EE$ and $\bar{w} \coloneqq R^W_{P0^\tau}(w)$. Consider a facet $E \in \EE$. Then $E = \overline{Q^E_1} \cap \overline{Q^E_2}$ for $Q_1^E,Q_2^E \in \QQ_{\tau}$ with $Q_1^E \neq Q_2^E$. Denote by $q_1^E, q_2^E \in \R^d$ the respective centers of the cubes $Q_1^E$ and $Q_2^E$ and define $T_i^E \coloneqq \conv(E \cup \{q_i^E\})$ for $i \in \{1,2\}$. We define the function $\phi_E$ by
		\begin{align*}
			\phi_E(x) = \begin{cases}
				(-1)^i s_E \frac{2}{\tau} (x - q_i^E) & \text{for } x \in T_i^E, \, i \in \{1,2\} \\
				0 & \text{elsewhere,}
			\end{cases}
		\end{align*}
		where $s_E \coloneqq \mathrm{sgn} (\bar{w} |_{Q_2^E} - \bar{w} |_{Q_1^E}) \in \{-1,0,1\}$. There holds that $\phi_E \in H_0(\div; \Omega)$ because $\div \phi_E = (-1)^i d s_E\frac{2}{\tau}$ on $T_i^E , \, i \in \{1,2\}$, and $\div \phi_E = 0$ elsewhere as well as $\phi_E \cdot n_E = s_E$ on $E$,
		$\phi_E \cdot n_{T_i^E} = 0$ on $\partial T_i^E \setminus E$ as in \cite{bahriawati2005RT}, where $n_{T_i^E}$ denotes the outer normal to $T_i^E$, $i \in \{1,2\}$,
		and $\phi_E \cdot n_E =  0$ on $H \in \EE \setminus E$,
		where $n_E$ denotes the unit normal vector of $E$ pointing from $Q_2^E$ to $Q_1^E$. Moreover, there holds $\| \phi_E \|_{L^\infty(\Omega;\R^d)}\leq \sqrt{d}$.
		Note that the supports of all $\phi_E$ only intersect on a set of Lebesgue measure zero. We define $\phi \coloneqq \frac{1}{\sqrt{d}} \sum_{E \in \EE} \phi_E$, then $\phi \in H_0(\div;\Omega)$ with $\| \phi \|_{L^\infty(\Omega;\R^d)} \leq 1$. There holds
		\begin{align*}
			\int_\Omega \bar{w} (x)\div \phi (x) \dd x & = \frac{1}{\sqrt{d}} \sum_{E \in \EE} \int_E ( \bar{w} |_{Q_2^E} - \bar{w} |_{Q_1^E}) \, \phi_E (x) \cdot n_E (x) \dd \Ha^{d-1}(x) \\
			& = \frac{1}{\sqrt{d}} \sum_{E \in \EE} \int_E | \bar{w} |_{Q_2^E} - \bar{w} |_{Q_1^E} | \dd \Ha^{d-1}(x) = \frac{1}{\sqrt{d}} \TV(\bar{w}),
		\end{align*}
		where the last equality follows from \cite[Lemma 2.1]{manns2023integer}, and therefore
		\begin{align*}
			\TV(\bar{w})
			& = \sqrt{d} \int_\Omega (\bar{w}(x) - w(x)) \div \phi(x) \dd x + \sqrt{d} \int_\Omega w(x) \div \phi(x) \dd x \\
			& \leq \sqrt{d} \| \bar{w} - w \|_{L^1(\Omega)} \| \div \phi \|_{L^\infty(\Omega)}+ \sqrt{d} \TV(w)  \leq (4d+1) \sqrt{d}\TV(w),
		\end{align*}
		where we have used \cref{lem:estimate_RWP0h} for the second inequality.
	\end{proof}
	For the case $d=2$, we improve the constant $c$.
	\begin{theorem}\label{thm:tvh_tv_d=2}
		Let $d = 2$ and $F: L^1(\Omega) \to \R$ be continuous and bounded. Consider tuples $\{(h,\tau_h)\}_{h >0}$ such that \Cref{assu:grid} is fulfilled and $\frac{\tau_h}{h} \searrow 0$ as $h \searrow 0$. Let $w \in \BVW(\Omega)$ be given. There is a sequence $\{w^h\}_{h >0}$ with $w^h \in P0^{\tau_h} \cap \BVW(\Omega)$ as $h \searrow 0$ such that $w^h \weakstarto w$ in $\BV(\Omega)$,
		\begin{align*}
			\limsup_{h \searrow 0} \TV(w^h) \leq \sqrt{2} \TV(w), \quad \text{and} \quad \limsup_{h \searrow 0} \TVh(w^h) \leq \TV(w).
		\end{align*}
	\end{theorem}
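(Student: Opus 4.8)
The plan is to use the same recovery sequence as in \cref{thm:limsup_TV_TVh}, namely $w^h\coloneqq R^W_{P0^{\tau_h}}(w)\in P0^{\tau_h}\cap\BVW(\Omega)$, and to show that this single sequence satisfies all three assertions. With this choice the third one, $\limsup_{h\searrow0}\TVh(w^h)\le\TV(w)$, is precisely \cref{thm:limsup_TV_TVh}, and \cref{lem:estimate_RWP0h} gives $\|w-w^h\|_{L^1(\Omega)}\le 2\sqrt2\,\tau_h\TV(w)\to0$, i.e.\ $w^h\to w$ in $L^1(\Omega)$. Once the first assertion $\limsup_{h\searrow0}\TV(w^h)\le\sqrt2\TV(w)$ is in hand, the sequence is bounded in $\BV(\Omega)$---for small $h$ by this estimate and otherwise by finiteness of $\TV$ on $P0^{\tau_h}\cap\BVW(\Omega)$---so its $L^1$-convergence upgrades to $w^h\weakstarto w$ in $\BV(\Omega)$, cf.\ \cref{lem:converging_subsequence}. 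Everything therefore reduces to the bound $\limsup_{h\searrow0}\TV(w^h)\le\sqrt2\TV(w)$, and the rest of the sketch concerns only this.

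The mechanism I would exploit is the anisotropic total variation $\TV_{\ell^1}(v)\coloneqq\sup\{\int_\Omega v\,\div\phi\dd x : \phi\in H_0(\div;\Omega),\ \|\phi(x)\|_\infty\le1\}=|D_1 v|(\Omega)+|D_2 v|(\Omega)$. Two of the three links in the chain are elementary. First, because $w^h\in P0^{\tau_h}$ is piecewise constant on axis-aligned cells, its jump set consists solely of facets with unit normals $\pm e_1,\pm e_2$, on which $\|\cdot\|_1$ and $\|\cdot\|_2$ agree; hence $\TV(w^h)=\TV_{\ell^1}(w^h)$. Second, $\TV_{\ell^1}(w)\le\sqrt2\,\TV(w)$, which follows at once from the inclusion $\{\|\phi\|_\infty\le1\}\subset\sqrt2\,\{\|\phi\|_2\le1\}$ of admissible test fields, equivalently from $\|x\|_1\le\sqrt2\,\|x\|_2$ on $\R^2$. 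Chaining these with the remaining, genuinely two-dimensional estimate $\limsup_{h\searrow0}\TV_{\ell^1}(w^h)\le\TV_{\ell^1}(w)$ then yields $\limsup_{h\searrow0}\TV(w^h)\le\sqrt2\TV(w)$. The factor $\sqrt2$ is sharp: as in \cref{expl:TVh}, a diagonal interface of Euclidean length $L$ is resolved on the grid by a staircase of length $\sqrt2\,L$, so no constant below $\sqrt2$ can work.

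The hard part will be the estimate $\limsup_{h\searrow0}\TV_{\ell^1}(w^h)\le\TV_{\ell^1}(w)$, stating that the staircase pixelation built into $R^W_{P0^{\tau_h}}$ does not inflate the $\ell^1$-variation in the limit; this is where I would invoke the preparatory results of \cref{appendix:A}. The route I envisage reduces the claim to sets of finite perimeter: since $W$ is finite, $w$ has only finitely many threshold values, and decomposing $\TV_{\ell^1}$ along these via the coarea formula reduces matters to showing, for each relevant level set $E$ of $w$, that the horizontal and vertical facet counts of its pixelation satisfy $\tau_h\cdot\#\{\text{horizontal facets of }\partial E_{\tau_h}\}\to|D_2\chi_E|(\Omega)$ and $\tau_h\cdot\#\{\text{vertical facets of }\partial E_{\tau_h}\}\to|D_1\chi_E|(\Omega)$, whence $\limsup_{h\searrow0}\operatorname{Per}(E_{\tau_h})\le\TV_{\ell^1}(\chi_E)$. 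The delicate point---and the reason for the technical detour---is that $R^W_{P0^{\tau_h}}$ rounds each cell average to the nearest \emph{attained} value rather than performing a monotone level-set pixelation, so the induced partition need not be nested across thresholds and may produce spurious jumps on a layer of cells abutting the interface; proving that these contributions are of lower order, and that the facet-counting limits hold for general sets of finite perimeter (not just smooth ones), is exactly what \cref{appendix:A} supplies. Unlike the crude estimate of \cref{thm:TV_estimate_d23}, this argument never pays the $O(\tau_h/h)$ interpolation error, which is precisely what lets the constant descend to the sharp value $\sqrt2$.
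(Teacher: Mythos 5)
There is a genuine gap, and it sits exactly where you placed your ``hard part.'' Your proposal takes $w^h \coloneqq R^W_{P0^{\tau_h}}(w)$ --- the rounding of $w$ itself --- and reduces everything to the estimate $\limsup_{h\searrow 0}\TV_{\ell^1}(R^W_{P0^{\tau_h}}(w))\le \TV_{\ell^1}(w)$ for \emph{general} $w\in\BVW(\Omega)$, asserting that this is ``exactly what \cref{appendix:A} supplies.'' It is not. The key result of \cref{appendix:A}, \cref{thm:TV_polygonal}, proves the bound $\limsup_{\tau\searrow 0}\TV(R^W_{P0^\tau}(w))\le\sqrt2\,\TV(w)$ \emph{only} for functions whose level sets have polygonal boundaries consisting of finitely many line segments; its proof leans on that finiteness (finitely many vertices enclosed in balls whose total contribution is $O(\tau)$, and one segment per cell away from the vertices). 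The paper's actual proof of \cref{thm:tvh_tv_d=2} is structured precisely to \emph{avoid} ever proving a direct-rounding bound for general $w$: it first approximates $w$ strictly by polygonal functions $v^{j}$ (\cref{lem:approx_polygonal}, from \cite{braides2017density}), applies \cref{thm:TV_polygonal} to each fixed $v^{j_m}$, and then extracts a diagonal sequence $w^h = R^W_{P0^{\tau_h}}(v^{j_m})$ for $h\in(h_{m+1},h_m]$. Your claim for general $w$ is genuinely delicate and unproven: \cref{dfn:RWP0tau} makes the cell value depend on which values $w$ \emph{attains} on the cell and on near-ties of the cell average, so it is sensitive to structure of $w$ of arbitrarily small measure at every scale (e.g.\ tiny ``islands'' near an interface can tip straddling cells and create spurious jumps between neighboring cells); ruling out that such effects persist as $\tau_h\searrow 0$ for an arbitrary $\BVW$ function is precisely the difficulty that the polygonal-approximation detour neutralizes, and nothing in your sketch (nor in \cref{appendix:A}) does this.

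Two further consequences of this for your write-up. First, once the recovery sequence is changed to the diagonal one (as it must be, absent a proof of your key estimate), your statement that the third assertion ``is precisely \cref{thm:limsup_TV_TVh}'' no longer applies: the paper instead obtains $\limsup_{h\searrow 0}\TVh(w^h)\le\TV(w)$ by applying \cref{lem:estimate_TVhRP0hTV} to the approximants $v^{j_m}$ and using the strict convergence $\TV(v^{j_m})\to\TV(w)$. Second, a positive remark: your anisotropic reduction --- $\TV(w^h)=\TV_{\ell^1}(w^h)$ for grid-aligned piecewise constant functions and $\TV_{\ell^1}(w)\le\sqrt2\,\TV(w)$ --- is a clean and correct way to isolate where $\sqrt2$ comes from, and it is the same mechanism that drives the per-segment ``staircase'' estimate inside the proof of \cref{thm:TV_polygonal}; it would be a legitimate alternative packaging of that proof \emph{for polygonal $w$}, but it does not substitute for the approximation-plus-diagonalization step that handles general $w\in\BVW(\Omega)$.
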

	\begin{proof}[Sketch of proof]
		The proof is based on a diagonal sequence selection argument that
		chains several approximations.
		First, we approximate $w$ strictly by functions with polygonal jump sets by means of \Cref{lem:approx_polygonal}. Then we apply the rounding operator  $R^W_{P0^{\tau_h}}$ to the elements of the sequence to obtain functions in $P0^{\tau_h}$ with values in $W$. Next, we establish \cref{thm:TV_polygonal} to estimate the total variation of a rounding of a function with polygonal jump sets against the total variation of its preimage for vanishing grid size $\tau_h$, which yields the constant $\sqrt{2}$. From this, a diagonal sequence is selected by balancing the approximation by functions with polygonal jump sets with the approximation of the rounding operator.
	\end{proof}
	The following example illustrates that the constant $\sqrt{2}$ in \Cref{thm:tvh_tv_d=2} is sharp, that is, there cannot exist $c < \sqrt{2}$
		that satisfies $\limsup_{h\searrow 0} \TV(R^W_{P0^{\tau_h}}(w)) \leq c \TV(w)$ uniformly for all $w \in \BVW(\Omega)$
		if $d = 2$.
	\begin{example} \label{expl:constant_sharp}
		Let $d=2$, $\Omega = (0,1)^2$, $W= \{0,1\}$, and $w \in \BVW(\Omega)$ be defined by $w= \chi_{\{(-1,1)^Tx > 0\}}$.
		Then $\TV(w) = \sqrt{2}$. Consider the family of meshes $\{\QQ_{\tau_k}\}_{k \in \N}$ with $\tau_k = \frac{1}{k}$ for $k \in \N$. Then $\TV(R^W_{P0^{\tau_k}}(w)) = \frac{2(k-1)}{k} \to 2$ as $k \to \infty$. This yields that the constant $\sqrt{2}$ in \Cref{thm:TV_polygonal} is sharp.
	\end{example}
	\begin{remark}
		The constant $\sqrt{2}$ in \cref{thm:tvh_tv_d=2} is sharp for the case $d=2$ as we proved in \cref{expl:constant_sharp}. We are convinced that a similar result holds for $d=3$ that can be obtained with a similar proof strategy. The necessary technical effort goes beyond the scope of this work, however.
	\end{remark}
	With the help of the former statements, we can now prove \Cref{thm:limsup_inequality}, which is stated in the beginning of the section.
	\begin{proof}[Proof of \Cref{thm:limsup_inequality}]
		To prove the $\limsup$ inequality, we may assume $w \in Z$ and $\TV(w) < \infty$ because otherwise the inequality follows immediately because in this case there holds $G(w) = \infty$. We consider the components of the objective functions $G$ and $G^h$ separately and start by proving $\limsup_{h \searrow 0} \alpha V^h \leq \alpha \TV(w)$. For the case $d =2$, we consider the sequence $\{w^{\tau_h}\}_{h >0}$ from \cref{thm:tvh_tv_d=2} and for the cases $d=1$ and $d=3$ we choose $w^{\tau_h} \coloneqq R^W_{P0^{\tau_h}}(w)$ for $h>0$. Then there holds
		\begin{align}
			\limsup_{h \searrow 0} \TV(w^{\tau_h}) \leq c \TV(w) \label{eq:limsup_const}
		\end{align}
		for the respective $c \geq 1$ in the case $d=1$, $c \geq \sqrt{2}$ in the case $d=2$, and $c \geq 13 \sqrt{3}$ in the case $d=3$. We define $\overline{T} \coloneqq \limsup_{h \searrow 0} \TV(w^{\tau_h}) \leq c \TV(w) < \infty$ and $\underline{T} \coloneqq \liminf_{h \searrow 0} \TVh(w^{\tau_h}) \geq 0$. Then we define the following two sequences $\delta^h_1 \coloneqq \max \{0,\TV(w^{\tau_h}) - \overline{T}\}$ and $\delta^h_2 \coloneqq \max \{0, \underline{T} - \TVh(w^{\tau_h})\}$, which fulfill $\frac{\delta^h_1}{c} + \delta^h_2 \to 0$ as $h \searrow 0$. We define $V^h \coloneqq \TVh(w^{\tau_h}) + \frac{\delta^h_1}{c} + \delta^h_2$. There holds
		\begin{align*}
			\limsup_{h \searrow 0} \TVh(w^{\tau_h}) + \frac{\delta^h_1}{c} + \delta^h_2 \leq \limsup_{h \searrow 0} \TVh(w^{\tau_h}) + \lim_{h \searrow 0} \frac{\delta^h_1}{c} + \delta^h_2 \leq \TV(w)
		\end{align*}
		by \cref{thm:tvh_tv_d=2} and therefore $\limsup_{h \searrow 0} \alpha V^h \leq \alpha \TV(w)$.
		Next, we consider the indicator functionals $I_{Z^h}$ and $I_Z$. Since $\TV(w) \leq c \TV(w)$ is trivially fulfilled due to $c \geq 1$, there holds $I_Z(w) = 0$. We prove that $I_{Z^h}(w^{\tau_h},V^h) = 0$ for all $h >0$, that is we need to prove that $\TVh(w^{\tau_h}) \leq V^h$ and $\TV(w^{\tau_h}) \leq c V^h$ for all $h >0$. The inequality $\TVh(w^{\tau_h}) \leq V^h = \TVh(w^{\tau_h}) + \frac{\delta^h_1}{c} + \delta^h_2$ is trivially fulfilled due to $\frac{\delta^h_1}{c}, \delta^h_2 \geq 0$. For the second inequality, there holds
		\begin{align*}
			\TV(w^{\tau_h}) - \delta^h_1 & \leq \limsup_{h \searrow 0} \TV(w^{\tau_h}) \leq c \TV(w) \\
			& \leq c \liminf_{h \searrow 0} \TVh(w^{\tau_h}) \leq c (\TVh(w^{\tau_h})+ \delta^h_2),
		\end{align*}
		where the second inequality is due to \eqref{eq:limsup_const} and the third follows from \Cref{thm:TV_liminf}, which yields $\TV(w^{\tau_h}) \leq c V^h$.
		
		Finally, since the function $F:L^1(\Omega) \to \R$ is continuous, there holds $F(w) = \lim_{h \searrow 0} F(w^{\tau_h})$ because $w^{\tau_h} \to w$ in $L^1(\Omega)$.
		In total, we have
		\begin{align*}
			G(w) & = F(w) + \alpha \TV(w) + I_Z(w) \geq \lim_{h \searrow 0} F(w^{\tau_h}) + \limsup_{h \searrow 0} \alpha V^h + \lim_{h \searrow 0} I_{Z^h}(w^{\tau_h},V^h) \\
			& \geq \limsup_{h \searrow 0} F(w^{\tau_h}) + \alpha V^h + I_{Z^h}(w^{\tau_h},V^h) = \limsup_{h \searrow 0} G^h(w^{\tau_h},V^h).
		\end{align*}
	\end{proof}
	\section{Outer Approximation} \label{sec:OA}
	In this section, we provide an outer approximation algorithm for solving \eqref{eq:P_c^h}. The inequality $\TVh(w) \leq V$ in \eqref{eq:P_c^h} is equivalent to
	\begin{align}
		\int_{\Omega} \div \phi (x) w (x) \dd x \leq V \quad \forall \, \phi \in RT0^h_0 \text{ with } \| \phi \, \|_{L^\infty(\Omega;\R^d)} \leq 1. \label{eq:cutting_planes}
	\end{align}
	The idea of the algorithm is to start by solving \eqref{eq:P_c^h} without the constraint $\TVh(w) \leq V$ and iteratively add the inequalities in \eqref{eq:cutting_planes} to cut off infeasible solutions until the remaining violation of \eqref{eq:cutting_planes} vanishes. The algorithm is stated in \cref{alg:outer_approx}.
	\begin{remark}
		The idea of using a cutting plane strategy for approximating the total variation seminorm
		is in no way limited to the integrality restriction we impose. A further article that
		analyzes this subject in convex control settings is currently being prepared by Meyer and Schiemann
		\cite{ms2024tv}.
	\end{remark}
	\begin{algorithm}[t]
		\caption{Outer approximation algorithm for \eqref{eq:P_c^h}}\label{alg:outer_approx}
		\textbf{Input:} $F$ sufficiently regular, $\alpha > 0$, $\tau >0$, $h >0$.
		\begin{algorithmic}[1]
			\State Set $k = 0$.
			\State Compute an optimal solution $(w^k,V^k)$ to
			\begin{gather*}\label{eq:MIP}
				\begin{aligned}
					\min_{(w,V)} \enskip & F(w) + \alpha V \\
					\mathrm{s.t.} \enskip\, & \TV(w) \leq cV  \\
					& \int_\Omega \div \phi^i (x) \, w (x) \dd x \leq V \quad \forall \, i \in \N, i \leq k \\
					& w \in \BVW(\Omega) \cap P0^\tau.
				\end{aligned}\tag{MIP}
			\end{gather*}
			\State Compute $\TVh(w^k)$ by solving
			\begin{gather*} \label{eq:QP}
				\begin{aligned}
					\max_{\phi} \enskip & \int_{\Omega} \div \phi (x) \, w^k(x) \dd x \quad
					\mathrm{s.t.} \quad \phi \in RT0^h_0, \enskip \| \phi \|_{L^\infty(\Omega;\R^d)} \leq 1
				\end{aligned} \tag{QP}
			\end{gather*}
			with optimal solution $\phi^{k+1}$.
			\If{$\int_\Omega \div \phi^{k+1}(x) \, w^k(x) \dd x - V^k \leq 0$}
			\State \Return $w^k$ as optimal solution
			\EndIf
			\State Set $k = k+1$ and go to Step 2.
		\end{algorithmic}
	\end{algorithm}

	\begin{theorem}
		\Cref{alg:outer_approx} is well-defined, stops after finitely many iterations, and returns an optimal solution to \eqref{eq:P_c^h}.
	\end{theorem}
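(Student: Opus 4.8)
The plan is to establish the three assertions in turn, the whole argument resting on two facts: the admissible set $\BVW(\Omega) \cap P0^\tau$ is finite, and the plane $\phi^{k+1}$ produced in Step~3 is the \emph{exact} $\TVh$-maximizer at $w^k$, so that $\int_\Omega \div \phi^{k+1}(x) w^k(x)\dd x = \TVh(w^k)$ by \cref{lem:TVh}.

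For well-definedness I would first show that \eqref{eq:MIP} always admits an optimal solution. Any constant function $w \equiv w_1$ with $w_1 \in W$, together with $V = 0$, is feasible in every iteration, since $\TV(w) = 0$ and $\int_\Omega \div \phi^i(x) w(x)\dd x = w_1 \int_{\partial\Omega} \phi^i \cdot n \dd \Ha^{d-1} = 0$ because $\phi^i \in RT0^h_0$. For each fixed $w \in \BVW(\Omega) \cap P0^\tau$ the constraints force $V \ge \max\{\tfrac{1}{c}\TV(w), \max_{i \le k} \int_\Omega \div \phi^i \, w\}$, and since $\alpha > 0$ the optimal $V$ equals this maximum; minimizing the resulting objective over the finite set $\BVW(\Omega) \cap P0^\tau$ yields a minimizer. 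That \eqref{eq:QP} admits an optimal solution $\phi^{k+1}$ is precisely \cref{lem:TVh}, and the stopping check in Step~4 is a mere arithmetic comparison, so every step is executable.

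For finite termination, since $\BVW(\Omega) \cap P0^\tau$ is finite it suffices to show the iterates $w^k$ are pairwise distinct until the algorithm stops. Suppose $w^k = w^j$ for some $j < k$. The plane $\phi^{j+1}$, generated from $w^j$ in iteration $j$, appears in \eqref{eq:MIP} at iteration $k$ because $j+1 \le k$, whence $\int_\Omega \div \phi^{j+1}(x) w^k(x)\dd x \le V^k$. As $w^k = w^j$ and $\phi^{j+1}$ maximizes $\int_\Omega \div \phi \, w^j$, the left-hand side equals $\TVh(w^j) = \TVh(w^k) = \int_\Omega \div \phi^{k+1}(x) w^k(x)\dd x$, so the criterion in Step~4 is met and the algorithm returns at iteration $k$. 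Thus any non-terminating iteration produces a $w^k$ distinct from all predecessors, bounding the iteration count by $|\BVW(\Omega) \cap P0^\tau|$.

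For optimality, upon termination $\TVh(w^k) = \int_\Omega \div \phi^{k+1}(x) w^k(x)\dd x \le V^k$, so $(w^k,V^k) \in Z^h$ and is feasible for \eqref{eq:P_c^h}. Conversely, each \eqref{eq:MIP} is a relaxation of \eqref{eq:P_c^h}: the two share the objective $F(w) + \alpha V$ and the domain $\BVW(\Omega) \cap P0^\tau$, while the finitely many cutting planes in \eqref{eq:MIP} are all implied by the constraint $\TVh(w) \le V$ through its equivalence \eqref{eq:cutting_planes}. Hence the optimal value of \eqref{eq:MIP} is a lower bound for that of \eqref{eq:P_c^h}; since $(w^k,V^k)$ attains the former and is feasible for the latter, it is optimal for \eqref{eq:P_c^h}. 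The main obstacle is the termination argument, whose correctness hinges on recognizing that $\phi^{k+1}$ certifies the exact value of $\TVh(w^k)$, so that a repeated iterate immediately forces the stopping criterion; the remainder is routine relaxation bookkeeping once finiteness of the admissible set is invoked.
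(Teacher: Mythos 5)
Your proposal is correct and follows essentially the same route as the paper's proof: well-definedness from the finiteness and non-emptiness of the feasible set of \eqref{eq:MIP} together with \cref{lem:TVh} for \eqref{eq:QP}, finite termination by showing that a repeated iterate $w^k = w^j$ forces the stopping criterion (since the cut $\phi^{j+1}$ present in iteration $k$ certifies $\TVh(w^k) \leq V^k$), and optimality of the returned point. You merely spell out two details the paper leaves implicit --- the explicit feasible point $(w_1, 0)$ for \eqref{eq:MIP} and the relaxation argument showing that a terminal iterate, being optimal for the relaxed problem and feasible for \eqref{eq:P_c^h}, is optimal for \eqref{eq:P_c^h} --- which is a welcome clarification but not a different proof.
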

	\begin{proof}
		\Cref{alg:outer_approx} is well-defined because problem \eqref{eq:MIP} admits an optimal solution due to its finite and non-empty feasible set and the boundedness of the objective function from below and problem \eqref{eq:QP} admits an optimal solution by \cref{lem:TVh}.
		
		Assume that the algorithm finds $(w^i,V^i)$ in step 2 in iteration $i$ and $(w^j,V^j)$ in step 2 of iteration $j$ with $i < j$ and $w^i = w^j$. Since $\TVh(w^j) = \TVh(w^i) = \int \div \phi^{i+1} w^i \dd x \leq V^j$ and $\TV(w^j) = \TV(w^i) \leq c V^j$, the algorithm terminates and $w^j=w^i$ is the optimal solution to \eqref{eq:P_c^h}. This yields that the algorithm always stops after finitely many iterations since the number of functions in $\BVW(\Omega) \cap P0^\tau$ is finite.
	\end{proof}
	\section{Numerical experiments} \label{sec:Numeric}
	\begin{figure}
		\centering
		\includegraphics[scale=0.5]{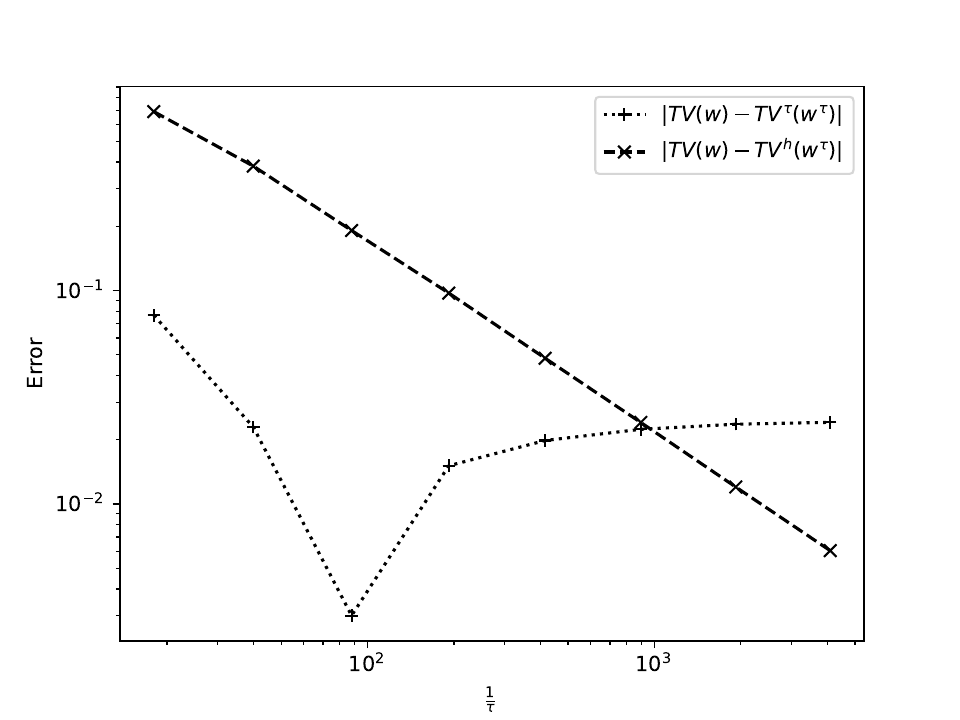}
		\caption{Errors $| \TV(w) - {\TV}^\tau(w^\tau)|$ and $|\TV(w) - \TVh(w^\tau) |$ for the pairs $(h,\tau)$ listed in \cref{fig:table_TV} with $w^\tau = R^W_{P0^\tau}(w)$.
		}\label{fig:TVh_TV}
	\end{figure}
	To demonstrate our theoretical results, we carried out two numerical experiments
	that were run on a single node of the Linux HPC cluster LiDO3 with two AMD EPYC 7542 32-Core CPUs and 64 GB RAM (computations were restricted to one CPU). We used DOLFINx 0.7.2 \cite{Dolfinx} for the finite element discretization and Gurobi 10.0.3 \cite{Gurobi} to solve the occurring optimization problems \eqref{eq:MIP} and \eqref{eq:QP}.
	
	The first numerical experiment is concerned with the approximation of $\TV$ with $\TVh$. \Cref{expl:TVh} demonstrated that there are functions $w \in \BVW(\Omega)$ such that $\TV(w)$ can not be approximated with ${\TV}^\tau(R^W_{P0^\tau}(w))$. We solved this issue by discretizing $w$ on a finer mesh than the mesh for the total variation and proved in \cref{thm:TV_liminf,thm:limsup_TV_TVh} that we can approximate $\TV(w)$ with $\TVh(R^W_{P0^{\tau}}(w))$ when the meshes $\QQ_h$ and $\QQ_{\tau}$ are coupled such that $\frac{\tau}{h} \searrow 0$ as $h \searrow 0$ and \cref{assu:grid} is fulfilled.
	We take up \cref{expl:TVh} and consider the function $w = \chi_{\{(\frac{1}{3},-1)^Tx \geq 0\}} \in \BVW(\Omega)$ with $\Omega = (0,1)^2$ and $W=\{0,1\}$. We discretized $\Omega$ into meshes $\QQ_h$ and $\QQ_\tau$ of axis-aligned squares for decreasing $h$ and $\tau$ fulfilling \cref{assu:grid}, see \cref{fig:table_TV}. We computed the values ${\TV}^\tau(w^\tau)$ and $\TVh(w^\tau)$ with $w^\tau \coloneqq R^W_{P0^\tau}(w)$. As predicted, we
	perceive that ${\TV}^\tau(w^\tau) \to \frac{1+\sqrt{5}}{3} \approx 1.07869 > \TV(w)$ as $\tau \searrow 0$ and $\TVh(w^\tau) \to \TV(w) = \frac{\sqrt{10}}{3} \approx 1.05409$ as $h \searrow 0$.
	Moreover, we observe an experimental error $\TV(w) - \TVh(w^\tau) \leq r \, \frac{\tau}{h}$ with $r \approx 8.513 \cdot 10^{-3}$, see \cref{fig:table_TV},
	in accordance with our theoretical error estimates in 
	\cref{prop:TVh_lower_bound,lem:estimate_TVhRP0hTV}.
	The non-vanishing error $|\TV(w) - {\TV}^\tau(w^\tau)|$
		and the vanishing error $|\TV(w) - \TVh(w^\tau)|$ are plotted for $\frac{\tau}{h}
		\searrow 0$, $h \searrow 0$ in \cref{fig:TVh_TV}.
	Another advantage of using two varying meshes for the input functions and the total variation besides the approximation is that the computation of $\TVh(w^\tau)$ is faster than the computation of ${\TV}^\tau(w^\tau)$ for the same $\tau >0$, which is due to the smaller size of the optimization problem one needs to solve to compute $\TVh(w^\tau)$ compared to ${\TV}^\tau(w^\tau)$. For $h^{-1} = 256$ and $\tau^{-1} = 256 \cdot 16$, we needed $831.16$ seconds to compute $\TVh(w^\tau)$ and $30933.33$ seconds to compute ${\TV}^\tau(w^\tau)$.
	
	In \cref{expl:constant_sharp}, we provided an example in which the constant $c = \sqrt{2}$ is sharp for the case $d=2$. Nevertheless, there are instances in which the constant could be chosen within the interval $[1,\sqrt{2})$. For example, if the limit function is a square whose sides lie on the grid lines of the mesh, then even the choice $c =1$ is valid. In our first numerical example whose results are stated in \cref{fig:table_TV}, we can estimate the asymptotically valid constant experimentally by considering the limit $\lim_{\frac{\tau}{h} \searrow 0} \frac{\TV(w^\tau)}{\TVh(w^\tau)} \approx 1.27$. In general, the choice $c \in [1,\sqrt{2})$ can only be validated if the solution to the limit problem is known while the choice $c \geq \sqrt{2}$ is always valid by \cref{thm:tvh_tv_d=2}.
	Due to this certainty, we believe that $c \ge \sqrt{2}$ is generally the preferred choice in optimization contexts, where the solution to the limit problem is unknown.
	
	As a second numerical example, we consider \eqref{eq:p} with the choices $F(w) = \| w - w_d \|_{L^1(\Omega)}$ and $\alpha = 5 \cdot 10^{-3}$ , that is, we consider the imaging optimization problem
	\begin{gather*}\label{eq:P_I}
		\begin{aligned}
			\min_{w\in L^2(\Omega)}\ & \| w -w_d \|_{L^1(\Omega)} + 0.005 \TV(w)\\
			\text{s.t.}\quad & w(x) \in W \subset \Z
			\text{ for a.a.\ } x \in \Omega,
		\end{aligned}\tag{\text{P$_I$}}
	\end{gather*}
	where $w_d \in L^1(\Omega)$ represents a noisy image. To keep the problem computationally manageable with an off-the-shelf solver, we decided to measure the distance between $w$ and $w_d$ in the $L^1$-norm instead of the more common $L^2$-norm because the $L^1$-norm yields mixed-integer linear programs after discretization. This is also the problem class that arises as subproblems in \cite{manns2023integer}, but since the subproblems have additional constraints which make them computationally more expensive, we leave them as future work. The original picture is shown in \cref{fig:cats} (a) and the noisy version is shown in \cref{fig:cats} (b). To obtain $w_d$, we added Gaussian noise with standard deviation $\sigma = 5 \%$ to the original picture and scaled the gray scale values to the interval $[0,5]$. We set $W = \{ 0, \dots, 5\}$ which represents six evenly distributed gray scale values.
	
	For the discretization \eqref{eq:P_c^h} of the problem above, we chose the mesh sizes $(h,\tau) = (\frac{1}{32}, \frac{1}{128})$ and $(h,\tau) = (\frac{1}{64},\frac{1}{512})$ and the constants $c = 3^i \sqrt{2}$ with $i=0,\dots,4$. We applied \cref{alg:outer_approx} with an iteration limit of $25$ iterations and a tolerance of $\num{e-03}$ for the gap $\frac{\TVh(w)-V}{\TVh(w)}$. We set the time limit for Gurobi to solve the mixed-integer linear programs \eqref{eq:MIP} to $48$ hours and the acceptable optimality gap to the default value $\num{e-04}$. We present the results for the case $(h,\tau) = (\frac{1}{32}, \frac{1}{128})$ in \cref{fig:table_cats_32_4} and for the case $(h,\tau) = (\frac{1}{64},\frac{1}{512})$ in \cref{fig:table_cats_64_8}. In the tables, we provide the respective value of the constant $c$, the information why the algorithm terminated, the number of iterations until termination, the objective value, the values $\TV(w)$, $\TVh(w)$, $V$, the gap $\frac{\TVh(w)-V}{\TVh(w)}$ of the last iterate $(w,V)$, and the running time in seconds. In column $2$ of \cref{fig:table_cats_32_4,fig:table_cats_64_8}, the abbreviations indicate the reason for the termination of the algorithm, where \emph{Opt} means that we have found an optimal solution, \emph{Tol} means that that $\frac{\TVh(w)-V}{\TVh(w)} \leq 10^{-3}$, \emph{MaxIter} means that \cref{alg:outer_approx} reached the iteration maximum, and \emph{GrbTime} means that Gurobi reached the time limit while solving \eqref{eq:MIP}. The resulting images for the case $(h,\tau) = (\frac{1}{64},\frac{1}{512})$ as well as the original and the noisy image are shown in \cref{fig:cats}.
	
	In our numerical experiments, we observed that \cref{alg:outer_approx} was mostly able to close the gap between $V$ and $\TVh(w)$ within the first few iterations to an accuracy of order $\num{e-02}$ but it was not able to close the gap to the desired accuracy of $\num{e-03}$ within the prescribed iteration limit of $25$ iterations. Further experiments suggest that a moderate increase of the iteration limit is not sufficient to achieve an accuracy of $\num{e-03}$ in this example.
	
	We highlight the impact of the choice of the constant $c$ in \cref{fig:cats} for the discretized problems \eqref{eq:P_c^h}, even if the specific choice of $c \geq \sqrt{2}$ does not make a difference in the limit from a theoretical point of view. In \cref{fig:cats} (c), we chose the constant $c = \sqrt{2}$ from \cref{thm:tvh_tv_d=2} and in \cref{fig:cats} (e) the constant $c = 9 \sqrt{2}$ from \cref{thm:TV_estimate_d23} which leads to significantly different results. In particular, the constraint $\TV(w) \leq c V$ for smaller $c$ filters out chattering as we obtained for $c = 9 \sqrt{2}$ and $c=27 \sqrt{2}$ in \cref{fig:cats} (e) and (f). This might also avoid effects as observed in Figure 11 in \cite{chambolle2021approximating}, where the authors detected diffuse solutions to an inpainting problem discretized with the Raviart--Thomas approach from \cite{Caillaud2020Error}.
	\begin{table}[h]
		\centering
		\caption{Values of $h^{-1}$, $\tau^{-1}$, ${\TV}^\tau(w^\tau)$, $\TVh(w^\tau)$, and $\TV(w^\tau)$ with $w^\tau = R^W_{P0^\tau}(w)$.}	
		\resizebox{\textwidth}{!}{
			\begin{tabular}{cccccccc}
				\toprule
				$h^{-1}$ & $\tau^{-1}$ & ${\TV}^\tau(w^\tau)$ & $\TVh(w^\tau)$ & $\TV(w^\tau)$ & $\TV(w)$ 
				& $\frac{\TV(w^\tau)}{\TV^h(w^\tau)}$
				& $(\TV(w) - \TVh(w^\tau))\frac{h}{\tau}$ \\
				\midrule
				2 & $2 \cdot 9$       & 0.97748 & 0.36456 & 1.22222 & 1.05409 & 3.35259 & \num{8.513e-03} \\
				4 & $4 \cdot 10$      & 1.03118 & 0.67116 & 1.27500 & 1.05409 & 1.89970 & \num{2.292e-03} \\
				8 & $8 \cdot 11$      & 1.05709 & 0.86291 & 1.30682 & 1.05409 & 1.51443 & \num{2.727e-04} \\
				16 & $16 \cdot 12$    & 1.0692  & 0.95678 & 1.32292 & 1.05409 & 1.38268 & \num{1.259e-03} \\
				32 & $32 \cdot 13$    & 1.07393 & 1.00593 & 1.32692 & 1.05409 & 1.31910 & \num{1.526e-03} \\
				64 & $64 \cdot 14$    & 1.07648 & 1.02999 & 1.33036 & 1.05409 & 1.29162 & \num{1.599e-03} \\
				128 & $128 \cdot 15$  & 1.07774 & 1.04208 & 1.33229 & 1.05409 & 1.27849 & \num{1.577e-03} \\
				256 &  $256 \cdot 16$ & 1.07822 & 1.04805 & 1.33276 & 1.05409 & 1.27166 & \num{1.508e-03} \\
				\bottomrule
		\end{tabular}}
		\label{fig:table_TV}
	\end{table}
	\begin{figure}[h]
		\centering
		\begin{minipage}{\textwidth}
			\centering
			\begin{minipage}[][][b]{0.32\textwidth}
				\centering
				\subfigure[lof][Original]{\includegraphics[height=4cm]{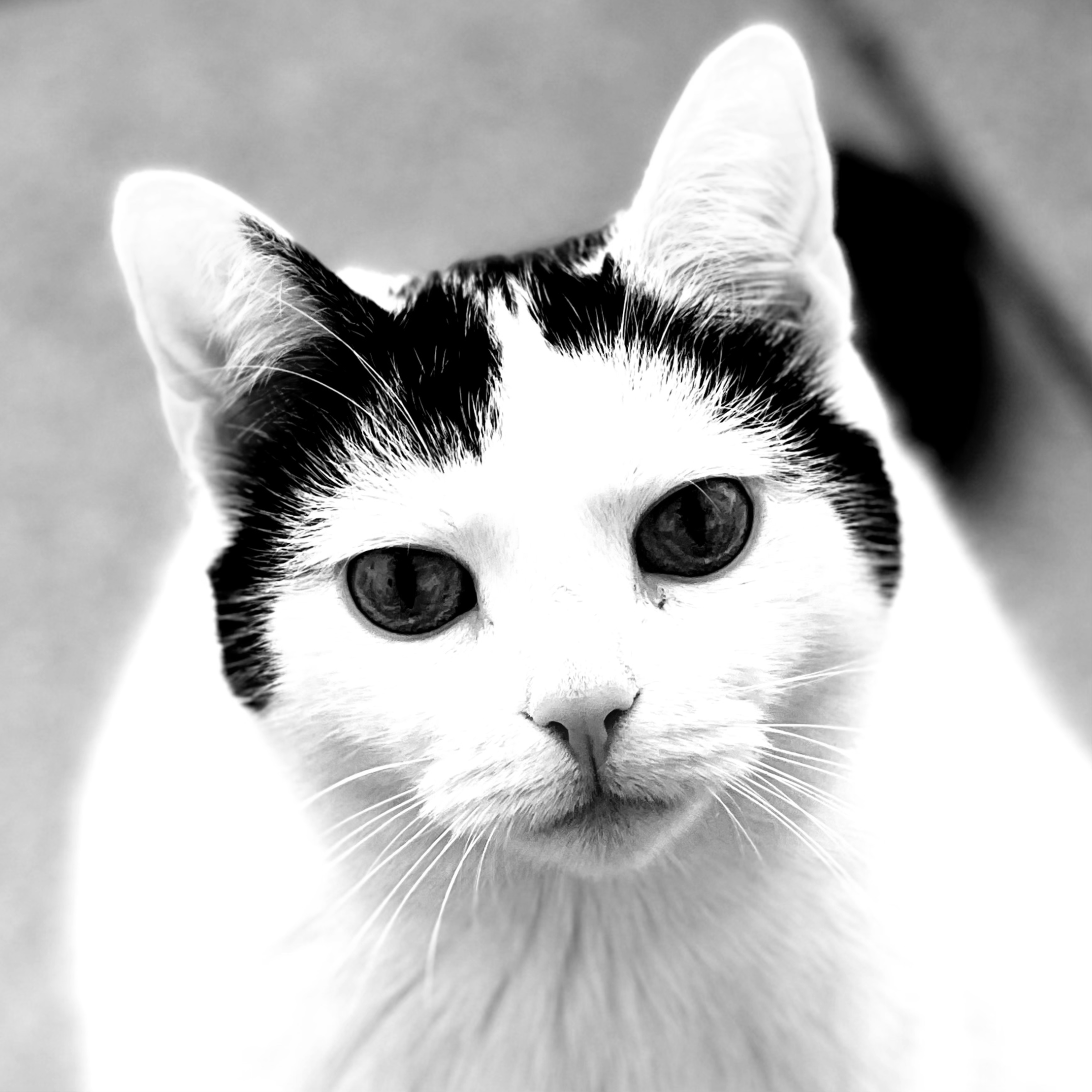}}
				\subfigure[lof][Noisy]{\includegraphics[height=4cm]{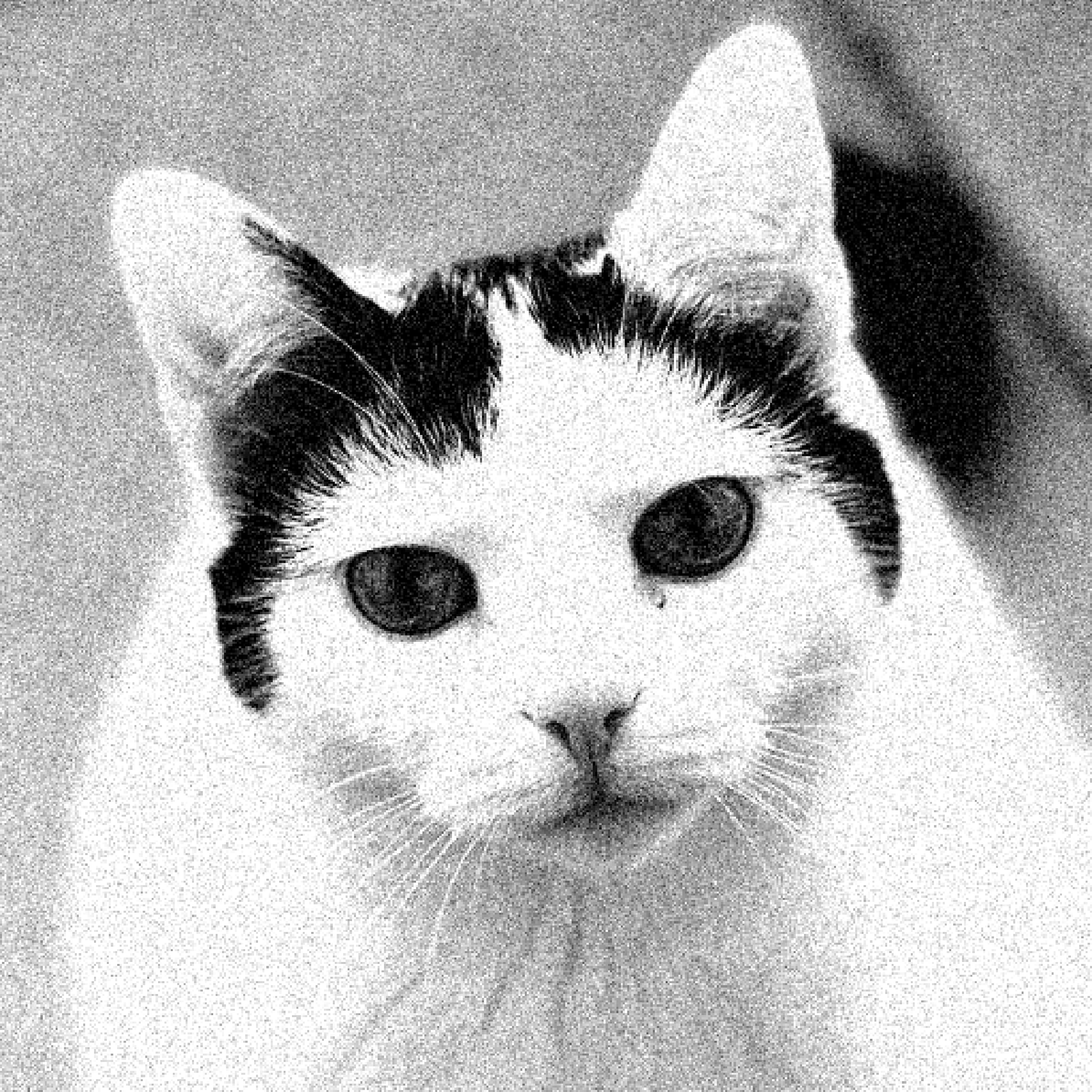}}
			\end{minipage}
			\begin{minipage}[][][b]{0.32\textwidth}
				\centering
				\subfigure[lof][$c = \sqrt{2}$]{\includegraphics[height=4cm]{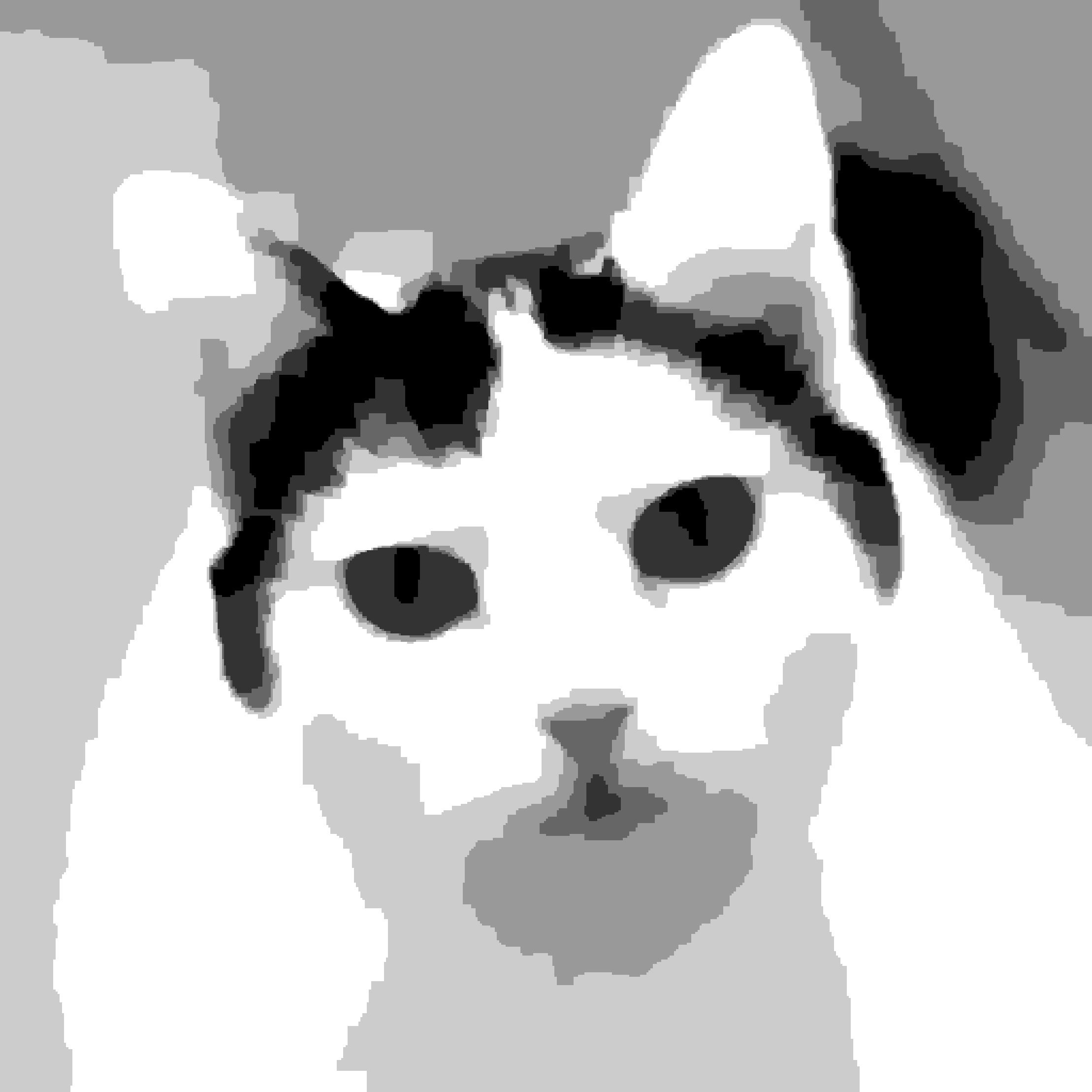}}
				\subfigure[lof][$c = 3 \sqrt{2}$]{\includegraphics[height=4cm]{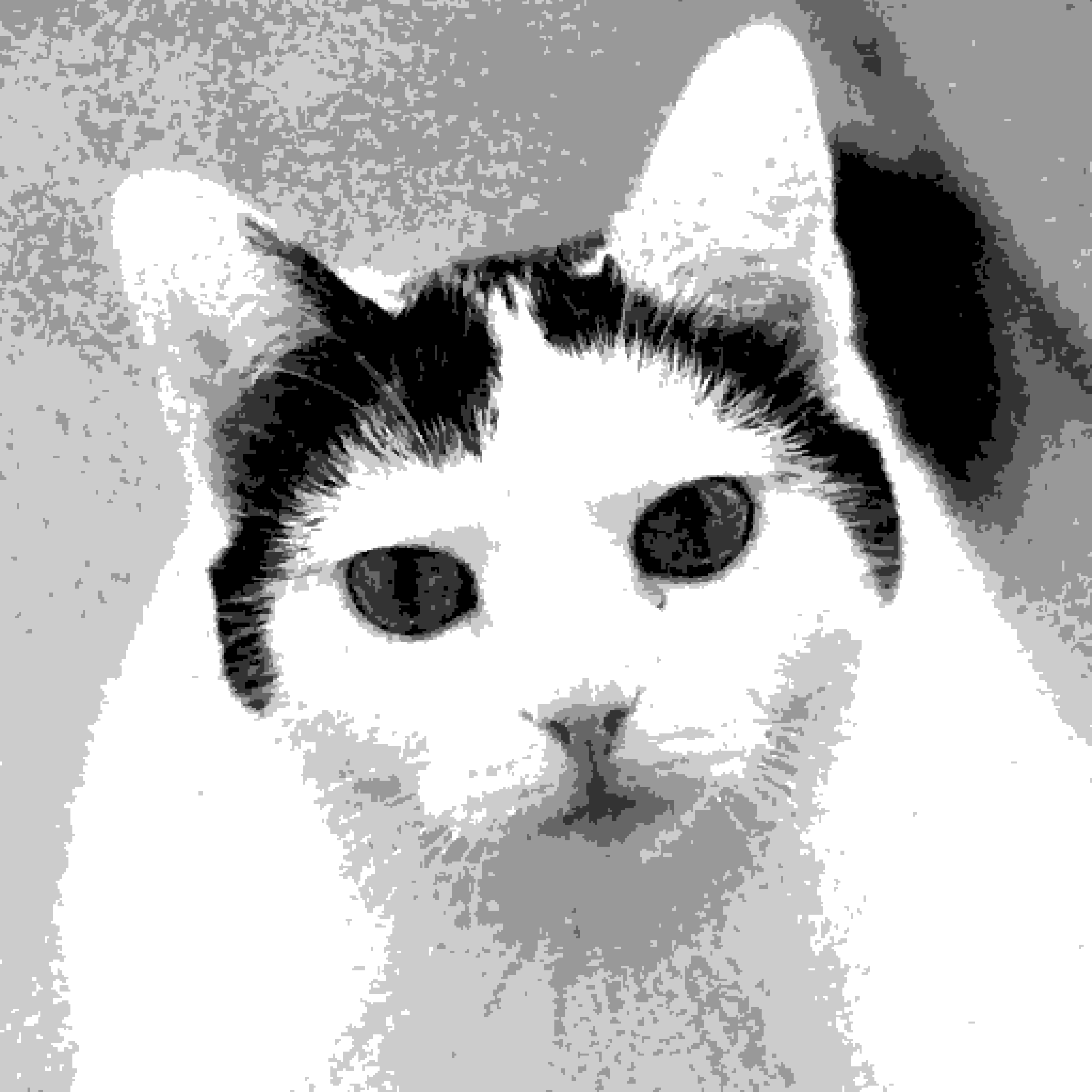}}
			\end{minipage}
			\begin{minipage}[][][b]{0.32\textwidth}
				\centering
				\subfigure[lof][$c = 9  \sqrt{2}$]{\includegraphics[height=4cm]{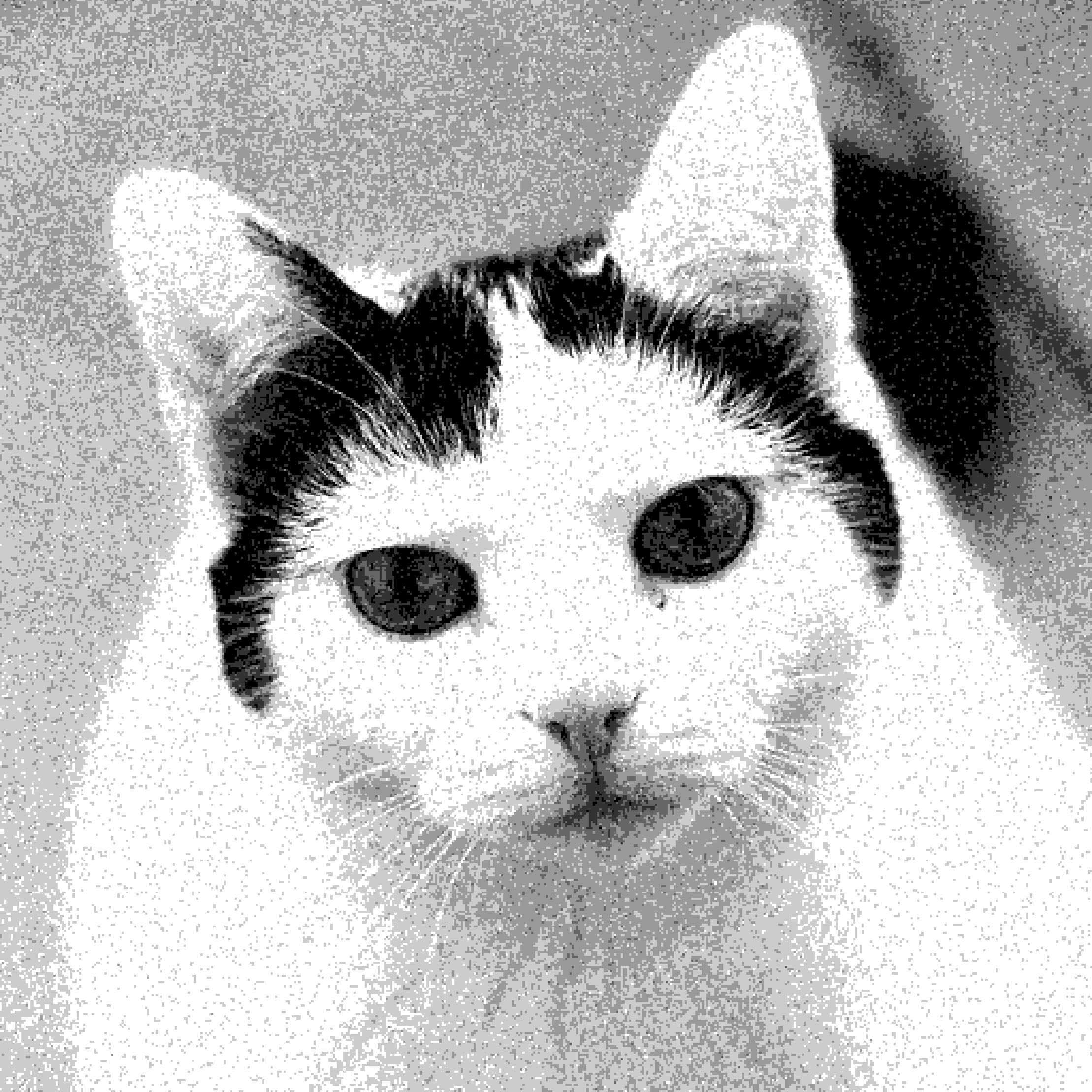}}
				\subfigure[lof][$c = 27 \sqrt{2}$]{\includegraphics[height=4cm]{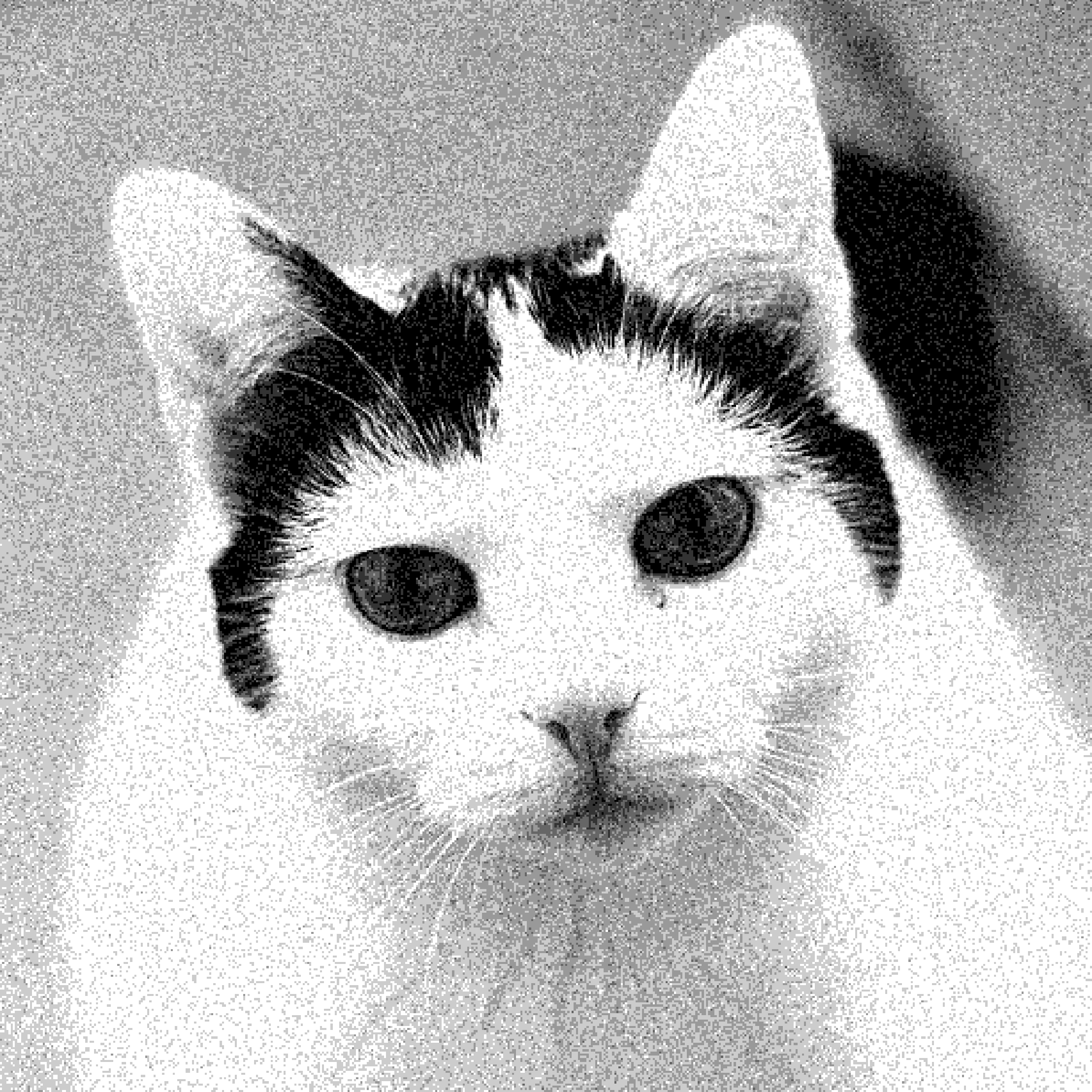}}
			\end{minipage}
		\end{minipage}
		\caption{Original image, noisy image, and resulting images obtained by applying \cref{alg:outer_approx} to the discretization of \eqref{eq:P_I} with $(h,\tau) = (\frac{1}{64},\frac{1}{512})$ and different values $c$.}
		\label{fig:cats}
	\end{figure}
	\begin{table}[t]
		\centering
		\caption{Results from the application of \cref{alg:outer_approx} to \eqref{eq:P_c^h} resulting from \eqref{eq:P_I} with $(h,\tau) = (\frac{1}{32},\frac{1}{128})$.}
		\label{fig:table_cats_32_4}	
		\resizebox{\textwidth}{!}{
			\begin{tabular}{ccccccccc}
				\toprule
				$c$ & Term. & It. & Obj. val. & $\TV$ & $\TVh$ & $V$ & Gap & Time (s) \\
				\midrule
				$\sqrt{2}$ & Opt & 1 & 0.453581 & 34.546875 & 19.985456 & 24.428330 & \num{-2.223e-01} & 28 \\
				$3 \sqrt{2}$ & Tol & 24 & 0.328303 & 89.679688 & 21.143576 & 21.137705 & \num{2.777e-04} & 7613 \\
				$9 \sqrt{2}$ & MaxIter & 25 & 0.297379 & 135.218750 & 18.054645 & 17.843617 & \num{1.169e-02} & 2154 \\
				$27 \sqrt{2}$ & MaxIter & 25 & 0.297393 & 135.132813 & 17.967840 & 17.798740 & \num{9.411e-03} & 1502 \\
				$81 \sqrt{2}$ & MaxIter & 25 & 0.297377 & 135.140625 & 17.973196 & 17.782075 & \num{1.063e-02} & 1786 \\
				\bottomrule
		\end{tabular}}
	\end{table}
	\begin{table}[t]
		\centering	
		\caption{Results from the application of \cref{alg:outer_approx} to \eqref{eq:P_c^h} resulting from \eqref{eq:P_I} with $(h,\tau) = (\frac{1}{64},\frac{1}{512})$.}
		\label{fig:table_cats_64_8}	
		\resizebox{\textwidth}{!}{
			\begin{tabular}{ccccccccc}
				\toprule
				$c$ & Term. & It. & Obj. val. & $\TV$ & $\TVh$ & $V$ & Gap & Time (s) \\
				\midrule
				$\sqrt{2}$ & GrbTime & 9 & 0.498596 & 29.820312 & 21.295309 & 21.086300 & \num{9.815e-03} & 343893   \\
				$3 \sqrt{2}$ & GrbTime & 13 & 0.437718 & 97.527344 & 24.093675 & 22.987415 & \num{4.591e-02} & 384429 \\
				$9 \sqrt{2}$ & MaxIter & 25 & 0.343889 & 301.912109 & 24.076626 & 23.721174 & \num{1.476e-02} & 539392 \\
				$27 \sqrt{2}$ & MaxIter & 25  & 0.307785 & 464.355469 & 21.445615 & 20.954688 & \num{2.289e-02} & 515671 \\
				$81 \sqrt{2}$ & Tol & 5 & 0.312697 & 491.566406 & 22.357736 & 22.357736 & \num{0.000e+00} & 35436 \\
				\bottomrule
		\end{tabular}}
	\end{table}
	\section{Conclusion and outlook} \label{sec:Outlook}
	We have introduced a two-level discretization of infinite-dimensional optimization problems with integrality constraints and total variation regularization, where the dual formulation of the total variation is discretized by means of Raviart--Thomas functions on a coarser mesh and the input function is discretized on an embedded mesh. A superlinear coupling of the mesh sizes has an averaging effect that enables to recover the total variation of an integer-valued function by the discretized total variation of an integer-valued recovery sequence. We added a constraint to the discretized problems that vanishes in the limit and which ensures the compactness of the sequence of minimizers of the discretized problems. Together, this ensures the convergence of these minimizers to a minimizer of the original problem. For the solution of the discretized problems, we introduced an outer approximation algorithm. We provided two numerical examples which confirm our theoretical results. As future work, we want to apply the developed discretization to the trust-region subproblems from \cite{manns2023integer} and solve them using acceleration techniques for integer programs.
	
	In \cref{sec:limsup}, we have argued that choosing the constant $c$ as small as possible suppresses chattering behavior for a fixed grid. This is confirmed in practice by the results for our second numerical example, see again \cref{fig:cats}. We believe that chattering is often undesired in practice and the motivation for using the $\TV$-regularizer in the first place. Since the available compute resources will generally limit the computationally feasible mesh sizes, we believe that choosing $c$ as small as theoretically justifiable is sensible in such situations. Consequently, we also believe that it is worthwhile to extend the proof of the two-dimensional case to the three-dimensional case in future research.
	
	\appendix
	\section{Proof of \Cref{thm:tvh_tv_d=2}} \label{appendix:A}
	We present the arguments that are needed for the proof of \Cref{thm:tvh_tv_d=2} and eventually the detailed proof of \Cref{thm:tvh_tv_d=2}. The first result is due to \cite{braides2017density} and states that each function in $\BVW(\Omega)$ can be approximated by a strictly converging sequence in $\BVW(\Omega)$ of functions with polygonal level sets.
	In line with \cite{braides2017density}, we first define polygonal sets for $d = 2$.
	\begin{definition}
		Let $d=2$. We say that a set $\Sigma \subset \Omega$ is \emph{polygonal} if there is a finite number of
		closed line segments $S_1,\dots,S_n \subset \R^2$ of strictly positive $\Ha^1$-measure
		such that $\Sigma$ coincides, up to $\Ha^1$-null sets,
		with $\bigcup_{j=1}^n S_j \cap \Omega$. We call a point in which at least two line segments intersect
		or a line segment ends a \emph{vertex}.
	\end{definition}
	\begin{lemma}\label{lem:approx_polygonal}
		Let $w \in \BVW(\Omega)$. There is a sequence $\{w^k\}_{k \in \N} \subset \BVW(\Omega)$ of functions with level sets with polygonal boundaries such that $w^k \to w$ in $L^1(\Omega)$ and $\TV(w^k) \to \TV(w)$.
	\end{lemma}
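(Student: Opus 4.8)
The plan is to reduce the statement, via the coarea formula, to a density result for sets of finite perimeter and then to reassemble a $W$-valued function, with the preservation of integrality (equivalently, of the nesting of the superlevel sets) as the one genuinely delicate point. First I would encode $w$ through its superlevel sets. Let $\{v_1 < \dots < v_m\}$ be the finitely many values attained by $w$ and set $E_j \coloneqq \{x \in \Omega : w(x) \geq v_{j+1}\}$ for $j = 1, \dots, m-1$. Since $w \in \BVW(\Omega)$, each $E_j$ is a set of finite perimeter and they are nested, $E_1 \supseteq \dots \supseteq E_{m-1}$. One has $w = v_1 + \sum_{j=1}^{m-1} (v_{j+1} - v_j)\chi_{E_j}$, and the coarea formula for $\BV$ functions, specialised to finitely many values, gives
\[
  \TV(w) = \int_\R \mathrm{Per}(\{w > t\};\Omega)\dd t = \sum_{j=1}^{m-1} (v_{j+1} - v_j)\,\mathrm{Per}(E_j;\Omega).
\]
Thus both the $L^1$-distance and the total variation of a $W$-valued function are encoded in its nested superlevel sets, so it suffices to approximate each $E_j$ by polygonal sets simultaneously in measure and in perimeter.

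Next I would invoke the classical density of polygonal sets in sets of finite perimeter for $d=2$: for each $j$ there is a sequence of polygonal sets $P_j^k$ with $\chi_{P_j^k} \to \chi_{E_j}$ in $L^1(\Omega)$ and $\mathrm{Per}(P_j^k;\Omega) \to \mathrm{Per}(E_j;\Omega)$ as $k \to \infty$. The natural candidate is then $w^k \coloneqq v_1 + \sum_{j=1}^{m-1} (v_{j+1} - v_j)\chi_{P_j^k}$; its jump set is contained in $\bigcup_j \partial P_j^k$ and is therefore a finite union of line segments, i.e.\ polygonal, and the convergence $w^k \to w$ in $L^1(\Omega)$ is immediate from $\chi_{P_j^k} \to \chi_{E_j}$.

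The hard part, and the reason the result is attributed to \cite{braides2017density}, is that independently chosen $P_j^k$ need not be nested, so $w^k$ may attain values outside $W$ and fail to lie in $\BVW(\Omega)$. I would repair this by passing to the monotone rearrangement $\hat P_j^k \coloneqq \bigcap_{i \le j} P_i^k$, which is nested in $j$, is again polygonal (a finite intersection of polygonal sets is polygonal), and still satisfies $\chi_{\hat P_j^k} \to \chi_{E_j}$ in $L^1(\Omega)$ because $\chi_{\hat P_j^k} = \prod_{i \le j} \chi_{P_i^k} \to \prod_{i \le j} \chi_{E_i} = \chi_{E_j}$; replacing $P_j^k$ by $\hat P_j^k$ makes $w^k$ genuinely $W$-valued. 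It then remains to show $\TV(w^k) \to \TV(w)$. The lower bound $\liminf_k \TV(w^k) \ge \TV(w)$ follows from the lower semicontinuity of the perimeter applied term by term, while the matching upper bound is the technical core: here I would use the strong subadditivity (submodularity) of the perimeter, $\mathrm{Per}(A \cup B;\Omega) + \mathrm{Per}(A \cap B;\Omega) \le \mathrm{Per}(A;\Omega) + \mathrm{Per}(B;\Omega)$, together with $\mathrm{Per}(P_j^k;\Omega) \to \mathrm{Per}(E_j;\Omega)$ and the fact that the correction regions $P_i^k \setminus P_j^k$ for $i < j$ have asymptotically negligible measure, to bound $\mathrm{Per}(\hat P_j^k;\Omega)$ from above by $\mathrm{Per}(E_j;\Omega) + o(1)$. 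Combining the two bounds yields $\TV(w^k) \to \TV(w)$ and completes the construction; controlling the perimeter under the nesting repair is precisely the step I expect to require the most care.
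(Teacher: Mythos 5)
Your proposal is correct in substance, but it takes a genuinely different route from the paper: the paper's proof is a two-line citation of Theorem 2.1 and Corollary 2.5 in \cite{braides2017density}, which establish density of \emph{polyhedral partitions} in Caccioppoli partitions, so the nesting/partition structure that you identify as the delicate point is handled wholesale by the cited result. Your argument instead reduces everything to the classical single-set density theorem (polygonal approximation of one set of finite perimeter with convergence of the relative perimeter $\operatorname{Per}(\cdot\,;\Omega)$) and repairs the lost nesting by hand via the monotone intersections $\hat P_j^k = \bigcap_{i\le j} P_i^k$; this works precisely because $W \subset \Z$ is totally ordered, so the superlevel sets form a chain---for genuinely unordered partitions (the setting of \cite{braides2017density}) the intersection trick would not suffice. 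One refinement to your sketch of the key estimate: the upper bound $\limsup_k \operatorname{Per}(\hat P_j^k;\Omega) \le \operatorname{Per}(E_j;\Omega)$ does not come from the smallness in measure of the correction regions (small symmetric difference never controls perimeter by itself), but from induction on $j$ combining submodularity,
\begin{align*}
\operatorname{Per}(\hat P_{j-1}^k \cap P_j^k;\Omega) \le \operatorname{Per}(\hat P_{j-1}^k;\Omega) + \operatorname{Per}(P_j^k;\Omega) - \operatorname{Per}(\hat P_{j-1}^k \cup P_j^k;\Omega),
\end{align*}
with \emph{lower semicontinuity applied to the unions}: since $\hat P_{j-1}^k \cup P_j^k \to E_{j-1}\cup E_j = E_{j-1}$ in measure, $\liminf_k \operatorname{Per}(\hat P_{j-1}^k \cup P_j^k;\Omega) \ge \operatorname{Per}(E_{j-1};\Omega)$, which exactly cancels the inductive limit of $\operatorname{Per}(\hat P_{j-1}^k;\Omega)$ and leaves $\operatorname{Per}(E_j;\Omega)$; combined with lower semicontinuity for $\hat P_j^k \to E_j$ this gives $\operatorname{Per}(\hat P_j^k;\Omega)\to\operatorname{Per}(E_j;\Omega)$, and the coarea formula (valid for the nested family) yields $\TV(w^k)\to\TV(w)$. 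What each approach buys: the paper's citation is shorter and covers the harder partition case, while your argument is self-contained modulo a textbook density theorem, makes the role of the ordering of $W$ transparent, and would let the paper avoid importing the full strength of \cite{braides2017density}.
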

	\begin{proof}
		Follows from Theorem 2.1 and Corollary 2.5 in \cite{braides2017density}.
		Note that $\Omega$ is bounded in our setting and we do not need
		to work with $L^1_{loc}(\Omega)$ here.
	\end{proof}
	For $w \in \BVW(\Omega)$ with jumpsets with polygonal boundary, we are able to bound the limit of $\TV(R^W_{P0^\tau}(w))$ for $\tau \searrow 0$ from above by $\sqrt{2} \TV(w)$.
	\begin{theorem}\label{thm:TV_polygonal}
		Let $d = 2$, $\{\QQ_\tau\}_{ \tau >0}$ partitions of $\Omega$ fulfilling \cref{assu:grid}, and $w \in \BVW(\Omega)$ be such that the level sets of $w$ have polygonal boundaries.
		Then there holds
		\[\limsup_{\tau \searrow 0} \TV(R^W_{P0^\tau}(w)) \leq \sqrt{2} \TV(w).\]
	\end{theorem}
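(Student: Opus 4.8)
The plan is to compute $\TV(\bar w)$ for $\bar w \coloneqq R^W_{P0^\tau}(w)$ directly from its piecewise-constant structure and to charge its jumps to the polygonal jump set of $w$ in a staircase fashion; the factor $\sqrt 2$ will arise from comparing the axis-parallel length of a staircase with the Euclidean length of the segment it approximates. Since $\bar w \in P0^\tau \cap \BVW(\Omega)$, the edge formula of Lemma 2.1 in \cite{manns2023integer} gives $\TV(\bar w) = \tau \sum_{E \in \EE_\tau} |\bar w_{E^+} - \bar w_{E^-}|$, where $\EE_\tau$ denotes the interior edges of $\QQ_\tau$ (all of length $\tau$) and $\bar w_{E^\pm}$ are the values of $\bar w$ on the two cells adjoining $E$. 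First I would record that on every cell $Q$ on which $w$ is constant the operator $R^W_{P0^\tau}$ reproduces that constant, so $\bar w$ can only jump across edges whose adjoining cells meet the jump set $J_w$ of $w$. As $w$ has polygonal level sets, $J_w$ is a finite union of closed segments $S_1,\dots,S_N$ meeting in a finite vertex set, and along the relative interior of each $S_i$ the function $w$ attains exactly the two values $a_i,b_i \in W$ separated by $S_i$.

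The key reduction is that, away from the vertices, $R^W_{P0^\tau}$ acts as a majority rule: on a cell cut by a single segment $S_i$ the cell average equals $a_i p + b_i(1-p)$ with $p$ the area fraction on the $a_i$-side, so the nearest attained value is $a_i$ iff $p>1/2$. Hence the jumps of $\bar w$ produced along $S_i$ all have magnitude $|\sigma_i| \coloneqq |a_i - b_i|$ and occur precisely on the staircase boundary between the majority-$a_i$ and the majority-$b_i$ cells. I would then split $\EE_\tau$ into those edges at least one of whose adjoining cells meets two or more of the $S_i$ and the remaining edges. A cell of the first type necessarily lies within the cell diameter $\sqrt 2\,\tau$ of a vertex; since there are finitely many vertices and $O(1)$ cells within $O(\tau)$ of each, this group contributes at most $O(\tau)\,W_{\max}$ to $\TV(\bar w)$, with $W_{\max} \coloneqq \max_{w_1,w_2\in W}|w_1-w_2|$, which vanishes as $\tau \searrow 0$.

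It remains to bound, for each segment $S_i$ of horizontal and vertical extents $\Delta x_i,\Delta y_i$, the total length of the active staircase edges. Because $S_i$ is a single straight segment, the area fraction on the $a_i$-side varies monotonically as one moves across a fixed row or column of cells, so the majority value switches at most once per row and at most once per column. This yields at most one active vertical edge in each of the $\Delta y_i/\tau + O(1)$ rows met by $S_i$ and at most one active horizontal edge in each of the $\Delta x_i/\tau + O(1)$ columns met by $S_i$. Multiplying by the edge length $\tau$ and the jump magnitude $|\sigma_i|$ gives a contribution of at most $|\sigma_i|(\Delta x_i + \Delta y_i) + O(\tau)$, and the elementary inequality $\Delta x_i + \Delta y_i \le \sqrt 2\,\sqrt{\Delta x_i^2 + \Delta y_i^2} = \sqrt 2\,\Ha^1(S_i)$ produces the factor $\sqrt 2$.

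Summing over the finitely many segments and adding the vanishing vertex contribution, I obtain $\TV(\bar w) \le \sqrt 2 \sum_{i=1}^N |\sigma_i|\,\Ha^1(S_i) + O(\tau) = \sqrt 2\,\TV(w) + O(\tau)$, where I use $\TV(w) = \sum_i |\sigma_i|\,\Ha^1(S_i)$ for an integer-valued function with polygonal level sets. Taking $\limsup_{\tau\searrow 0}$ removes the $O(\tau)$ term and yields the claim. I expect the main obstacle to be making the staircase count rigorous — in particular justifying the ``at most one switch per row and column'' statement and verifying that the interactions of several segments and of $\partial\Omega$ near vertices and segment endpoints are genuinely $O(\tau)$ with constants independent of $\tau$; the geometry driving the $\sqrt 2$ is, by contrast, the elementary Cauchy--Schwarz estimate above.
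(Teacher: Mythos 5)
Your overall strategy is the same as the paper's: the paper's proof also rests on the majority/center rule for $R^W_{P0^\tau}$ on cells cut by a single segment, bounds the resulting staircase interface along each segment $S$ by its axis-parallel extent plus $O(\tau)$ (the paper writes $(\|u-v\|_1 + 4\tau)\,|w_1-w_2|$, justified by monotonicity of the segment, which is exactly your one-switch-per-row/column count), converts $\ell^1$ to Euclidean length via $\Delta x + \Delta y \le \sqrt{2}\,\Ha^1(S)$, and absorbs everything near the vertices into a vanishing $O(\tau)$ term. Your row/column switch-counting is a clean formalization of the staircase bound.

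There is, however, one step that is false as stated, precisely at the place you flag as the main obstacle. A cell meeting two or more segments need \emph{not} lie within $\sqrt{2}\,\tau$ of a vertex: if two segments meet at a vertex $v$ with angle $\alpha$, their separation at distance $D$ from $v$ is $2D\sin(\alpha/2)$, so every cell up to distance roughly $\sqrt{2}\,\tau/(2\sin(\alpha/2))$ from $v$ can meet both segments, and this exceeds $\sqrt{2}\,\tau$ by an arbitrarily large factor as $\alpha$ becomes small. (For fixed $\tau$, two disjoint non-adjacent segments at distance below $\sqrt{2}\,\tau$ cause the same problem far from any vertex; this disappears only once $\tau$ is below their minimal separation.) A related omission: your dichotomy does not cover ``interaction'' edges between a cell meeting only $S_i$ and a neighboring cell meeting only $S_j$ with $i \neq j$; across such an edge the jump need not equal $|\sigma_i|$ or $|\sigma_j|$, and it is not produced by your per-segment switch count. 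Both defects are repaired by the device the paper uses: surround each vertex by a ball of radius $\eps = \bigl(1 + \tfrac{1}{s}\bigr)\sqrt{2}\,\tau$ with $s = 2\sin(\alpha_{\min}/2)$, where $\alpha_{\min}$ is the minimum angle between segments meeting at a vertex, take $\tau$ small enough relative to the separation of non-adjacent segments, and note that the number of cells meeting each ball is bounded by a constant depending only on $\alpha_{\min}$ (not on $\tau$). All problematic cells and edges then lie in these neighborhoods, their total contribution is still $O(\tau)\,W_{\max}$ with a $w$-dependent constant, and your argument, so amended, coincides with the paper's proof.
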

	\begin{figure}[h]
		\centering
		\resizebox{\textwidth}{!}{
			\begin{tikzpicture}
				\def\n{24} 
				\def\m{11} 
				\def\c{1}
				\def\d{55}
				\def\e{0.05}
				\def\f{0.071}
				\def\g{0.95}
				\def\h{0.929}
				\def\i{4.2}
				%
				\draw[step=0.5,gray!30,line width=1pt] (-1,-1) grid (\n,\m);
				%
				\draw[->, line width=1pt] (-0.5, 0) -- (\n-1.5,0) node[right] {$x_1$};
				\draw[->, line width=1pt] (0,-0.5) -- (0,\m-1.5) node[above] {$x_2$};
				\foreach \x in {1,2,...,22}
				\draw[line width=1pt] (\x,-0.1) -- (\x,0.1) node[below] {$\x$};
				\foreach \y in {1,2,...,9}
				\draw[line width=1pt] (-0.1,\y) -- (0.1,\y) node[left] {$\y$};
				\draw[black,line width=1pt,dash pattern=on 5mm off 2mm] (1,1) rectangle (\n-2,\m-2);
				\node[inner sep=\c pt, circle, fill] (1) at (1,6) {};
				\node[inner sep=\c pt, circle, fill] (2) at (5.5,1.5) {};
				\node[inner sep=\c pt, circle, fill] (3) at (18.5,3.5){};
				\node[inner sep=\c pt, circle, fill] (4) at (11.5,9){};
				\node[inner sep=\c pt, circle, fill] (5) at (22,7.5) {};
				\node at (12.5,4.5) {$w_1$};
				\node at (15,2) {$w_2$};
				\node at (18.5,7) {$w_3$};
				\node at (5.5,5.5) {$w_4$};
				\node at (5,7.5) {\Huge $\Omega$};			
				\draw (1) circle (\d pt);
				\draw (2) circle (\d pt);
				\draw (3) circle (\d pt);
				\draw (4) circle (\d pt);
				\draw (5) circle (\d pt);
				\draw[very thick] (1) -- (2);
				\draw[very thick] (2) -- (3);
				\draw[very thick] (3) -- (4);
				\draw[very thick] (4) -- (2);
				\draw[very thick] (3) -- (5);
				\draw[dotted, line width=1.5pt] (7,2) -- (10.5,2)
				-- (10.5,2.5)
				-- (13.5,2.5)
				-- (13.5,3);
				\draw[dotted, line width=1.5pt] (13.5,3) -- (17,3);
				\draw[dotted, line width=1.5pt] (6.5,3) -- (7,3)
				-- (7,3.5)
				-- (7.5,3.5)
				-- (7.5,4.5)
				-- (8,4.5)
				-- (8,5)
				-- (8.5,5)
				--(8.5,5.5)
				-- (9,5.5)
				-- (9,6)
				-- (9.5,6)
				-- (9.5,7)
				-- (10,7)
				-- (10,7.5)
				-- (10.5,7.5);
				\draw[dotted, line width=1.5pt] (13,8) -- (13,7.5)
				-- (13.5,7.5)
				-- (13.5,7)
				-- (14.5,7)
				-- (14.5,6.5)
				-- (15,6.5)
				-- (15,6)
				-- (15.5,6)
				-- (15.5,5.5)
				-- (16.5,5.5)
				-- (16.5,5)
				-- (17,5)
				-- (17,4.5);
				\draw[dotted, line width=1.5pt] (2,4.5) -- (2.5,4.5)
				-- (2.5,4)
				-- (3,4)
				-- (3,3.5)
				-- (3.5,3.5)
				-- (3.5,3)
				-- (4,3)
				-- (4,2.5);
				\draw[dotted, line width=1.5pt] (19.5,5) -- (20,5)
				-- (20,5.5)
				-- (20.5,5.5)
				-- (20.5,6)
				-- (21,6);
		\end{tikzpicture}}
		\caption{Example for a function $w \in \BVW(\Omega)$ with level sets that have polygonal boundaries with the notations from the proof of \cref{thm:TV_polygonal}.
		}\label{fig:polygonal_sketch}
	\end{figure}
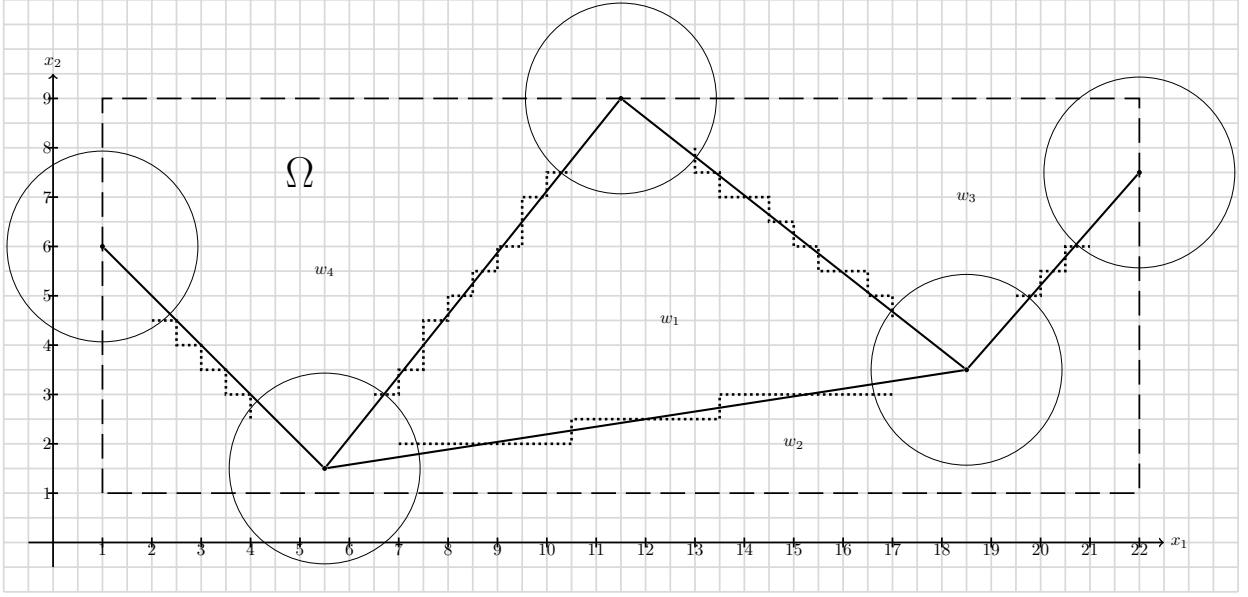
	\begin{proof}
		Let $w \in \BVW(\Omega)$ have level sets with polygonal boundaries consisting of $n \in \N$ line segments.
		We denote the set of these line segments by $\Scal$ and the associated set of vertices by $V$.
		We may assume without loss of generality that the line segments in $\Scal$ do not contain vertices in their relative
		interior, i.e., one line segment connects exactly two adjacent vertices. This yields that the number of vertices is
		also bounded by twice the number of line segments $n$. Moreover, a line segment $S \in \Scal$ separates exactly two level sets inside $\Omega$ and we may restrict the line segments to
		$\bar{\Omega}$.
		
		We make the following preliminary considerations: consider a ball $B_\eps(v)$ with radius $\eps > 0$ centered at a vertex $v \in V$. Denote by $\alpha \in (0,\pi]$ the angle between two segments $S_1, S_2 \in \Scal$ with $S_1 \neq S_2$ meeting in $v$. The distance $\delta \geq 0$ between the two points $\partial B_{\eps}(v) \cap S_1$ and $\partial B_{\eps}(v) \cap S_2$ is then given by $\delta = \eps s$, where we define $s \coloneqq 2 \sin\left(\frac{\alpha}{2}\right)$. Let the mesh size $\tau >0$ be small enough such that there hold $\partial B_\eps(v) \cap S_1 \neq \emptyset$ and $\partial B_\eps(v) \cap S_2 \neq \emptyset$ with $\eps \coloneqq \left( 1+ \frac{1}{s} \right) \sqrt{2} \tau$. Then there holds $\delta > \sqrt{2} \tau$, that is $\partial B_\eps(v) \cap S_1$ and $\partial B_\eps(v) \cap S_2$ do not lie in the same cube $Q \in \QQ_\tau$.
		The number of cubes of a given mesh $\QQ_\tau$ that intersect the ball $B_\eps(v)$ can be bounded from above by $\frac{4 (\eps+ \tau)^2}{\tau^2}$ so that we obtain for our choice of $\eps$ the upper bound $b \coloneqq 8\left( \frac{1}{\sqrt{2}} +1 + \frac{1}{s} \right)^2$ which is constant and independent of $\tau$ and $\eps$.
		
		Now let $\alpha \in (0,\pi]$ denote the minimum angle between two segments meeting in a vertex $v \in V$ and define $\eps$ as above. We consider the line segments $\Scal^\eps \coloneqq \left\{ S \setminus \cup_{v \in V} B_\eps(v) \mid S \in \Scal \right\}$ outside the balls $B_\eps(v)$. The number of segments in $\Scal^\eps$ is then for $\tau$ small enough equal to $n$. Moreover, by the choice of $\eps$, there is no cube $Q \in \QQ_\tau$ that contains more than one line segment from $\Scal^\eps$.
		
		We make the following observations. The reduced boundary of the 
		level sets of $R^W_{P0^\tau}(w)$ is by \Cref{dfn:RWP0tau} a subset of the boundaries of the cells of $\QQ_\tau$.
		Because the total variation of a $W$-valued function is the sum of the interface lengths weighted by their respective jump heights, we can split $\TV(R^W_{P0^\tau}(w))$ into a contribution 
		along segments from $\Scal^\eps$ and a contribution 
		inside the balls $B_\eps(v)$.
		
		We estimate the contribution inside the balls conservatively by multiplying the number of cubes inside the balls with their perimeter, that is in total by $4bn\tau$.
		
		Now let a segment $S = \conv\{u,v\} \in \Scal^\eps$ with $u,v \in \Omega$ be given. Then we can bound the contribution to $\TV(R^W_{P0^\tau}(w))$ along $S$ from above by $(\| u-v \|_{1} + 4 \tau) |w_1 -w_2| \leq (\sqrt{2} \| u-v \|_2 + 4 \tau)|w_1 - w_2|$, where $w_1,w_2 \in W$ denote the two values of $w$ within the two level sets separated by $S$. The estimate holds because the value of $R^W_{P0^\tau}(w)$ on a cube along $S$ is determined solely by the value of $w$ in the center point of the cube and by the monotonicity of the line segment $S$. Since the total variation of $w$ along $S$ is given by $|w_1 -w_2| \| u - v \|_2$, we conclude $\TV(R^W_{P0^\tau}(w)) \leq \sqrt{2} \TV(w) + 4 n (W_{\max} + b)\tau$ with $W_{\max} \coloneqq \max_{w_1,w_2 \in W} |w_1-w_2|$ so that $\limsup_{\tau \searrow 0} \TV(R^W_{P0^\tau}(w)) \leq \sqrt{2} \TV(w)$.
	\end{proof}
	Finally, we provide the proof of \cref{thm:tvh_tv_d=2}.
	\begin{proof}[Proof of \Cref{thm:tvh_tv_d=2}]
		By \cref{lem:approx_polygonal}, there is a sequence $\{ v^j \}_{j \in \N} \subset \BVW(\Omega)$ of functions with jump sets with polygonal boundaries such that $v^j \to w$ in $L^1(\Omega)$ and $\TV(v^j) \to \TV(w)$ as $j \to \infty$. We pick a subsequence $\{ v^{j_m} \}_{m \in \N}$ such that we have $\| v^{j_m} - w \|_{L^1(\Omega)} \leq \frac{1}{m}$ and $\TV(v^{j_m}) \leq \TV(w) + \frac{1}{m}$.
		By \cref{lem:estimate_RWP0h}, there holds $R^W_{P0^{\tau_h}}(v^{j_m}) \to v^{j_m} \text{ as } h \searrow 0 \text{ in } L^1(\Omega)$.
		Since the functions $\{v^{j_m}\}_{m \in \N}$ have jump sets with polygonal boundaries, we may apply \cref{thm:TV_polygonal} such that there holds $\limsup_{h \searrow 0} \TV(R^W_{P0^{\tau_h}} (v^{j_m})) \leq \sqrt{2} \TV(v^{j_m})$.
		This yields that for each $m \in \N$ there is some $h_m  >0$ such that
		$\| R^W_{P0^{\tau_h}} (v^{j_m}) - v^{j_m} \|_{L^1(\Omega)} \leq \frac{1}{m}$ and
		$\TV(R^W_{P0^{\tau_h}}(v^{j_m})) \leq \sqrt{2} \TV(v^{j_m}) + \frac{1}{m}$ for all $h \leq h_m$.
		We additionally choose the sequence $\{h_m\}_{m \in \N}$ such that it decreases strictly monotonically.
		We define $\{\tilde{w}^h\}_{h >0}$ by $\tilde{w}^h \coloneqq v^{j_m}$ for $h \in (h_{m+1},h_m]$
		and $w^h \coloneqq R^W_{P0^{\tau_h}} (\tilde{w}^h)$ for $h >0$. There holds for $h \in (h_{m+1},h_m]$
		that
		\begin{align*}
			\| w^h - w \|_{L^1(\Omega)} & = \| R^W_{P0^{\tau_h}} (\tilde{w}^h) - w \|_{L^1(\Omega)} = \| R^W_{P0^{\tau_h}} (v^{j_m}) - w \|_{L^1(\Omega)} \\
			& \leq \| R^W_{P0^{\tau_h}} (v^{j_m}) - v^{j_m} \|_{L^1(\Omega)} + \| v^{j_m} - w \|_{L^1(\Omega)} \leq \frac{2}{m},
		\end{align*}
		and
		\begin{align*}
			\TV(w^h) & = \TV(R^W_{P0^{\tau_h}} (\tilde{w}^h)) = \TV(R^W_{P0^{\tau_h}} (v^{j_m})) \\
			& \leq \sqrt{2} \TV(v^{j_m}) + \frac{1}{m} \leq \sqrt{2} \TV(w) + \frac{1+ \sqrt{2}}{m}.
		\end{align*}
		That is, for each $m \in \N$ there hold $\| w^h - w \|_{L^1(\Omega)} \leq \frac{2}{m}$ and $\TV(w^h) \leq \sqrt{2} \TV(w) + \frac{1 + \sqrt{2}}{m}$ for all $h \leq h_m$, which yields $w^h \to w$ in $L^1(\Omega)$ as $h \searrow 0$ and $\limsup_{h \searrow 0} \TV(w^h)$ $\leq \sqrt{2}\TV(w)$.
		In particular, the sequence $\{w^h\}_{h >0}$ is bounded in $\BV(\Omega)$, which together with the convergence in $L^1(\Omega)$ yields that $w^h \weakstarto w$ as $h \searrow 0$ in $\BV(\Omega)$.
		
		Moreover, by \Cref{lem:estimate_TVhRP0hTV}, for each $m \in \N$ there holds for all $h \in (h_{m+1},h_m]$
		\begin{align*}
			\TVh (w^h) & = \TVh(R^W_{P0^{\tau_h}}(v^{j_m})) \\
			& \leq \left(1+ \frac{4 \sqrt{2} \tau_h}{h}\right) \TV(v^{j_m}) \leq \left(1+ \frac{4 \sqrt{2} \tau_h}{h}\right) \left(\TV(w)+\frac{1}{m}\right),
		\end{align*}
		such that $\limsup_{h \searrow 0} \TVh (w^h) \leq \TV(w)$
		due to $\frac{\tau_h}{h} \searrow 0$ as $h \searrow 0$.
	\end{proof}
	
	\section*{Acknowledgments}
	The authors gratefully acknowledge computing time on
	the LiDO3 HPC cluster at TU Dortmund,
	partially funded in the Large-Scale Equipment
	Initiative by the Deutsche Forschungsgemeinschaft (DFG) as project 271512359.
	The authors thank two anonymous referees for providing helpful feedback on the article.
	\bibliographystyle{plain}
	\bibliography{biblio}
\end{document}